\pdfoutput=1
\documentclass{article}

\PassOptionsToPackage{numbers,sort&compress}{natbib}

  \usepackage[preprint]{neurips_2026}
    \author{Damien Scieur \\
  Samsung SAIL Montreal\\
  \texttt{damien.scieur@gmail.com}}

\usepackage[utf8]{inputenc} %

\usepackage[T1]{fontenc}    %
\usepackage{url}            %
\usepackage{booktabs}       %
\usepackage{amsfonts}       %
\usepackage{nicefrac}       %
\usepackage{microtype}      %
\usepackage{xcolor}         %

\usepackage{microtype}      %
\usepackage{appendix}
\usepackage{amssymb} 

\usepackage[ruled,vlined]{algorithm2e}
\SetKwInput{KwInit}{Initialization}
\LinesNumbered

\usepackage{amsmath}

\usepackage{amsthm}
\usepackage{nccmath}
\usepackage{empheq}
\usepackage{color,colortbl}
\usepackage{enumitem}
\usepackage{graphicx} %
\usepackage{empheq}

\usepackage{nicefrac}
\usepackage{booktabs}
\usepackage{multirow}
\usepackage{rotating}

\definecolor{mydarkblue}{rgb}{0,0.08,0.45}
\usepackage[colorlinks=true,
    linkcolor=mydarkblue,
    citecolor=mydarkblue,
    filecolor=mydarkblue,
    urlcolor=mydarkblue,
    pdfview=FitH]{hyperref}

\DeclareMathOperator*{\dom}{dom}

\DeclareMathOperator*{\argmin}{{arg\,min}}

\definecolor{myblue}{HTML}{D2E4FC}
\definecolor{Gray}{gray}{0.92}

\definecolor{certdual}{HTML}{2B6CB0}
\definecolor{certgrad}{HTML}{2F855A}
\definecolor{certrem}{HTML}{C05621}
\definecolor{primalstep}{HTML}{1F4E79}
\definecolor{dualstep}{HTML}{B8860B}
\newcommand{\certbox}[2]{{\setlength{\fboxsep}{2pt}\fcolorbox{#1}{#1!10}{$\displaystyle #2$}}}

\newtheorem{theorem}{Theorem}
\newtheorem{assumption}{Assumption}
\newtheorem{proposition}{Proposition}
\newtheorem{lemma}{Lemma}

\newtheorem{corollary}{Corollary}
\newtheorem{definition}{Definition}

\definecolor{burnin}{HTML}{bfe4bf}
\definecolor{linearphase}{HTML}{fedfc2}

\newtheorem{innercustomgeneric}{\customgenericname}
\providecommand{\customgenericname}{}
\newcommand{\newcustomtheorem}[2]{%
  \newenvironment{#1}[1]
  {%
   \renewcommand\customgenericname{#2}%
   \renewcommand\theinnercustomgeneric{##1}%
   \innercustomgeneric
  }
  {\endinnercustomgeneric}
}

\newcustomtheorem{requirement}{Requirement}

\usepackage{cleveref}
\crefname{assumption}{Assumption}{Assumptions}
\crefname{innercustomgeneric}{Requirement}{Requirements}
\crefname{theorem}{Theorem}{Theorems}
\crefname{algocfline}{line}{lines}
\Crefname{algocfline}{Line}{Lines}

\crefformat{assumption}{#2As.~#1#3}
\Crefformat{assumption}{#2As.~#1#3}

\usepackage{soul}

\usepackage{thm-restate}

\crefformat{equation}{(#2#1#3)}

\title{ABrA-GD: Adaptive Bregman Accelerated Gradient Descent for Relatively Smooth and (Strongly-)Convex Optimization}

\begin{document}

\maketitle

\begin{abstract}
We propose ABrA-GD (Adaptive Bregman Accelerated Gradient Descent), an adaptive algorithm for relatively smooth convex optimization. The algorithm is derived from a computable primal--dual certificate that guarantees progress by comparing the objective value with a lower bound on the minimum of a regularized objective. This certificate enables adaptation to both smoothness and geometry, using only the relative strong convexity constant as a problem-dependent input. We introduce the dual Bregman Length Distortion Factor (BLDF), which measures how dual Bregman lengths change under an anchor shift or rescaling. Under bounded local dual BLDF, ABrA-GD achieves an accelerated $O(1/k^2)$ rate for convex objectives and an accelerated linear rate for relatively strongly convex objectives.
\end{abstract}

\section{Setting and Introduction}
\label{sec:intro}

We consider optimization problems of the form
\begin{equation}\label{eq:phi_objective_boxed}
\phi_\star
=
\min_{x\in\mathcal X}\phi(x)
\;\coloneqq\;
f(x)+\mu D_d(x,x_0), \quad D_d(x,y):=d(x)-d(y)-\langle \nabla d(y),x-y\rangle.
\end{equation}
The term $D_d(x,y)$ is the Bregman divergence generated by \(d\)
\citep{bregman1967relaxation,censor1981iterative,censor1992proximal}. Throughout the paper we work
under the following hypotheses.

\begin{assumption}[Problem setting]\label{ass:setting}
\(d\) is a Legendre function, \(\mathcal X=\operatorname{cl}(\dom d)\), and
\(x_0\in\operatorname{int}(\dom d)\). The function \(f:\mathcal X\to\mathbb R\) is
convex, differentiable, and relatively \((L-\mu)\)-smooth with respect to \(d\)
\citep{bauschke2017descent,lu2018relatively}, with \(0\le\mu<L\).
\end{assumption}

Assumption~\ref{ass:setting} is equivalent to requiring \(\phi\) to be \(L\)-smooth and
\(\mu\)-strongly convex relative to \(d\) (\cref{app:defs,app:anchored_decomposition}). The domain of \(d\) determines the feasible set. The analysis uses interior minimizers, for which
the optimality conditions are stationarity equations. Affine equality constraints are also allowed: after a change of
coordinates, the problem satisfies the same assumptions
(\cref{app:affine_restriction}).

\paragraph{Acceleration in Euclidean optimization.}
For smooth convex objectives, accelerated gradient methods improve
the \(O(1/k)\) rate of gradient descent to \(O(1/k^2)\)
\citep{nesterov2004introductory}. Several viewpoints explain this
improvement, including estimate sequences, Lyapunov functions,
and the coupling of gradient and mirror-descent steps
\citep{wilson2021lyapunov,allen2014linear}.
Primal--dual interpretations express progress through a certificate
that combines an objective upper bound with a lower model
\citep{diakonikolas2019approximate}.

\paragraph{Beyond Euclidean smoothness.}
Relative smoothness replaces the Euclidean quadratic upper model
with a Bregman divergence
\citep{bauschke2017descent,lu2018relatively}.
This allows objectives whose gradients are not globally Lipschitz,
including log-determinant, entropy, and Poisson-type objectives
with singular behavior near the domain boundary.
Bregman gradient descent attains an \(O(1/k)\) rate in this setting.
Unlike the Euclidean case, this rate cannot be improved over the
full relatively smooth class \citep{dragomir2022optimal}.
Acceleration therefore requires additional structure.

\paragraph{Conditions for Bregman acceleration.}
Accelerated Bregman Proximal Gradient (ABPG) methods obtain
\(O(k^{-\gamma})\) rates under a Triangle Scaling Property
(TSP) with exponent \(\gamma\) and gain \(1\)
\citep{hanzely2021accelerated,savchuk2024accelerated}.
For standard geometries such as entropy and Burg, this exponent is at most \(1\), so
\emph{this guarantee gives no acceleration}.
The accelerated variants instead fix \(\gamma=2\) and backtrack on the gain, so their rate
depends on the gains accepted during the run.
\citet{chen2026accelerated} obtain an accelerated linear rate under a
generalized Cauchy--Schwarz condition, whose constant is an input of
their method.
\citet{liu2022dual} additionally assume relative strong convexity,
but their rate does not improve under this assumption.

\subsection*{Contributions}

\begin{enumerate}
    \item \textbf{A primal--dual formulation for acceleration
    (\cref{sec:envelope}).}
    We formulate a descent requirement using the Bregman--Moreau
    envelope and certify it through its dual problem. This framework
    provides a common starting point for the algorithm and its analysis;
    \cref{app:envelope_special_cases} recovers Bregman gradient descent and
    dual averaging from it.

    \item \textbf{ABrA-GD
    (\cref{sec:algorithm}).}
    We derive ABrA-GD (\cref{algo:bregman_adaptive}), in which one gradient serves two steps:
    A Bregman gradient step gives an upper bound on the objective, and a
    dual-averaging step gives a lower bound on the minimum of a regularized objective.
    ABrA-GD backtracks on smoothness and acceleration parameters, and we
    establish a generic convergence bound in terms of the accepted acceleration
    parameters (\cref{thm:generic_accelerated_rate_Mk}).

    \item \textbf{A local measure of geometric distortion
    (\cref{sec:bldf}).}
    We introduce the dual Bregman Length Distortion Factor (BLDF)
    to compare the progress of gradient and dual-averaging steps. We give explicit
    formulas for standard separable kernels and relate this local
    condition to the triangle-scaling conditions.
    The algorithm adapts without evaluating the BLDF.

    \item \textbf{Accelerated rates under bounded distortion
    (\cref{sec:accelerated-rate}).}
    Under bounded local distortion, \textit{a condition implied by a uniform
    triangle-scaling gain (\cref{app:tsp})}, we obtain an accelerated \(O(1/k^2)\)
    rate for convex objectives and an accelerated linear rate for
    relatively strongly convex objectives. ABrA-GD's only problem-constant
    input is \(\mu\). To our knowledge, it is the
    first method with these inputs that has an accelerated linear rate
    for this class.
\end{enumerate}

\paragraph{Limitations.}
The accelerated analysis does not cover an additional nonsmooth term, unlike the ABPG analyses of
\citet{hanzely2021accelerated,savchuk2024accelerated}. 
The envelope descent requirement extends to composite objectives
(\cref{app:composite_subgradient}), but an accelerated BLDF analysis remains open.
\section{The Primal--Dual Envelope Framework}
\label{sec:envelope}

For \(\eta>0\), define the regularized objective
\begin{equation} \label{eq:enveloppe}
\phi_\star^{\eta} := \min_x \phi^\eta(x),\;\; \phi^\eta(x) :
=
\phi(x)+\frac{1}{\eta}D_d(x,x_0)
=
f(x)+\left(\mu+\tfrac{1}{\eta}\right)D_d(x,x_0),
\qquad \eta>0.
\end{equation}
The value \(\phi^\eta_\star\) is the Bregman--Moreau envelope of \(\phi\) anchored at \(x_0\) \citep{bauschke2018regularizing}, which approaches \(\phi_\star\)
as \(\eta\to\infty\). Consider the implicit envelope descent scheme (\cref{algo:implicit_descent}).

\begin{algorithm}[H]
\caption{Implicit envelope descent (conceptual algorithm, used only as convergence tool)}\label{algo:implicit_descent}
\KwIn{$\phi, x_0$. \textbf{Initialization:} $k\leftarrow 0$, $\eta_0 \leftarrow 0$.}
\For{$k = 0,1,2,\dots$}{
    Choose $\eta_{k+1}\ge\eta_k$ and find $x_{k+1}$ such that $\phi(x_{k+1})\le\phi_\star^{\eta_{k+1}}$ (defined in \cref{eq:enveloppe}).
}
\end{algorithm}

Each iterate must satisfy \(\phi(x_{k+1})\le\phi_\star^{\eta_{k+1}}\), but it need not minimize the
regularized objective (\cref{fig:implicit_envelope}). The scheme does not specify how to compute
\(x_{k+1}\) or verify this requirement, since the envelope value is itself defined by a
minimization. It does, however, give an immediate error bound.

\begin{theorem}\label{thm:implicit_descent_rate}
    Under \cref{ass:setting}, \cref{algo:implicit_descent} with inputs $\phi,\,x_0$ satisfies
    \begin{equation} \label{eq:rate_implicit}
        \phi(x_{k})-\phi_\star \leq \eta_k^{-1}D_d(x_\star,x_0).
    \end{equation}
\end{theorem}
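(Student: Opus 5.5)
The bound follows by testing the envelope at the minimizer \(x_\star\) of \(\phi\) and chaining it with the acceptance condition of \cref{algo:implicit_descent}. The envelope value \(\phi_\star^{\eta}\) is a minimum over \(x\), so for every \(\eta>0\) and every feasible point it is bounded above by the value of \(\phi^\eta\) at that point. Taking the point to be \(x_\star\), assumed to exist, with \(\phi(x_\star)=\phi_\star\), gives
\[
\phi_\star^{\eta}\;\le\;\phi^\eta(x_\star)\;=\;\phi(x_\star)+\tfrac{1}{\eta}D_d(x_\star,x_0)\;=\;\phi_\star+\tfrac{1}{\eta}D_d(x_\star,x_0).
\]

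For \(k\ge1\), the iterate \(x_k\) was accepted with \(\phi(x_k)\le\phi_\star^{\eta_k}\). Chaining this with the display at \(\eta=\eta_k\) and subtracting \(\phi_\star\) gives \(\phi(x_k)-\phi_\star\le\eta_k^{-1}D_d(x_\star,x_0)\), which is \eqref{eq:rate_implicit}. At \(k=0\) we have \(\eta_0=0\), so the right-hand side is \(+\infty\) (using the convention \(1/0=+\infty\), or \(D_d(x_\star,x_0)\cdot\infty\) in the degenerate case \(x_\star=x_0\)), and the bound holds trivially. The same argument covers \(k\ge1\) with \(\eta_k=0\). The monotonicity \(\eta_{k+1}\ge\eta_k\) is not needed for the bound; it only ensures the right-hand side is nonincreasing.

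The only delicate point is whether \(x_\star\) exists and lies where \(D_d(x_\star,x_0)\) is finite. \Cref{ass:setting} and the paper's standing use of interior minimizers place \(x_\star\in\operatorname{int}(\dom d)\), so \(D_d(x_\star,x_0)<\infty\) and the argument goes through. If no minimizer exists, I would run the same argument with an arbitrary comparison point \(x\) in place of \(x_\star\). This gives \(\phi(x_k)\le\phi(x)+\eta_k^{-1}D_d(x,x_0)\) for every such \(x\), and \eqref{eq:rate_implicit} is then read with \(x_\star\) standing for any such comparison point. No further obstacle is expected, since the proof is a two-line comparison.
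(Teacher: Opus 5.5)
Your proposal is correct and is essentially the paper's own proof: bound the envelope minimum by its value at $x_\star$, giving $\phi_\star^{\eta_k}\le\phi_\star+\eta_k^{-1}D_d(x_\star,x_0)$, then chain this with the acceptance condition $\phi(x_k)\le\phi_\star^{\eta_k}$. Your remarks on $k=0$ (where $\eta_0^{-1}=+\infty$) and on the existence and location of $x_\star$ are harmless additions that the paper leaves implicit.
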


To prove the bound, evaluate the regularized objective at \(x_\star\)
(\cref{app:rates}). The growth of \(\eta_k\) is therefore the central choice. Faster growth gives a
faster rate in \cref{eq:rate_implicit}, but it also lowers \(\phi_\star^{\eta_k}\) toward
\(\phi_\star\), so each iteration must make more progress.

\paragraph{Link with proximal methods.} The proximal point method \citep{lemarechal1997practical} centers each subproblem at the current
iterate, and Accelerated Hybrid Proximal Extragradient \citep{monteiro2013accelerated} and Catalyst
\citep{lin2015universal,lin2018catalyst} solve these subproblems approximately. In
\cref{algo:implicit_descent}, the anchor \(x_0\) stays fixed and no subproblem is solved.

\subsection{Envelope Dual Problem and Lower Bound}
\label{sec:dual_problem}

The envelope value \(\phi_\star^{\eta_{k+1}}\) is defined by a minimization, so the condition
\(\phi(x_{k+1})\le\phi_\star^{\eta_{k+1}}\) cannot be checked directly. We certify it through the
dual problem and provide a sufficient condition for the requirement of
\cref{algo:implicit_descent}. For any \(\lambda\), the Fenchel--Young inequality gives the affine lower bound
\begin{equation}\label{eq:affine_lower_model}
\ell_\lambda(x)
:= \langle x,\lambda\rangle-f^*(\lambda)
\le f(x),
\end{equation}
where \(f^*\) is the Fenchel conjugate of \(f\), with equality if and only if
\(\lambda=\nabla f(x)\). Adding the regularization defines the dual value and its
associated primal point:
\begin{align}
\mathcal D^\eta(\lambda)
&:= \min_x \left\{
\ell_\lambda(x)+(\mu+\eta^{-1})D_d(x,x_0)
\right\},
\label{eq:dual_function}\\
z^\eta(\lambda)
&:= \arg\min_x \left\{
\ell_\lambda(x)+(\mu+\eta^{-1})D_d(x,x_0)
\right\}
= \nabla d^*\!\left(
\nabla d(x_0)-\frac{\lambda}{\mu+\eta^{-1}}
\right).
\label{eq:primal_map}
\end{align}
By weak duality, we have $\ell_\lambda\le f$, and therefore $\mathcal D^\eta(\lambda)\le\phi_\star^\eta\le\phi^\eta(x)$
for every $x$.
Although $z^\eta(\lambda)$ is computable without evaluating $f^*(\lambda)$,
the dual value $\mathcal D^\eta(\lambda)$ generally is not. The next section shows how to certify \(\phi(x_{k+1})\le\mathcal D^{\eta_{k+1}}(\lambda_{k+1})\) without evaluating \(f^*\).
\section{Deriving ABrA-GD and Its Accepted-Parameter Rate}
\label{sec:algorithm}

\begin{figure}[p]
\begin{minipage}{\textwidth}
\setlength{\abovedisplayskip}{1pt}\setlength{\belowdisplayskip}{1pt}%
\setlength{\abovedisplayshortskip}{0pt}\setlength{\belowdisplayshortskip}{2pt}%

\begin{minipage}[t]{0.475\textwidth}
\centering
\includegraphics[width=\linewidth]{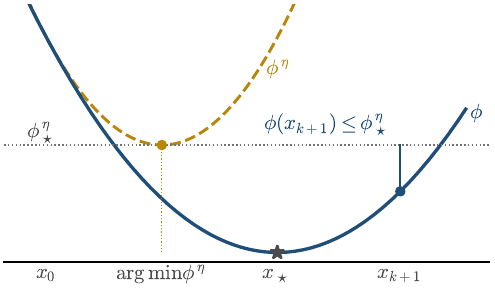}
\vspace{-5ex}
\caption{Envelope descent requires that
\(\phi(x_{k+1})\le\phi_\star^\eta\), without minimizing
the regularized objective \(\phi^\eta\).}
\label{fig:implicit_envelope}
\end{minipage}%
\hfill
\begin{minipage}[t]{0.475\textwidth}
\centering
\includegraphics[width=\linewidth]{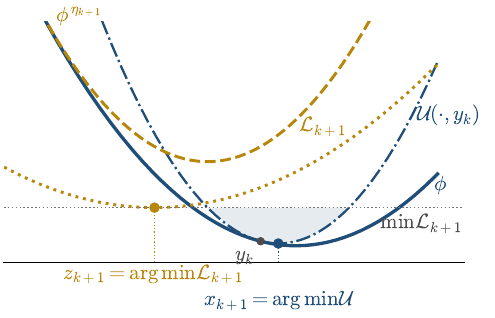}
\vspace{-5ex}
\caption{The model minimizers are \(x_{k+1}\) and \(z_{k+1}\).
An upper model \cref{eq:abra_upper_model} whose minimum value is below \(\min_x\mathcal L_{k+1}(x)\) \cref{eq:computable_lowerbound}
certifies envelope descent.}
\label{fig:model_sandwich}
\end{minipage}

\smallskip

\begin{algorithm}[H]
\caption{\textbf{ABrA-GD} --- \textbf{A}daptive \textbf{Br}egman \textbf{A}ccelerated \textbf{G}radient \textbf{D}escent}
\label{algo:bregman_adaptive}
\KwIn{$x_0$, $\mu\ge0$, $\phi$, initial estimates $L_{\mathrm{cur}}=L_0>0$ and $M_{-1}>0$ (default \(L_0=1\), \(M_{-1}=L_0\))}
\KwInit{$x_0^{\text{best}}=z_0=x_0$, $\lambda_0=0$, $\eta_0=0$, $\phi^{\rm best}_0=\phi(x_0),\,  t_0 = 1;$}

\For{$k=0,1,2,\dots$}{
    $M_k\leftarrow 
    \max
    \{M_{k-1}/4,\mu\}, \quad L_{\mathrm{cur}}\leftarrow \max\{ L_{\mathrm{cur}}/2,\,\mu\}, \quad \phi^{\rm best}_{k+1}\leftarrow \phi^{\rm best}_k, \quad x^{\rm best}_{k+1}\leftarrow x^{\rm best}_k$\;
    \Repeat{$
        \phi^{\rm best}_{k+1}
        \le \mathcal{L}_{k+1}(z_{k+1}) :=
        (1-t_k)\phi^{\rm best}_k
        +t_k\phi^{\rm low}_k
        -(\mu+\eta_{k+1}^{-1})D_d(z_k,z_{k+1})
    $\label{line:dual_decrease}}{
       \[
        \begin{aligned}
        M_k &\leftarrow 2M_k,
        &
        t_{k\geq 1}
        &=
        \min\left\{
        1,\,
        \frac{2(\eta_k^{-1}+\mu)}
        {\eta_k^{-1}+\sqrt{\eta_k^{-2}+4M_k(\eta_k^{-1}+\mu)}}
        \right\},
        \\
        \eta_{k+1}^{-1}
        &=M_kt_k^2-\mu,
        &
        \tau_k
        &=
        \frac{t_k-\mu/\sqrt{M_kL_{\mathrm{cur}}}}
             {1-\mu/\sqrt{M_kL_{\mathrm{cur}}}}.
        \end{aligned}
        \]
        \[
        (x_{k+1},z_{k+1},\lambda_{k+1},L_{\mathrm{cur}},\phi^{\rm low}_k)
        =
        \textsc{Breg-PD-step}
        (x_{k}^{\text{best}},z_k,\lambda_k,\eta_{k+1},L_{\mathrm{cur}},t_k,\tau_k).
        \]
        \lIf{$\phi^{\rm best}_{k+1}>\phi(x_{k+1})$}{
              $\phi^{\rm best}_{k+1}\leftarrow\phi(x_{k+1})$,\quad
              $x^{\rm best}_{k+1}\leftarrow x_{k+1}$
            }
    }
}
\end{algorithm}

\begin{algorithm}[H]
\caption{\textsc{Breg-PD-step}$(x,z,\lambda,\eta,L_{\mathrm{cur}},t,\tau)$}
\label{algo:accstep}
\LinesNotNumbered
\KwIn{$x,z,\lambda,\eta,L_{\mathrm{cur}},t,\tau$}
\[
y=(1-\tau)x+\tau z,
\qquad
\phi^{\mathrm{low}}
=
\phi(y)
+
\langle\nabla\phi(y),z-y\rangle
+
\mu D_d(z,y).
\]
\textbf{Update}
\begin{align}
(x_{\mathrm{new}},L_{\mathrm{cur}})
&=
\textsc{Backtracking-GD}
\bigl(y,\phi(y),\nabla\phi(y),L_{\mathrm{cur}}\bigr),
\tag{Primal Step}
\\
\lambda_{\mathrm{new}}
&=
(1-t)\lambda
+
t\Big(\nabla\phi(y)-\mu(\nabla d(y)-\nabla d(x_0))\Big),
\tag{Dual Step}
\\
z_{\mathrm{new}}
&=
\arg\min_u
\left\{
\langle \lambda_{\mathrm{new}},u\rangle
+
\left(\mu+\eta^{-1}\right)D_d(u,x_0)
\right\}.
\tag{Primal Map}
\end{align}
\Return $(x_{\mathrm{new}},z_{\mathrm{new}},\lambda_{\mathrm{new}},L_{\mathrm{cur}},\phi^{\mathrm{low}})$\;
\end{algorithm}

\begin{algorithm}[H]
\caption{\textsc{Backtracking-GD}$(y,\phi(y),\nabla\phi(y),L_{\mathrm{cur}})$}
\label{algo:backtracking_gradient}
\KwIn{$y$, $\phi(y)$, $\nabla\phi(y)$, trial constant $L_{\mathrm{cur}}>0$.

\KwInit{$L_{\mathrm{new}}=L_{\mathrm{cur}}/2$}}
\Repeat{$
\phi(x_{\mathrm{new}})
\le
\phi(y)
+
\langle \nabla\phi(y),x_{\mathrm{new}}-y\rangle
+
L_{\mathrm{new}} D_d(x_{\mathrm{new}},y)
=\phi(y)-L_{\mathrm{new}}D_d(y,x_{\mathrm{new}})$\label{line:primal_decrease}
}{
\[
L_{\mathrm{new}}\leftarrow 2L_{\mathrm{new}},\quad
x_{\mathrm{new}}
=
\arg\min_x
\left\{
\langle \nabla\phi(y),x\rangle
+
L_{\mathrm{new}}D_d(x,y)
\right\},
\]
}
\Return $(x_{\mathrm{new}},L_{\mathrm{new}})$\;
\end{algorithm}

\end{minipage}
\end{figure}

\Cref{algo:implicit_descent} is conceptual. The dual formulation \cref{eq:dual_function} says
what to certify, \(\phi(x_{k+1})\le\mathcal D^{\eta_{k+1}}(\lambda_{k+1})\), but not how to
construct \(x_{k+1}\), \(\lambda_{k+1}\) (hence \(z_{k+1}=z^{\eta_{k+1}}(\lambda_{k+1})\)), or
\(\eta_{k+1}\). ABrA-GD constructs all of them, with \(\eta_k\to\infty\), and maintains the
invariant
\begin{equation}\label{eq:abra_invariant}
\phi_k^{\mathrm{best}}
\le \mathcal D^{\eta_k}(\lambda_k)
\le \phi_\star^{\eta_k}.
\end{equation}
Each trial of an iteration evaluates one gradient at a point \(y_k\) between
\(x_k^{\mathrm{best}}\) and \(z_k\). This gradient gives a primal candidate \(x_{k+1}\) and a dual
update \(\lambda_{k+1}\). A computable lower bound then tests whether the invariant holds at step
\(k+1\); if not, backtracking increases \(M_k\) and repeats the trial.

\paragraph{1. Common gradient evaluation.}
Both model updates use one common gradient evaluation at the point $y_k$, defined below. Here \(x_k^{\mathrm{best}}\) is the best retained point and \(z_k\)
is the current primal map; the analysis specifies \(\tau_k\)
(\cref{lem:bound_acc_generic}):
\begin{equation}\label{eq:abra_evaluation_point}
y_k=(1-\tau_k)x_k^{\mathrm{best}}+\tau_k z_k.
\end{equation}

\paragraph{2. Primal upper model.}
The upper model at \(y_k\) is
\begin{equation}\label{eq:abra_upper_model}
\mathcal U(x,y_k)
:= \phi(y_k)
+ \langle \nabla\phi(y_k),x-y_k\rangle
+ L_{\mathrm{cur}}D_d(x,y_k).
\end{equation}
Since \(\phi\) is \(L\)-smooth relative to \(d\) (\cref{ass:setting}),
\(\phi\le\mathcal U(\cdot,y_k)\) when \(L_{\mathrm{cur}}\ge L\). The \emph{Primal Step} in
\cref{algo:accstep} minimizes this model:
\begin{equation}\label{eq:abra_primal_candidate}
x_{k+1}=\arg\min_x \mathcal U(x,y_k).
\end{equation}
\textsc{Backtracking-GD} (\cref{algo:backtracking_gradient}) adjusts \(L_{\mathrm{cur}}\) until
\(\phi(x_{k+1})\le\mathcal U(x_{k+1},y_k)\), and terminates whenever \(L_{\mathrm{cur}}\ge L\).

\paragraph{3. Dual step and primal map.}
The \emph{Dual Averaging Step} updates $\lambda_k$ with the same gradient:
\begin{equation}\label{eq:abra_dual_update}
\lambda_{k+1}
=(1-t_k)\lambda_k+t_k\nabla f(y_k),\quad t_k\in[0,1].
\end{equation}
Here \(\nabla f(y_k)=\nabla\phi(y_k)-\mu(\nabla d(y_k)-\nabla d(x_0))\),
so no additional objective-gradient evaluation is needed.
The relation to dual best responses and conditional gradients
\citep{bach2015duality} is discussed in \cref{sec:dual_avg}.
The \emph{Primal Map} then gives
\begin{equation}\label{eq:abra_lower_minimizer}
z_{k+1}=z^{\eta_{k+1}}(\lambda_{k+1}).
\end{equation}
Both updates in \cref{eq:abra_dual_update,eq:abra_lower_minimizer} appear in \cref{algo:accstep}.

\paragraph{4. Parameters \(t_k\), $\eta_k$, and \(M_k\).}
For \(k\ge1\), the weight \(t_k\) and regularization \(\eta_{k+1}\) satisfy
\begin{equation}\label{eq:abra_parameters}
\eta_{k+1}^{-1}=(1-t_k)\eta_k^{-1},
\qquad
\mu+\eta_{k+1}^{-1}=M_kt_k^2 .
\end{equation}
The first identity matches the weight on the previous dual model, enabling
the recursive certificate below. Together, the identities give
\(M_kt_k^2=(1-t_k)(\mu+\eta_k^{-1})+t_k\mu\), the quadratic relation
in Nesterov's method \citep{nesterov2004introductory}.
ABrA-GD selects \(M_k\) by backtracking and computes the positive root \(t_k\).
The accepted \(M_k\) controls the rate; see \cref{thm:generic_accelerated_rate_Mk}.

\paragraph{5. A computable certificate.}
Evaluating \(\mathcal D^{\eta_{k+1}}(\lambda_{k+1})\) requires \(f^*(\lambda_{k+1})\), which is not
tractable in general. This step builds a computable lower bound
\(\mathcal L_{k+1}(z_{k+1})\le\mathcal D^{\eta_{k+1}}(\lambda_{k+1})\). The exit condition of
\cref{algo:bregman_adaptive} checks \(\phi_{k+1}^{\mathrm{best}}\le\mathcal L_{k+1}(z_{k+1})\), which
gives \cref{eq:abra_invariant} at step \(k+1\).

\Cref{lem:pd_lower_step} gives the construction; we sketch it. The affine bound \(\ell_\lambda\) of
\cref{eq:affine_lower_model} is concave in \(\lambda\), so the dual update \cref{eq:abra_dual_update}
and the first relation in \cref{eq:abra_parameters} split \(\mathcal D^{\eta_{k+1}}(\lambda_{k+1})\)
into two terms. Relative strong convexity gives the \textcolor{certdual}{first}, and the Fenchel
equality with the three-point identity gives the \textcolor{certgrad}{second}:
\begin{equation}\label{eq:abra_certificate_split}
\begin{aligned}
\mathcal D^{\eta_{k+1}}(\lambda_{k+1})
\ge\min_x\Big\{&(1-t_k)\certbox{certdual}{\mathcal D^{\eta_k}(\lambda_k)\!+\!(\mu+\eta_k^{-1})D_d(x,z_k)}\\
&+t_k\certbox{certgrad}{\phi(y_k)\!+\!\langle\nabla\phi(y_k),x-y_k\rangle\!+\!\mu D_d(x,y_k)}\Big\}.
\end{aligned}
\end{equation}
By the invariant \cref{eq:abra_invariant} at step \(k\), \(\mathcal D^{\eta_k}(\lambda_k)\ge\phi_k^{\mathrm{best}}\).
Replacing \(\mathcal D^{\eta_k}(\lambda_k)\) by \(\phi_k^{\mathrm{best}}\) in the first box of
\cref{eq:abra_certificate_split} therefore gives
\(\mathcal D^{\eta_{k+1}}(\lambda_{k+1})\ge\min_x\mathcal L_{k+1}(x)\), where \(\mathcal L_{k+1}\) is the
lower model (\cref{fig:model_sandwich})
\begin{equation}\label{eq:abra_lower_model}
\hspace{-1.5em}
\mathcal L_{k+1}(x)
:=(1-t_k)\certbox{certdual}{\phi_k^{\mathrm{best}}\!+\!(\mu+\eta_k^{-1})D_d(x,z_k)}
+t_k\certbox{certgrad}{\phi(y_k)\!+\!\langle\nabla\phi(y_k),x-y_k\rangle\!+\!\mu D_d(x,y_k)}.
\end{equation}
Both boxes use only \(\phi_k^{\mathrm{best}}\), \(z_k\) and one gradient at \(y_k\), so
\(\mathcal L_{k+1}\) is computable. Finally, \cref{lem:pd_lower_step} shows that \(\mathcal L_{k+1}\) is minimized at \(z_{k+1}\). Writing \(\phi_k^{\mathrm{low}}\) for the \textcolor{certgrad}{second term} of \cref{eq:abra_lower_model}
at \(x=z_k\),
we obtain the right-hand side of the exit condition of \cref{algo:bregman_adaptive}:
\begin{equation} \label{eq:computable_lowerbound}
\mathcal L_{k+1}(z_{k+1})
=
(1-t_k)\phi_k^{\mathrm{best}}
+t_k\phi_k^{\mathrm{low}}
-(\mu+\eta_{k+1}^{-1})D_d(z_k,z_{k+1}).
\end{equation}

\subsection*{Convergence in terms of the accepted parameters} 
\label{sec:analysis}

The exit condition preserves the envelope descent requirement. The following theorem expresses
the resulting rate in terms of the accepted acceleration parameters \(M_k\), without imposing an
additional condition on the geometry. \textit{The theorem does not bound the accepted parameters themselves, nor show that the inner loop
terminates}. \Cref{sec:bldf,sec:accelerated-rate} give accelerated rates under bounded local
distortion.

\begin{restatable}[A posteriori convergence of ABrA-GD]{theorem}{GenericAcceleratedRateMk}
\label{thm:generic_accelerated_rate_Mk}
Under \cref{ass:setting}, assume that the inner loop of \cref{algo:bregman_adaptive} exits at every
iteration. Let
\(M_k>\mu\) and choose \(t_k\in(0,1]\) by
\[
\mu+\eta_{k+1}^{-1}=M_kt_k^2
\quad(k\ge0),
\qquad
\eta_{k+1}^{-1}=(1-t_k)\eta_k^{-1}
\quad(k\ge1),
\qquad
t_0=1,
\quad
\eta_1^{-1}=M_0-\mu .
\]
Then, for any \(x_\star\in\argmin_x\phi(x)\),
\[
\phi_{k+1}^{\mathrm{best}}\le \phi_k^{\mathrm{best}},
\qquad
\phi_{k+1}^{\mathrm{best}}\le \phi_\star^{\eta_{k+1}},
\qquad
\phi_{k+1}^{\mathrm{best}}-\phi_\star
\le
\eta_{k+1}^{-1}D_d(x_\star,x_0).
\]
Moreover, the decay of \(\eta_k^{-1}\) is controlled by the acceleration parameters \(M_k\):
\[
\begin{array}{c|c|c}
\textbf{Case}
& \textbf{No uniform bound on \(M_k\)}
& \textbf{If \(M_i\le M\) for all \(i\ge K\)} \\
\hline
\mu=0
&
\displaystyle
\eta_k^{-1}
\le
\frac{4}{
\left(\sum_{i=0}^{k-1}M_i^{-1/2}\right)^2}
&
\displaystyle
\eta_k^{-1}
\le
\frac{\eta_K^{-1}}{
\left(
1+
\frac{k-K}{2\sqrt{M\eta_K}}
\right)^2}
\\[1.4em]
\mu>0
&
\displaystyle
\eta_k^{-1}
\le
(M_0-\mu)
\prod_{i=1}^{k-1}
\left(1-\sqrt{\frac{\mu}{M_i}}\right)
&
\displaystyle
\eta_k^{-1}
\le
\eta_K^{-1}
\left(
1-\sqrt{\frac{\mu}{M}}
\right)^{k-K}.
\end{array}
\]
\end{restatable}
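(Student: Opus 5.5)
The plan has two parts: an induction for the invariant \cref{eq:abra_invariant}, and a separate recursion analysis for the decay of \(\eta_k^{-1}\). Monotonicity \(\phi_{k+1}^{\mathrm{best}}\le\phi_k^{\mathrm{best}}\) is immediate, since the algorithm initializes \(\phi^{\rm best}_{k+1}\leftarrow\phi^{\rm best}_k\) and only ever lowers it.

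\textbf{Invariant.} I will prove \(\phi_k^{\mathrm{best}}\le\mathcal D^{\eta_k}(\lambda_k)\le\phi_\star^{\eta_k}\) for \(k\ge1\) by induction on \(k\).
\begin{itemize}
\item \emph{Base case \(k=0\).} Here \(t_0=1\), so the first box of \cref{eq:abra_certificate_split} has weight zero. The case \(\eta_0^{-1}=\infty\) is therefore harmless, and \(\mathcal L_1\) is just the gradient lower model at \(y_0\) plus \(\mu D_d(\cdot,y_0)\).
\item \emph{Inductive step.} I invoke \cref{lem:pd_lower_step}. Concavity of \(\lambda\mapsto\ell_\lambda(x)\), together with \(\eta_{k+1}^{-1}=(1-t_k)\eta_k^{-1}\), splits \((\mu+\eta_{k+1}^{-1})D_d(x,x_0)\) into a convex combination. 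This gives \cref{eq:abra_certificate_split}, hence \(\mathcal D^{\eta_{k+1}}(\lambda_{k+1})\ge\min_x\mathcal L_{k+1}(x)\).
\item \emph{Evaluating the minimum.} The minimum is attained at \(z_{k+1}\) and equals \cref{eq:computable_lowerbound}, by the three-point identity.
\item \emph{Closing.} The exit condition then yields \(\phi_{k+1}^{\mathrm{best}}\le\mathcal D^{\eta_{k+1}}(\lambda_{k+1})\), and weak duality gives \(\le\phi_\star^{\eta_{k+1}}\).
\end{itemize}
The error bound follows exactly as in \cref{thm:implicit_descent_rate}: evaluate \(\phi^{\eta_{k+1}}\) at \(x_\star\) to get \(\phi_\star^{\eta_{k+1}}\le\phi_\star+\eta_{k+1}^{-1}D_d(x_\star,x_0)\).

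\textbf{Rates.} Write \(a_k=\eta_k^{-1}\). The two parameter relations give \(a_{k+1}=(1-t_k)a_k\) and \(\mu+a_{k+1}=M_kt_k^2\).

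For \(\mu>0\): from \(M_kt_k^2=\mu+a_{k+1}\ge\mu\) we get \(t_k\ge\sqrt{\mu/M_k}\), so \(a_{k+1}\le(1-\sqrt{\mu/M_k})a_k\). Starting from \(a_1=M_0-\mu\), this gives the product bound, and the uniform-\(M\) version is immediate.

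For \(\mu=0\): \(a_{k+1}=M_kt_k^2\) and \(t_k=1-a_{k+1}/a_k\). This is the standard Nesterov estimate:
\[
\frac{1}{\sqrt{a_{k+1}}}-\frac{1}{\sqrt{a_k}}
=\frac{a_k-a_{k+1}}{\sqrt{a_ka_{k+1}}(\sqrt{a_k}+\sqrt{a_{k+1}})}
\ge\frac{t_ka_k}{2a_k\sqrt{a_{k+1}}}
=\frac{t_k}{2\sqrt{a_{k+1}}}
=\frac{1}{2\sqrt{M_k}},
\]
using \(a_{k+1}\le a_k\). Summing from \(i=1\), and using \(1/\sqrt{a_1}=1/\sqrt{M_0}\ge\tfrac{1}{2\sqrt{M_0}}\), gives \(a_k\le 4/(\sum_{i<k}M_i^{-1/2})^2\). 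Summing instead from \(K\) gives \(1/\sqrt{a_k}\ge 1/\sqrt{a_K}+(k-K)/(2\sqrt M)\), which rearranges to the displayed bound with \(\sqrt{M\eta_K}\).

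\textbf{Main obstacle.} I expect the main work to be verifying \cref{lem:pd_lower_step}, specifically checking that \(z_{k+1}\) minimizes \(\mathcal L_{k+1}\). The plan is to show the gradients of the two boxes combine so that
\[
\nabla\mathcal L_{k+1}(x)=\lambda_{k+1}+(\mu+\eta_{k+1}^{-1})(\nabla d(x)-\nabla d(x_0)).
\]
This uses the recursive definition of \(\lambda_k\) (with \(z_k=z^{\eta_k}(\lambda_k)\)) and \(\nabla f(y_k)=\nabla\phi(y_k)-\mu(\nabla d(y_k)-\nabla d(x_0))\). The condition \(\nabla\mathcal L_{k+1}=0\) is then exactly the defining equation \cref{eq:primal_map} of \(z_{k+1}\). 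This argument also requires that \(z_{k+1}\) lie in the interior of \(\dom d\), which follows from \(d\) being Legendre.
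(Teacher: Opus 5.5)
Apart from one step, this is the paper's proof. You prove the invariant via \cref{lem:pd_lower_step} and weak duality, and the error bound via \cref{thm:implicit_descent_rate}. The recursions for \(\eta_k^{-1}\) are also the same: your \(1/\sqrt{a_k}\) estimate is the paper's \(\sqrt{\eta_{k+1}}-\sqrt{\eta_k}\ge 1/(2\sqrt{M_k})\) in other variables. Starting the sums at \(a_1=M_0-\mu\) is cleaner than the paper's limiting convention \(\eta_0=0\). Your inductive step for \(k\ge1\) and the rate computations are correct.

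\textbf{Gap: the base case.} At \(k=0\) the first box does not have weight zero. Its coefficient is \((1-t_0)(\mu+\eta_0^{-1})\), a \(0\cdot\infty\) product. The split \(\mu+\eta_1^{-1}=(1-t_0)(\mu+\eta_0^{-1})+t_0\mu\), which your inductive step relies on, forces this product to equal \(\eta_1^{-1}=M_0-\mu>0\).

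If you drop it, \(\mathcal L_1(x)=\phi(y_0)+\langle\nabla\phi(y_0),x-y_0\rangle+\mu D_d(x,y_0)\) has Bregman weight \(\mu\) rather than \(\alpha_1=M_0\). For \(\mu=0\) it is unbounded below. For \(\mu>0\) its minimizer is \(\nabla d^*(\nabla d(x_0)-\nabla\phi(x_0)/\mu)\), not \(z_1\).

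Concretely, \(y_0=z_0=x_0\) and \(\nabla\phi(x_0)=\alpha_1(\nabla d(x_0)-\nabla d(z_1))\) give \(\mathcal L_1(z_1)=\phi_0^{\mathrm{low}}-\alpha_1D_d(z_0,z_1)-\eta_1^{-1}D_d(z_1,x_0)\). This lies below the value the exit test compares against. So \(\mathcal D^{\eta_1}(\lambda_1)\ge\min_x\mathcal L_1\) together with the exit test does not give \(\phi_1^{\mathrm{best}}\le\mathcal D^{\eta_1}(\lambda_1)\).

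Your formula for \(\nabla\mathcal L_{k+1}\) fails at \(k=0\) for the same reason. It uses \(\lambda_k+\alpha_k(\nabla d(z_k)-\nabla d(x_0))=0\), which reads \(0+\infty\cdot 0\) there.

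\textbf{Repair.} Use the \(t_k=1\) branch of \cref{lem:pd_lower_step}. Since \(\lambda_1=\nabla f(y_0)\), Fenchel equality evaluates the dual value exactly:
\[
\mathcal D^{\eta_1}(\lambda_1)=\min_x\Bigl\{\phi(y_0)+\langle\nabla\phi(y_0),x-y_0\rangle+\mu D_d(x,y_0)+\eta_1^{-1}D_d(x,x_0)\Bigr\}.
\]
This model has Bregman weight \(\alpha_1\) and is minimized at \(z_1\). The three-point identity at \(x=z_0=x_0\) then gives \(\mathcal D^{\eta_1}(\lambda_1)=\phi_0^{\mathrm{low}}-\alpha_1D_d(z_0,z_1)\), which is exactly the exit-test value, so the induction can start.
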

\section{Dual Bregman Length Distortion}
\label{sec:bldf}

We identify the local geometric distortion in the comparison between the gradient and
dual-averaging steps. We define the dual BLDF, relate it to local norms, and give formulas for
standard kernels. \Cref{sec:accelerated-rate} uses this condition to obtain accelerated rates and
compares them with ABPG.

\subsection{Comparing the gradient and dual-averaging steps (convex case)}

The exit condition requires \(\phi(x_{k+1})\le\mathcal L_{k+1}(z_{k+1})\). We compare how far the
upper model and the lower bound decrease from the common value \(\phi(y_k)\).
Here \(\mu=0\) and \(\tau_k=t_k\). The primal step
(line~\ref{line:primal_decrease} of \cref{algo:backtracking_gradient}) and the exit test
(line~\ref{line:dual_decrease} of \cref{algo:bregman_adaptive}) give
\begin{align*}
\phi(x_{k+1})\le\phi(y_k)-\textcolor{primalstep}{\underbrace{L_{\mathrm{cur}}D_d(y_k,x_{k+1})}_{\text{primal decrease}}},\quad 
\mathcal L_{k+1}(z_{k+1})\ge\phi(y_k)-\textcolor{dualstep}{\underbrace{\eta_{k+1}^{-1}D_d(z_k,z_{k+1})}_{\text{dual decrease}}},
\end{align*}
where the second inequality uses \(y_k=(1-t_k)x_k^{\mathrm{best}}+t_kz_k\) and convexity,
\((1-t_k)\phi_k^{\mathrm{best}}+t_k\phi_k^{\mathrm{low}}\ge\phi(y_k)\) (\cref{lem:bound_acc_convex}).
The exit condition is therefore guaranteed to hold when the upper model decreases at least as much as the lower
bound:
\begin{equation}\label{eq:decrease_comparison}
\textcolor{dualstep}{\eta_{k+1}^{-1}D_d(z_k,z_{k+1})}\le \textcolor{primalstep}{L_{\mathrm{cur}}D_d(y_k,x_{k+1})} \quad \Rightarrow \quad \phi(x_{k+1}) \leq \mathcal L_{k+1}(z_{k+1}).
\end{equation}
When
\(d=\tfrac12\|\cdot\|^2\), both decreases are multiples of \(\|\nabla\phi(y_k)\|^2\):
\(\textcolor{primalstep}{L_{\mathrm{cur}}^{-1}\|\nabla\phi(y_k)\|^2/2}\) and
\(\textcolor{dualstep}{t_k^2\eta_{k+1}\|\nabla\phi(y_k)\|^2/2}\). Equality in \cref{eq:decrease_comparison} gives
\(\eta_{k+1}^{-1}=L_{\mathrm{cur}}t_k^2\), the Nesterov coupling (\cref{app:euclidean_nesterov}).
For a general \(d\), the two decreases are Bregman divergences taken at different points and with
different step lengths, so they cannot be compared term by term.
In dual coordinates, however, with \(g_k=\nabla\phi(y_k)\), the two steps are segments along the same direction:
\[
\nabla d(x_{k+1})=\nabla d(y_k)-L_{\mathrm{cur}}^{-1}g_k,
\qquad
\nabla d(z_{k+1})=\nabla d(z_k)-t_k\eta_{k+1}g_k.
\]
Since \(D_d(x,y)=D_{d^*}(\nabla d(y),\nabla d(x))\), the condition in
\cref{eq:decrease_comparison} reads
\[
\textcolor{dualstep}{\eta_{k+1}^{-1}D_{d^*}\big(\nabla d(z_k)-t_k\eta_{k+1}g_k,\,\nabla d(z_k)\big)}
\le
\textcolor{primalstep}{L_{\mathrm{cur}}D_{d^*}\big(\nabla d(y_k)-L_{\mathrm{cur}}^{-1}g_k,\,\nabla d(y_k)\big)}.
\]
Both sides measure the Bregman length of a segment along the same direction \(-g_k\), from
different anchors, \(\nabla d(z_k)\) and \(\nabla d(y_k)\), and with different scales,
\(t_k\eta_{k+1}\) and \(L_{\mathrm{cur}}^{-1}\).

\paragraph{Strongly convex case.} When \(\mu>0\), the two dual segments point toward
\(\nabla d(q_{k+1})\), with \(q_{k+1}\) as in \cref{lem:bound_acc_strongly_convex}, and are no longer parallel (\cref{fig:two_lengths}(b)). The proof of
\cref{prop:strongly_convex_rate_dual_distortion} makes two comparisons, each between two segments
with the same anchor and direction but different scales: from \(\nabla d(y_k)\) to
\(\nabla d(x_{k+1})\) and to \(\nabla d(q_{k+1})\), and likewise from \(\nabla d(z_k)\).

\begin{figure}[t]
\centering
\includegraphics[width=0.72\textwidth]{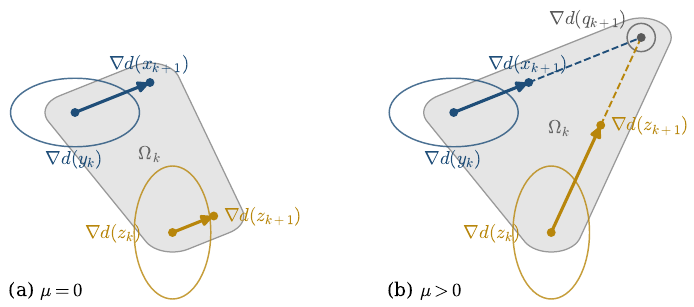}
\caption{The two steps in dual coordinates \(u=\nabla d(x)\) for the Burg kernel. The ellipses
are the unit balls \(\{\xi:\tfrac12\langle\xi,\nabla^2d^*(w)\,\xi\rangle\le1\}\) of the local dual
norm (\cref{eq:primal_dual_norm}) at the anchors \(w\), drawn at a common scale; they change
across the region, and \(\rho_{d^*}\) bounds this change on the shaded convex set, which
contains \(\Omega_k\) (\cref{def:omega_k}). \textbf{(a)} When \(\mu=0\), the primal
and dual steps are parallel displacements along \(-\nabla\phi(y_k)\), from different anchors and
with different scales. \textbf{(b)} When \(\mu>0\), \(\nabla d(x_{k+1})\) and \(\nabla d(z_{k+1})\)
lie on two segments whose extensions meet at \(\nabla d(q_{k+1})\); the shaded set contains the
triangle they form with \(\nabla d(y_k)\) and \(\nabla d(z_k)\).}
\label{fig:two_lengths}
\end{figure}

\subsection{The dual Bregman length distortion factor}\label{sec:dual_bldf_def}

In both cases, the comparison is between Bregman lengths of segments along a common direction,
after a change of anchor, of scale, or both. The \textit{local dual Bregman length distortion
factor} bounds this change.

\begin{definition}[Local Dual Bregman Length Distortion Factor (\textbf{Dual BLDF})] \label{def:dual_bldf}
Let $\Omega\subset \operatorname{int}(\dom d^*)$, $s>0$, and $\gamma>0$. The \emph{dual BLDF} of $d^*$ on $\Omega$ is
\[
c_{d^*}(s,\Omega;\gamma)
:=
\sup_{u_1,u_2,v\neq 0}
\left(
\frac{D_{d^*}(u_2+s v,u_2)}
{s^\gamma D_{d^*}(u_1+v,u_1)}
\right)^{1/2}
\quad \text{s.t.}\quad
\begin{cases}
u_1+[0,1]v\subset \Omega,\\
u_2+[0,s]v\subset \Omega.
\end{cases}
\]
\end{definition}

The numerator uses the displacement $sv$ at anchor $u_2$, while the denominator uses $v$ at
anchor $u_1$. At $\gamma=2$, the factor $s^2$ removes the quadratic scaling, leaving the change
due to the geometry. Our convergence analysis uses \(\gamma=2\); other exponents serve only the
comparison with triangle scaling (\cref{app:tsp}). If $d^*$ is $L_{d^*}$-smooth and $\mu_{d^*}$-strongly convex on $\Omega$, then
$c_{d^*}(s,\Omega;2)\le\sqrt{L_{d^*}/\mu_{d^*}}$ (\cref{lem:bldf_smooth_strongly_convex}), so our rate then matches that of
\citet{auslender2006interior} up to a constant. If $d^*$ is twice differentiable, the dual BLDF is
bounded by the variation of the dual local norm over $\Omega$ (\cref{prop:dual_bldf_twice_diff}).

\begin{definition}[Dual norm distortion factor]
Let $\Omega\subset \operatorname{int}(\dom d^*)$, and assume that $d^*$ is twice differentiable on $\Omega$. The \emph{dual norm distortion factor} of $d^*$ on $\Omega$ is
\[
\rho_{d^*}(\Omega)
:=
\sup_{w_1,w_2\in\Omega}\;
\sup_{\xi\neq 0}
\left(\frac{\langle\xi,\nabla^2d^*(w_2)\,\xi\rangle}{\langle\xi,\nabla^2d^*(w_1)\,\xi\rangle}\right)^{1/2}
=
\underbrace{\sup_{w_1,w_2\in\Omega}
\max_{1\le i\le n}
\left(
\frac{(d_i^*)''((w_2)_i)}{(d_i^*)''((w_1)_i)}
\right)^{1/2}}_{\text{if \(d^*\) is separable (\cref{cor:rho_separable})}}.
\]
\end{definition}

The factor $\rho_{d^*}(\Omega)$ measures the largest change in local norm across the region,
after taking the supremum over directions. It has no scaling parameter and bounds the dual BLDF
at $\gamma=2$.

\begin{restatable}{proposition}{DualBLDFTwiceDiff}\label{prop:dual_bldf_twice_diff}
Assume that $d^*$ is twice differentiable on $\Omega\subset \operatorname{int}(\dom d^*)$. Then, for every $s>0$,
\[
c_{d^*}(s,\Omega;2)\le \rho_{d^*}(\Omega).
\]
\end{restatable}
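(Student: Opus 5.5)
We express both Bregman lengths through the integral form of Taylor's theorem along their segments, which lie in $\Omega$ by the constraints of \cref{def:dual_bldf}. For $u+[0,1]w\subset\Omega$ and $d^*$ twice differentiable there,
\[
D_{d^*}(u+w,u)=\int_0^1(1-\theta)\,\langle w,\nabla^2 d^*(u+\theta w)\,w\rangle\,d\theta .
\]
Apply this to the numerator with $u=u_2$, $w=sv$. The quadratic form is homogeneous of degree two, so
\[
D_{d^*}(u_2+sv,u_2)=s^2\int_0^1(1-\theta)\langle v,\nabla^2d^*(u_2+\theta s v)v\rangle\,d\theta .
\]
This shows that the factor $s^\gamma$ with $\gamma=2$ cancels exactly. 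What remains is to compare
\[
\int_0^1(1-\theta)\langle v,\nabla^2d^*(u_2+\theta sv)v\rangle d\theta
\quad\text{with}\quad
\int_0^1(1-\theta)\langle v,\nabla^2d^*(u_1+\theta v)v\rangle d\theta .
\]

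The two integrands evaluate the Hessian at points $w_2=u_2+\theta s v$ and $w_1=u_1+\theta v$. Both lie in $\Omega$ because the whole segments do. By the definition of $\rho_{d^*}(\Omega)$, for every pair $w_1,w_2\in\Omega$ and every $\xi$,
\[
\langle\xi,\nabla^2d^*(w_2)\xi\rangle\le\rho_{d^*}(\Omega)^2\langle\xi,\nabla^2d^*(w_1)\xi\rangle .
\]
Taking $\xi=v$ and the same $\theta$ on both sides gives a pointwise bound between the integrands. The weight $1-\theta$ is nonnegative, so integrating yields
\[
D_{d^*}(u_2+sv,u_2)\le s^2\rho_{d^*}(\Omega)^2D_{d^*}(u_1+v,u_1).
\]
The denominator is strictly positive because $d^*$ is strictly convex on $\operatorname{int}(\dom d^*)$ (Legendre) and $v\neq0$. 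Dividing, taking square roots, and then the supremum over admissible $u_1,u_2,v$ gives $c_{d^*}(s,\Omega;2)\le\rho_{d^*}(\Omega)$.

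The only delicate step is justifying the integral remainder formula. We need twice differentiability along the entire segment, which holds because the segments are contained in $\Omega$. We also need integrability of $\theta\mapsto\nabla^2d^*(u+\theta w)$. If mere twice differentiability, without continuity of the Hessian, causes trouble, we would use instead the mean-value form of the remainder: $D_{d^*}(u+w,u)=\tfrac12\langle w,\nabla^2d^*(u+\bar\theta w)w\rangle$ for some $\bar\theta\in(0,1)$. This comes from applying Taylor's theorem with Lagrange remainder to the scalar function $g(\theta)=d^*(u+\theta w)$, for which $D_{d^*}(u+w,u)=g(1)-g(0)-g'(0)$. The comparison then goes through identically, with the single points $w_1=u_1+\bar\theta_1 v$ and $w_2=u_2+\bar\theta_2 sv$ in $\Omega$.
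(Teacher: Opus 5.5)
Your proposal is correct and follows essentially the paper's argument: the integral Taylor representation along both dual segments, homogeneity to extract $s^2$, and a Hessian comparison controlled by $\rho_{d^*}(\Omega)^2$. The paper bounds each divergence by the inf/sup of $\|v\|_{w,*}^2$ over its segment instead of comparing integrands at the same $\theta$, and it settles your integrability concern by noting that $g'$ is nondecreasing and differentiable everywhere, hence absolutely continuous; your Lagrange-remainder fallback already suffices under pointwise twice differentiability.
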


If \(\Omega\) is open and \(\nabla^2d^*\) is continuous and positive definite on
\(\Omega\), then \(c_{d^*}(s,\Omega;2)=\rho_{d^*}(\Omega)\) for every \(s>0\).
More generally, \(c_{d^*}(s,\Omega;\gamma)=s^{1-\gamma/2}\rho_{d^*}(\Omega)\)
for every \(s>0\) and \(\gamma>0\) (\cref{prop:dual_bldf_equality}).

The rates of \cref{sec:accelerated-rate} use \(\rho_{d^*}\). \Cref{prop:dual_bldf_twice_diff} gives simple expressions for common geometries
(\cref{tab:rho_examples}). For instance, for the Burg entropy,
\(\rho_{d^*}(\nabla d(\mathcal X))\le b/a\) whenever \(a\le x\le b\) for all \(x\in\mathcal X\).
\section{Accelerated Rates under Bounded Distortion}
\label{sec:accelerated-rate}

The accepted-parameter bound of \cref{thm:generic_accelerated_rate_Mk} becomes an accelerated rate when the accepted
\(M_k\) remain bounded. We now bound them using the local distortion
\(\rho_k\) of each iteration.

\begin{definition}[Local distortion]\label{def:omega_k}
Let \(\Omega_k\) be the convex hull of the dual segments of all trials of the \(M_k\)-search in
iteration \(k\) of \cref{algo:bregman_adaptive}. The \emph{local distortion} of iteration \(k\) is
\(\rho_k:=\rho_{d^*}(\Omega_k)\).
\end{definition}

\subsection{Convergence and iteration complexity}

Using the dual BLDF, \cref{prop:convex_rate_dual_distortion,prop:strongly_convex_rate_dual_distortion}
show that every trial with \(M_k\ge\rho_k^2L\) (\(\mu=0\)), or \(M_k\ge\rho_k^4L\) (\(\mu>0\)), passes
the exit test of \cref{algo:bregman_adaptive}. Hence, if \(\rho_k\le\rho\) from some iteration \(K\)
on, backtracking eventually keeps \(M_k\) below \(4\rho^2L\), respectively \(4\rho^4L\).

\begin{restatable}[Accelerated rate under bounded distortion]{theorem}{EventualBLDFRate}
\label{thm:eventual_bldf_rate}
Under \cref{ass:setting}, assume \(d^*\) is twice differentiable, and let \(\rho_k\) be as in
\cref{def:omega_k}. If there exist \(K\ge 0\) and \(\rho\) such that the inner loop of
\cref{algo:bregman_adaptive} exits at every iteration \(k<K\) and \(\rho_k\le\rho\) for all
\(k\ge K\), then it exits at every iteration, and there is a finite \(\bar K\ge K\) such that, for all
\(k\ge\bar K\), we have \(M_k\le 4\rho^2L\) (\(\mu=0\)), respectively \(M_k\le 4\rho^4L\) (\(\mu>0\)). By \cref{thm:generic_accelerated_rate_Mk}, this implies
\(\phi_k^{\mathrm{best}}-\phi_\star\le\eta_k^{-1}D_d(x_\star,x_0)\), with
\[
\eta_k^{-1}\le\frac{\eta_{\bar K}^{-1}}{\big(1+\frac{k-\bar K}{4\rho\sqrt{L\eta_{\bar K}}}\big)^2}
\;\textcolor{gray}{(\mu=0)},
\qquad
\eta_k^{-1}\le\eta_{\bar K}^{-1}\Big(1-\sqrt{\tfrac{\mu}{4\rho^4L}}\Big)^{k-\bar K}
\;\textcolor{gray}{(\mu>0)}.
\]
\end{restatable}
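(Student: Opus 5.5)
We take as given the trial-acceptance results stated just before the theorem, \cref{prop:convex_rate_dual_distortion,prop:strongly_convex_rate_dual_distortion}: at iteration \(k\), any trial with \(M_k\ge\rho_k^2L\) (\(\mu=0\)), respectively \(M_k\ge\rho_k^4L\) (\(\mu>0\)), passes the exit test of \cref{algo:bregman_adaptive}. These propositions also need the primal backtracking to terminate. We get that from relative \(L\)-smoothness, since \(\textsc{Backtracking-GD}\) stops once \(L_{\mathrm{cur}}\ge L\), so \(L_{\mathrm{cur}}<2L\) after any call. Write \(\bar M:=\rho^2L\) for \(\mu=0\) and \(\bar M:=\rho^4L\) for \(\mu>0\); the argument is the same in both cases.

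\emph{Step 1 (every inner loop exits).} Fix \(k\ge K\). Every trial doubles \(M_k\), and \(\rho_k\le\rho\) by hypothesis. After finitely many doublings we therefore have \(M_k\ge\bar M\ge\rho_k^{\gamma}L\) with \(\gamma\in\{2,4\}\), and the exit test passes. One point needs care: \(\Omega_k\) is the convex hull of the segments of \emph{all} trials, including the trial that is finally accepted. The hypothesis \(\rho_k\le\rho\) is a statement about this whole hull, and \(\rho_{d^*}\) is monotone under inclusion, so for each trial the sub-hull spanned by its own segments also has distortion at most \(\rho\). The proposition can then be applied to each trial separately. Combined with the hypothesis for \(k<K\), this shows the loop exits at every iteration.

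\emph{Step 2 (eventual bound on \(M_k\)).} Let \(k\ge K\). If the initial value \(\max\{M_{k-1}/4,\mu\}\) doubled once is already at least \(\bar M\), the first trial passes, so \(M_k=\max\{M_{k-1}/2,2\mu\}\). Otherwise doubling stops at the first value \(\ge\bar M\), which is below \(2\bar M\). In particular, whenever \(M_{k-1}\le 4\bar M\) we get \(M_k\le\max\{2\bar M,2\mu\}\le 4\bar M\), using \(\mu<L\le\bar M\) (as \(\rho\ge1\)). When \(M_{k-1}>4\bar M\), either \(M_k\le 2\bar M\) or \(M_k=\max\{M_{k-1}/2,2\mu\}\), so an excess above \(4\bar M\) is halved at each step. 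Starting from \(M_K<\infty\), we reach \(M_k\le 4\bar M\) after about \(\log_2(M_K/\bar M)\) steps, and the set \(\{M\le4\bar M\}\) is invariant. This defines \(\bar K\).

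\emph{Step 3 (rate).} Apply the right column of \cref{thm:generic_accelerated_rate_Mk} with \(M=4\bar M\) from \(\bar K\) on. For \(\mu=0\), \(2\sqrt{M}=4\rho\sqrt L\), which gives the stated bound. For \(\mu>0\), \(\sqrt{\mu/M}=\sqrt{\mu/(4\rho^4L)}\). The error bound \(\phi_k^{\mathrm{best}}-\phi_\star\le\eta_k^{-1}D_d(x_\star,x_0)\) then follows from the same theorem, since all inner loops exit.

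The main obstacle is Step 1. We must make sure that the quantities the acceptance propositions need are controlled by \(\rho_k\) as defined in \cref{def:omega_k}: the dual segments, the anchor \(\nabla d(q_{k+1})\) when \(\mu>0\), and the current \(L_{\mathrm{cur}}\). We must also handle \(\tau_k\) when \(\mu>0\), which is well defined only when \(M_kL_{\mathrm{cur}}>\mu^2\). We would first check that the propositions are stated trial-wise with \(\Omega_k\) containing that trial's triangle, so that monotonicity of \(\rho_{d^*}\) in \(\Omega\) closes the argument.
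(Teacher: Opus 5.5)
Your route is the paper's: acceptance of large trials via \cref{prop:convex_rate_dual_distortion,prop:strongly_convex_rate_dual_distortion}, exit by doubling, geometric decay of inherited overestimates to define \(\bar K\), then the right column of \cref{thm:generic_accelerated_rate_Mk} with \(M=4\bar M\).

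The one real problem is the acceptance threshold in Step 1. In those propositions, \(L\) is the constant the gradient step actually uses, i.e.\ the \(L_{\mathrm{cur}}\) returned by \textsc{Backtracking-GD}, not the true relative smoothness constant. It enters through \(\nabla d(x_{k+1})=\nabla d(y_k)-L^{-1}\nabla\phi(y_k)\) and through the exit test \(\phi(x_{k+1})\le\phi(y_k)-LD_d(y_k,x_{k+1})\), and the strongly convex proposition states this explicitly. So the propositions give acceptance only for \(M_k\ge\rho_k^2L_{\mathrm{cur}}\) (resp.\ \(\rho_k^4L_{\mathrm{cur}}\)). When \(L_{\mathrm{cur}}\in(L,2L)\), a trial with \(\bar M\le M_k<\rho^2L_{\mathrm{cur}}\) is not covered. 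The certified primal decrease \(L_{\mathrm{cur}}D_d(y_k,x_{k+1})\) is taken at step \(1/L_{\mathrm{cur}}\) and shrinks as \(L_{\mathrm{cur}}\) grows. Hence two of your claims are unjustified: that any \(M_k\ge\bar M\) passes, and that doubling stops below \(2\bar M\). You recorded \(L_{\mathrm{cur}}<2L\) but never used it.

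The fix is the paper's. Since \(L_{\mathrm{cur}}\le2L\), every trial with \(M\ge2\bar M\) passes, and your Step 2 then works with \(2\bar M\) in place of \(\bar M\).
If \(M_{k-1}\le4\bar M\), the first trial is at most \(2\bar M\) (using \(\mu<L\le\bar M\)). Doubling stops no later than the first value \(\ge2\bar M\), which is below \(4\bar M\).
If \(M_{k-1}>4\bar M\), the first trial \(M_{k-1}/2\) exceeds \(2\bar M\) and is accepted, so the excess halves.
The constant \(4\) in the theorem is therefore a factor \(2\) from \(L_{\mathrm{cur}}\le2L\) times a factor \(2\) from the doubling overshoot. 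Your bookkeeping reached \(4\bar M\) only through the slack in \(\max\{2\bar M,2\mu\}\le4\bar M\).

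Two minor points:
\begin{itemize}
\item \(L_{\mathrm{cur}}<2L\) after a call needs the call's input to be below \(2L\), because \textsc{Backtracking-GD} accepts any input \(\ge L\) immediately. The halving at the start of each outer iteration guarantees this after finitely many iterations, so the delay is also absorbed into \(\bar K\). Exit still holds in the meantime, since \(L_{\mathrm{cur}}\) stays bounded within an iteration.
\item The well-definedness of \(\tau_k\) that you flag is automatic. Every trial has \(M_k\ge2\mu\) and \(L_{\mathrm{cur}}\ge\mu\), so \(\mu/\sqrt{M_kL_{\mathrm{cur}}}\le1/\sqrt2\).
\end{itemize}
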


\begin{restatable}[Iteration complexity]{corollary}{IterationComplexity}
\label{cor:iteration_complexity}
Under \cref{thm:eventual_bldf_rate}, \(\phi_k^{\mathrm{best}}-\phi_\star\le\varepsilon\) after
\(N\) iterations from \(\bar K\):
\[
N=O\!\left(\rho\sqrt{\frac{L\,D_d(x_\star,x_0)}{\varepsilon}}\right)\quad(\mu=0),
\qquad
N=O\!\left(\rho^2\sqrt{\frac{L}{\mu}}\,
\log\frac{\eta_{\bar K}^{-1}D_d(x_\star,x_0)}{\varepsilon}\right)\quad(\mu>0).
\]
\end{restatable}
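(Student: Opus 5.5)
The corollary follows by inverting the two bounds of \cref{thm:eventual_bldf_rate}, combined with \(\phi_k^{\mathrm{best}}-\phi_\star\le\eta_k^{-1}D_d(x_\star,x_0)\). Write \(k=\bar K+N\) and set \(D:=D_d(x_\star,x_0)\). It suffices to find \(N\) such that \(\eta_{\bar K+N}^{-1}D\le\varepsilon\). We treat the two regimes separately.

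\textbf{Case \(\mu=0\).} The theorem gives
\[
\eta_k^{-1}D\le\frac{\eta_{\bar K}^{-1}D}{\bigl(1+\frac{N}{4\rho\sqrt{L\eta_{\bar K}}}\bigr)^2}.
\]
We drop the \(1\) in the denominator, using \((1+a)^2\ge a^2\). This gives
\[
\eta_k^{-1}D\le \eta_{\bar K}^{-1}D\cdot\frac{16\rho^2L\eta_{\bar K}}{N^2}=\frac{16\rho^2LD}{N^2}.
\]
The factor \(\eta_{\bar K}\) cancels, which is the key observation: the bound no longer depends on the state at \(\bar K\). Choosing \(N\ge 4\rho\sqrt{LD/\varepsilon}\) then gives \(\phi_k^{\mathrm{best}}-\phi_\star\le\varepsilon\). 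This is the claimed \(O(\rho\sqrt{LD/\varepsilon})\).

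\textbf{Case \(\mu>0\).} Set \(q:=\sqrt{\mu/(4\rho^4L)}=\frac{1}{2\rho^2}\sqrt{\mu/L}\). We first check that \(q\in(0,1]\), so that the bound is meaningful. This holds because \(\mu<L\) and \(\rho\ge1\), since \(\rho_{d^*}\ge1\) by taking \(w_1=w_2\) in its definition. Next we use \(1-q\le e^{-q}\), which gives
\[
\eta_k^{-1}D\le \eta_{\bar K}^{-1}D\,e^{-qN}.
\]
This is at most \(\varepsilon\) as soon as
\[
N\ge q^{-1}\log\bigl(\eta_{\bar K}^{-1}D/\varepsilon\bigr)=2\rho^2\sqrt{L/\mu}\,\log\bigl(\eta_{\bar K}^{-1}D/\varepsilon\bigr).
\]
This is the claimed bound. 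If \(\eta_{\bar K}^{-1}D\le\varepsilon\) already, then \(N=0\) suffices, and the logarithm can be read as \(\max\{0,\cdot\}\).

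There is no real obstacle in either case. The only points needing care are that the constant \(4\rho\) in the \(\mu=0\) denominator cancels cleanly against \(\eta_{\bar K}\), and that \(q\le 1\) in the strongly convex case so that the contraction factor is valid. The existence of a finite \(\bar K\) is supplied by \cref{thm:eventual_bldf_rate} and is taken as given.
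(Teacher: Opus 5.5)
Your proposal is correct and matches the paper's proof essentially step for step. In the case \(\mu=0\) you drop the \(1\) in the denominator so that \(\eta_{\bar K}\) cancels and obtain \(N\ge 4\rho\sqrt{LD/\varepsilon}\). In the case \(\mu>0\) your inequality \(1-q\le e^{-q}\) is the paper's \(\log\frac{1}{1-q}\ge q\), and it gives \(q^{-1}=2\rho^2\sqrt{L/\mu}\).
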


When \(\rho=1\), these bounds match the iteration complexities of Nesterov's fast gradient
method up to an absolute constant; \cref{app:euclidean_nesterov} describes the Euclidean
specialization.

\paragraph{Oracle complexity.} Backtracking incurs constant amortized oracle overhead per
accepted iteration, up to logarithmic terms for underestimated
initial parameters. The complexity above therefore translates into
an oracle complexity bound with these additional terms;
see \cref{sec:oracle_complexity}.

\subsection{Comparison with existing methods}\label{sec:comparison}

\paragraph{Inputs.} The rates of \cref{thm:eventual_bldf_rate} depend on \(\rho\), but ABrA-GD uses neither \(\rho\)
nor \(\Omega_k\): it finds \(M_k\) by backtracking on the exit condition, and the only problem
constant it takes as input is \(\mu\). The accelerated linear rate of \citet{chen2026accelerated}
requires the constant of their condition as input. To our knowledge, \cref{thm:eventual_bldf_rate}
gives the first accelerated linear rate for relatively smooth and relatively strongly convex
objectives obtained by a method with these inputs. \Cref{tab:method_comparison} summarizes the
comparison.

\paragraph{Triangle scaling and a posteriori bounds.} ABPG converges at rate \(O(k^{-\gamma})\) under a triangle-scaling property with exponent
\(\gamma\) and gain \(1\) \citep{hanzely2021accelerated,savchuk2024accelerated}. For standard
non-Euclidean kernels, this exponent is at most \(1\): \(\gamma=1\) for jointly convex divergences, and \(\gamma\le1/2\) for the Burg kernel (\cref{app:comparison}). This
guarantee therefore gives no acceleration, and some additional condition is necessary, since
\(O(1/k)\) is optimal for the full relatively smooth class \citep{dragomir2022optimal}. ABrA-GD
shares this limit: if \(M_k\) grows without bound, \(y_k\) approaches \(x_k^{\mathrm{best}}\), the
iterations approach Bregman gradient steps, and the safeguard of \cref{app:numexp} retains the
\(O(1/k)\) rate.

The accelerated variants instead fix \(\gamma=2\) and
backtrack on the gain, so their rate \(O(\bar G_k/k^2)\) depends on the accepted gains and holds
a posteriori, as in \cref{thm:generic_accelerated_rate_Mk}. \textbf{Bounded distortion is
implied by a uniform gain}: if triangle scaling holds with exponent \(2\) and a gain \(G\) uniform
over triples, then \(\rho_{d^*}^2\le G\) on the corresponding dual region (\cref{app:tsp}).
Hence, whenever ABPG has a uniform gain, \cref{thm:eventual_bldf_rate} also gives ABrA-GD an
accelerated rate.

\begin{table}[ht]
\centering
\small
\begin{tabular}{lcccc}
\toprule
\textbf{Method} &
\textbf{Smooth. adaptive} &
\textbf{Geometry adaptive} &
\textbf{Convex rate} &
\textbf{Acc. $\mu>0$} \\
\midrule
BPG / BPG-LS &
\checkmark\ (BPG-LS) &
-- &
$O(1/k)$ &
-- \\
ABPG &
-- &
-- &
$O(k^{-\gamma})$, global $\gamma$ &
-- \\
ABPG-e &
-- &
\checkmark\ (exponent search) &
$O(k^{-\gamma_k})$ &
-- \\
ABPG-g &
-- &
\checkmark\ (gain search) &
$O(\bar G_k/k^2)$ &
-- \\
Acc-MD (GCS) &
-- &
-- &
$O(C/k^2)$ &
\checkmark \\
ABrA-GD &
\checkmark &
\checkmark\ (via $M_k$) &
$O\!\left((\sum_i M_i^{-1/2})^{-2}\right)$ &
\checkmark \\
\bottomrule
\end{tabular}

\caption{Comparison in the relatively smooth setting. BPG-LS denotes Bregman proximal
gradient (-LS: with line search). ABPG-e uses an exponent search while ABPG-g uses a gain search. Acc-MD
\citep{chen2026accelerated} takes the constant \(C\) of its generalized Cauchy--Schwarz condition
as input.}
\label{tab:method_comparison}
\end{table}
\section{Numerical Experiments}
\label{sec:experiments}

\begin{figure}[!t]
    \centering
    \includegraphics[width=0.85\linewidth]{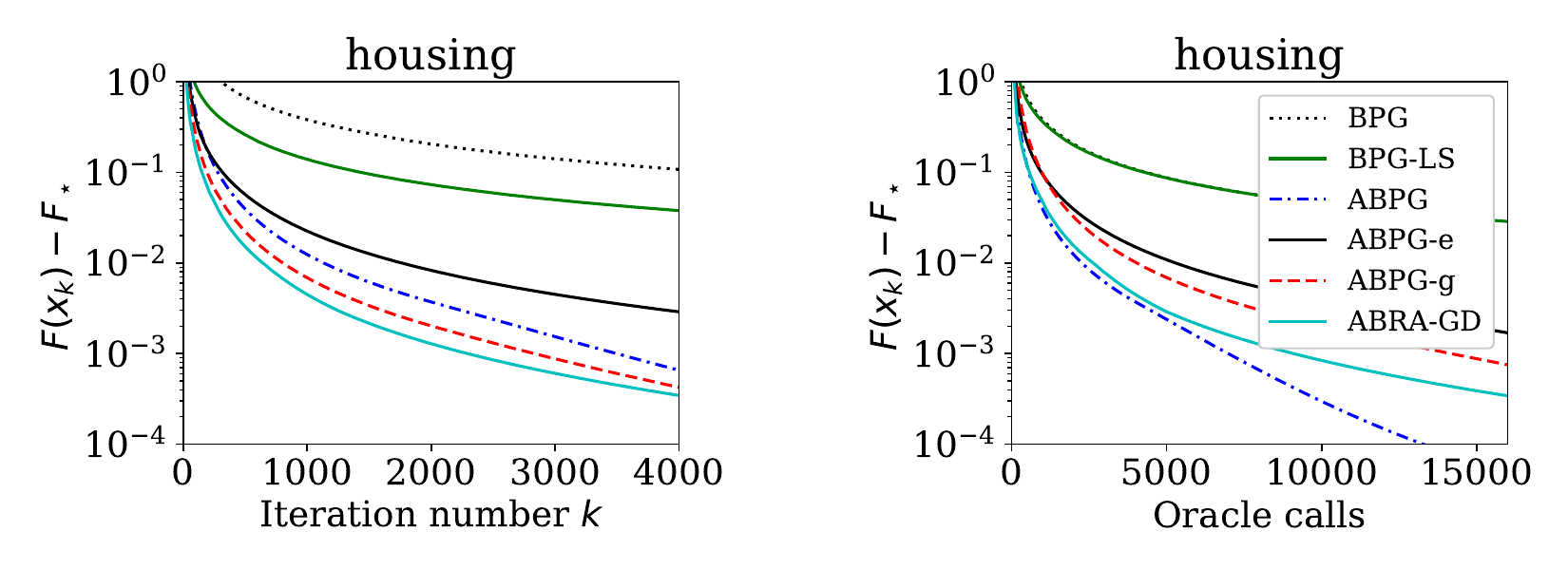}
    \includegraphics[width=0.85\linewidth]{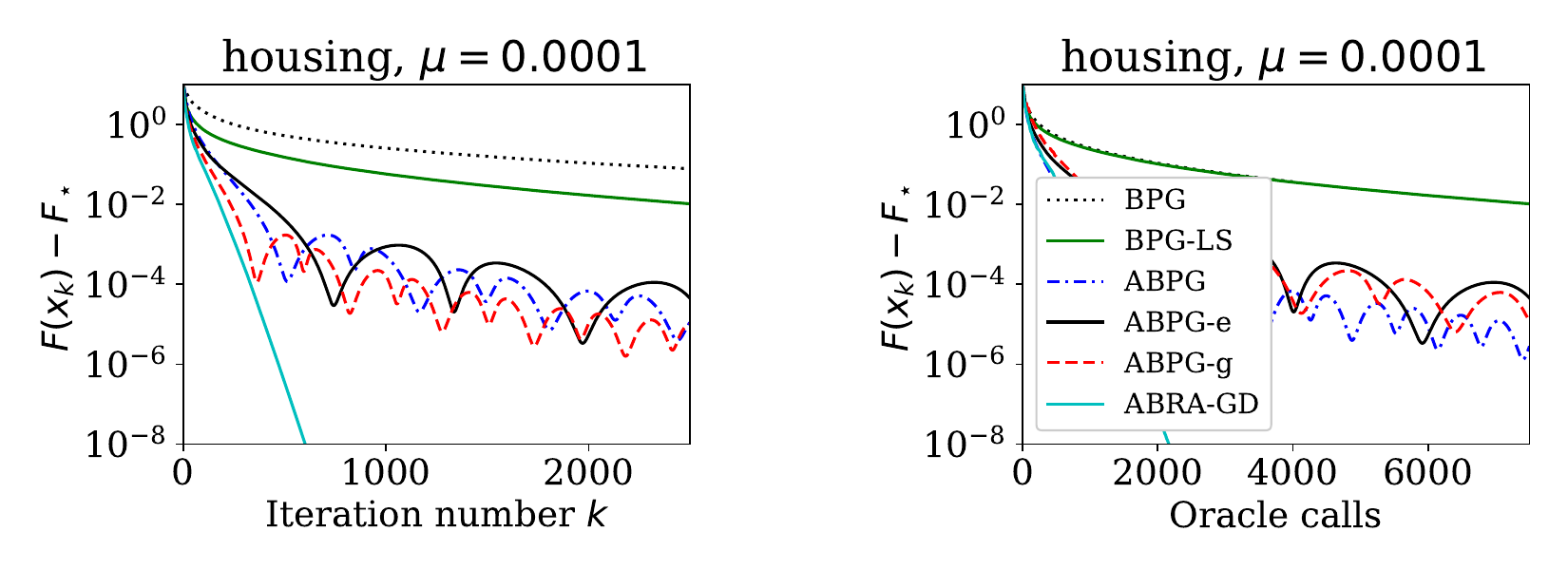}
    \caption{D-optimal design on the housing dataset. Panels show the objective residual
    \(F-F_\star\) against iterations and oracle calls. Methods are BPG, BPG with line search
    (BPG-LS), ABPG, its exponent-adaptive and gain-adaptive variants (ABPG-e and ABPG-g), and
    ABrA-GD. ABPG and ABPG-g use \(\gamma=2\). Top: convex case (\(\mu=0\)), where ABrA-GD
    performs similarly to the best ABPG variants. Bottom: relatively strongly convex case
    (\(\mu=10^{-4}\)), where ABrA-GD reaches a lower residual and its log-residual decreases
    approximately linearly. Additional datasets and accepted parameters are reported in
    \cref{fig:dopt_convex_bundle,fig:dopt_strcvx_bundle}; the \(\gamma=1\) comparison appears in
    \cref{fig:dopt_convex_bundle_gamma1}.}
    \label{fig:numex_main}
\end{figure}

We evaluate ABrA-GD on D-optimal design and Poisson linear inverse problems. The formulations,
reference functions, datasets, baselines, and initialization follow
\citet{hanzely2021accelerated}. Further protocol details are given in \cref{app:numexp_protocol},
and additional comparisons appear in \cref{app:numexp}.
Oracle calls count first-order oracle calls, including those of trials rejected by
backtracking.

For D-optimal design, we use \(\gamma=2\) in the ABPG comparisons, following the experimental
choices of \citet{hanzely2021accelerated}. With \(\gamma=1\), ABPG has only an \(O(1/k)\)
guarantee and converged more slowly than BPG in their experiments; \cref{fig:dopt_convex_bundle_gamma1}
reproduces this comparison.

\Cref{fig:numex_main} shows the housing dataset. In the convex case, ABrA-GD improves on BPG and
BPG-LS and performs similarly to the best ABPG variants over the plotted range. With
\(\mu=10^{-4}\), its log-residual decreases approximately linearly, and it reaches a lower
residual with fewer oracle calls than the displayed baselines. The other three D-optimal design
datasets show the same pattern (\cref{app:numexp}).
\Cref{fig:dopt_convex_bundle,fig:dopt_strcvx_bundle} also plot the accepted \(M_k\) values. The D-optimal design instances are scaled so that \(L=1\) \citep{hanzely2021accelerated}, and on
these instances \(L_{\mathrm{cur}}\) stays between \(1/4\) and \(2\).

On the Poisson problems, ABrA-GD improves on the nonaccelerated baselines. Its performance relative
to the accelerated methods depends on the instance and variant. We report Poisson L1 with
\((m,n)=(200,100)\) and Poisson L2 with \((m,n)=(100,1000)\) in \cref{app:numexp}.
\section{Conclusion}
\label{sec:conclusion}

ABrA-GD combines a Bregman gradient step with a dual lower model to satisfy an envelope descent
requirement. Its convergence bound depends on the acceleration parameters accepted during the run.
The dual BLDF measures the local geometric distortion in the comparison between the two steps,
but the algorithm does not need to evaluate it.

Under bounded distortion, the analysis gives an
accelerated \(O(1/k^2)\) rate for convex objectives and an accelerated linear rate for relatively
strongly convex objectives. In the D-optimal design experiments, ABrA-GD performs similarly to
adaptive ABPG in the convex case and reaches lower residuals in the relatively strongly convex
case. Performance on the Poisson problems depends on the instance.

Future directions include sharper local distortion bounds, an accelerated analysis for composite objectives (\cref{app:composite_subgradient}), and stochastic variants.

\label{endmaintext}

\clearpage
\bibliographystyle{plainnat}
\bibliography{biblio}

\begin{thebibliography}{26}
\providecommand{\natexlab}[1]{#1}
\providecommand{\url}[1]{\texttt{#1}}
\expandafter\ifx\csname urlstyle\endcsname\relax
  \providecommand{\doi}[1]{doi: #1}\else
  \providecommand{\doi}{doi: \begingroup \urlstyle{rm}\Url}\fi

\bibitem[Allen-Zhu and Orecchia(2014)]{allen2014linear}
Zeyuan Allen-Zhu and Lorenzo Orecchia.
\newblock Linear coupling: An ultimate unification of gradient and mirror
  descent.
\newblock \emph{arXiv preprint arXiv:1407.1537}, 2014.

\bibitem[Auslender and Teboulle(2006)]{auslender2006interior}
Alfred Auslender and Marc Teboulle.
\newblock Interior gradient and proximal methods for convex and conic
  optimization.
\newblock \emph{SIAM Journal on Optimization}, 16\penalty0 (3):\penalty0
  697--725, 2006.

\bibitem[Bach(2015)]{bach2015duality}
Francis Bach.
\newblock Duality between subgradient and conditional gradient methods.
\newblock \emph{SIAM Journal on Optimization}, 25\penalty0 (1):\penalty0
  115--129, 2015.

\bibitem[Bauschke et~al.(2017)Bauschke, Bolte, and
  Teboulle]{bauschke2017descent}
Heinz~H. Bauschke, J{\'e}r{\^o}me Bolte, and Marc Teboulle.
\newblock A descent lemma beyond lipschitz gradient continuity: first-order
  methods revisited and applications.
\newblock \emph{Mathematics of Operations Research}, 42\penalty0 (2):\penalty0
  330--348, 2017.

\bibitem[Bauschke et~al.(2018)Bauschke, Dao, and
  Lindstrom]{bauschke2018regularizing}
Heinz~H Bauschke, Minh~N Dao, and Scott~B Lindstrom.
\newblock Regularizing with bregman--moreau envelopes.
\newblock \emph{SIAM Journal on Optimization}, 28\penalty0 (4):\penalty0
  3208--3228, 2018.

\bibitem[Bregman(1967)]{bregman1967relaxation}
Lev~M Bregman.
\newblock The relaxation method of finding the common point of convex sets and
  its application to the solution of problems in convex programming.
\newblock \emph{USSR computational mathematics and mathematical physics},
  7\penalty0 (3):\penalty0 200--217, 1967.

\bibitem[Censor and Lent(1981)]{censor1981iterative}
Yair Censor and Arnold Lent.
\newblock An iterative row-action method for interval convex programming.
\newblock \emph{Journal of Optimization theory and Applications}, 34\penalty0
  (3):\penalty0 321--353, 1981.

\bibitem[Censor and Zenios(1992)]{censor1992proximal}
Yair Censor and Stavros~Andrea Zenios.
\newblock Proximal minimization algorithm with d-functions.
\newblock \emph{Journal of Optimization Theory and Applications}, 73\penalty0
  (3):\penalty0 451--464, 1992.

\bibitem[Chang and Lin(2011)]{chang2011libsvm}
Chih-Chung Chang and Chih-Jen Lin.
\newblock {LIBSVM}: A library for support vector machines.
\newblock \emph{ACM Transactions on Intelligent Systems and Technology},
  2\penalty0 (3):\penalty0 1--27, 2011.

\bibitem[Chen et~al.(2026)Chen, Luo, Wei, Xu, and Yao]{chen2026accelerated}
Long Chen, Hao Luo, Jingrong Wei, Zeyi Xu, and Yuan Yao.
\newblock Accelerated mirror descent method through variable and operator
  splitting.
\newblock \emph{arXiv preprint arXiv:2601.19038}, 2026.

\bibitem[Diakonikolas and Orecchia(2019)]{diakonikolas2019approximate}
Jelena Diakonikolas and Lorenzo Orecchia.
\newblock The approximate duality gap technique: A unified theory of
  first-order methods.
\newblock \emph{SIAM Journal on Optimization}, 29\penalty0 (1):\penalty0
  660--689, 2019.

\bibitem[Dragomir et~al.(2022)Dragomir, Taylor, d’Aspremont, and
  Bolte]{dragomir2022optimal}
Radu-Alexandru Dragomir, Adrien~B Taylor, Alexandre d’Aspremont, and
  J{\'e}r{\^o}me Bolte.
\newblock Optimal complexity and certification of bregman first-order methods.
\newblock \emph{Mathematical Programming}, 194\penalty0 (1):\penalty0 41--83,
  2022.

\bibitem[Hanzely et~al.(2021)Hanzely, Richtarik, and
  Xiao]{hanzely2021accelerated}
Filip Hanzely, Peter Richtarik, and Lin Xiao.
\newblock Accelerated bregman proximal gradient methods for relatively smooth
  convex optimization.
\newblock \emph{Computational Optimization and Applications}, 79\penalty0
  (2):\penalty0 405--440, 2021.

\bibitem[Karimireddy et~al.(2018)Karimireddy, Stich, and
  Jaggi]{karimireddy2018global}
Sai~Praneeth Karimireddy, Sebastian~U. Stich, and Martin Jaggi.
\newblock Global linear convergence of {N}ewton's method without
  strong-convexity or {L}ipschitz gradients.
\newblock \emph{arXiv preprint arXiv:1806.00413}, 2018.

\bibitem[Lemar{\'e}chal and Sagastiz{\'a}bal(1997)]{lemarechal1997practical}
Claude Lemar{\'e}chal and Claudia Sagastiz{\'a}bal.
\newblock Practical aspects of the moreau--yosida regularization: Theoretical
  preliminaries.
\newblock \emph{SIAM journal on optimization}, 7\penalty0 (2):\penalty0
  367--385, 1997.

\bibitem[Lin et~al.(2015)Lin, Mairal, and Harchaoui]{lin2015universal}
Hongzhou Lin, Julien Mairal, and Zaid Harchaoui.
\newblock A universal catalyst for first-order optimization.
\newblock \emph{Advances in neural information processing systems}, 28, 2015.

\bibitem[Lin et~al.(2018)Lin, Mairal, and Harchaoui]{lin2018catalyst}
Hongzhou Lin, Julien Mairal, and Zaid Harchaoui.
\newblock Catalyst acceleration for first-order convex optimization: from
  theory to practice.
\newblock \emph{Journal of Machine Learning Research}, 18\penalty0
  (212):\penalty0 1--54, 2018.

\bibitem[Liu and Liu(2022)]{liu2022dual}
Jin-Zan Liu and Xin-Wei Liu.
\newblock A dual bregman proximal gradient method for relatively-strongly
  convex optimization.
\newblock \emph{Numerical Algebra, Control and Optimization}, 12\penalty0
  (4):\penalty0 679--692, 2022.
\newblock \doi{10.3934/naco.2021028}.

\bibitem[Lu et~al.(2018)Lu, Freund, and Nesterov]{lu2018relatively}
Haihao Lu, Robert~M Freund, and Yurii Nesterov.
\newblock Relatively smooth convex optimization by first-order methods, and
  applications.
\newblock \emph{SIAM Journal on Optimization}, 28\penalty0 (1):\penalty0
  333--354, 2018.

\bibitem[Monteiro and Svaiter(2013)]{monteiro2013accelerated}
Renato~DC Monteiro and Benar~Fux Svaiter.
\newblock An accelerated hybrid proximal extragradient method for convex
  optimization and its implications to second-order methods.
\newblock \emph{SIAM Journal on Optimization}, 23\penalty0 (2):\penalty0
  1092--1125, 2013.

\bibitem[Nesterov(2004)]{nesterov2004introductory}
Yurii Nesterov.
\newblock \emph{Introductory lectures on convex optimization}.
\newblock Springer, 2004.

\bibitem[Nesterov and Polyak(2006)]{nesterov2006cubic}
Yurii Nesterov and Boris~T Polyak.
\newblock Cubic regularization of {N}ewton method and its global performance.
\newblock \emph{Mathematical Programming}, 108\penalty0 (1):\penalty0 177--205,
  2006.

\bibitem[Rockafellar(1997)]{rockafellar1997convex}
R~Tyrrell Rockafellar.
\newblock \emph{Convex analysis}, volume~28.
\newblock Princeton university press, 1997.

\bibitem[Savchuk et~al.(2024)Savchuk, Alkousa, Shushko, Vyguzov, Stonyakin,
  Pasechnyuk, and Gasnikov]{savchuk2024accelerated}
OS~Savchuk, MS~Alkousa, AS~Shushko, AA~Vyguzov, FS~Stonyakin, DA~Pasechnyuk,
  and AV~Gasnikov.
\newblock Accelerated bregman gradient methods for relatively smooth and
  relatively lipschitz continuous minimization problems.
\newblock \emph{arXiv preprint arXiv:2411.16743}, 2024.

\bibitem[Tseng(2008)]{tseng2008accelerated}
Paul Tseng.
\newblock On accelerated proximal gradient methods for convex-concave
  optimization.
\newblock \emph{submitted to SIAM Journal on Optimization}, 2\penalty0 (3),
  2008.

\bibitem[Wilson et~al.(2021)Wilson, Recht, and Jordan]{wilson2021lyapunov}
Ashia~C Wilson, Ben Recht, and Michael~I Jordan.
\newblock A lyapunov analysis of accelerated methods in optimization.
\newblock \emph{Journal of Machine Learning Research}, 22\penalty0
  (113):\penalty0 1--34, 2021.

\end{thebibliography}

\clearpage
\appendix

\tableofcontents

\section{Notations and definitions}\label{app:defs}

\begin{definition}[Legendre function]
A function $d$ is \emph{Legendre} if it is proper, closed and convex, and is both
\emph{essentially smooth} and \emph{essentially strictly convex}: it is differentiable on
$\operatorname{int}(\operatorname{dom} d)$ with
$\|\nabla d(x_k)\|\to\infty$ whenever $x_k\to\partial\operatorname{dom} d$, and it is
strictly convex on every convex subset of $\operatorname{dom}\partial d$. Essential strict
convexity is what makes $\nabla d$ injective.
In this case, $\nabla d$ is a bijection between
$\operatorname{int}(\operatorname{dom} d)$ and
$\operatorname{int}(\operatorname{dom} d^*)$ with inverse $\nabla d^*$.
\end{definition}

\begin{definition}[Bregman Divergence \citep{bregman1967relaxation}]
The \emph{Bregman divergence} of a Legendre function $d$ is
\[
D_d(x,y)
:=
d(x)-d(y)-\langle \nabla d(y),\, x-y\rangle,
\qquad x\in \operatorname{dom} d,\; y\in \operatorname{int}(\mathcal{X}).
\]
\end{definition}

\begin{definition}[Relative smoothness and strong convexity] \label{def:rel_smooth_str_cvx} We say that $f$ is $\mu$-strongly convex (convex if $\mu = 0$) and $L$-smooth relative to a Legendre function $d$ if there exists $0\leq \mu\leq L$ s.t.
\[
\mu D_d(x,y) \le f(x) - f(y) - \langle \nabla f(y),x-y\rangle \le LD_d(x,y)
\qquad \forall x,y\in\operatorname{int}(\mathcal{X}).
\]
\end{definition}

\begin{definition}[Fenchel--Legendre conjugate {\citep[\S12]{rockafellar1997convex}}]
Let $f$ be a proper function.
Its conjugate is
\[
f^*(\lambda)
:=
\sup_{x\in\mathbb{R}^n}
\{\langle \lambda,x\rangle - f(x)\}.
\]
\end{definition}

\begin{definition}[Primal and dual local norms]\label{eq:primal_dual_norm}
Let $d$ be a twice differentiable Legendre function whose Hessian is positive definite on
$\operatorname{int}(\dom d)$. For
$x \in \operatorname{int}(\dom d)$,
$w \in \operatorname{int}(\dom d^*)$,
$v \in \mathcal X$, and $s \in \mathcal X^*$, define
\[
    \|v\|_x^2 := \tfrac{1}{2}\langle \nabla^2 d(x)\,v,\, v\rangle,
    \qquad
    \|s\|_{w,*}^2 := \tfrac{1}{2}\langle s,\, \nabla^2 d^*(w)\,s\rangle.
\]
\end{definition}

Positive definiteness is a hypothesis, not a consequence of the rest: \(d(x)=x^4/4\) is Legendre and
twice differentiable on \(\mathbb R\) with \(d''(0)=0\), so the first form is degenerate at the
origin and \(d^*\) has no finite second derivative there. Under the hypothesis,
\(\nabla^2d^*(w)=\nabla^2d(\nabla d^*(w))^{-1}\) is positive definite as well and both forms are
norms. Every statement below that uses local norms or ratios of Hessians carries this hypothesis on
the region where it is applied.

The Bregman divergence admits an integral representation in terms of the local norms.
\begin{proposition}[Bregman divergence and local norms]\label{prop:bregman_local_norm}
Let $d$ be a Legendre function of class $C^2$ with positive definite Hessian on
$\operatorname{int}(\dom d)$. For any $x_1,x_2\in \operatorname{int}(\dom d)$, there is a point $z$ on the segment $[x_1,x_2]$ such that
\[
D_d(x_1,x_2)=\|x_1-x_2\|_{z}^{2}.
\]
\end{proposition}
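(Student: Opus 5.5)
The plan is to use Taylor's theorem with integral remainder along the segment and then convert the integral average of Hessian quadratic forms into a single point by an intermediate value argument. Set \(v:=x_1-x_2\) and \(g(\theta):=d(x_2+\theta v)\) for \(\theta\in[0,1]\). Since \(\operatorname{int}(\dom d)\) is convex, the whole segment \([x_1,x_2]\) lies in it. Because \(d\) is \(C^2\) there, \(g\) is \(C^2\) on \([0,1]\) with
\[
g'(\theta)=\langle\nabla d(x_2+\theta v),v\rangle,
\qquad
g''(\theta)=\langle\nabla^2 d(x_2+\theta v)v,v\rangle .
\]
By definition, \(D_d(x_1,x_2)=g(1)-g(0)-g'(0)\).

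The first step is the Taylor formula with integral remainder:
\[
g(1)-g(0)-g'(0)=\int_0^1(1-\theta)\,g''(\theta)\,d\theta .
\]

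The second step is the mean value theorem for integrals with the nonnegative weight \((1-\theta)\), whose integral is \(1/2\). Since \(g''\) is continuous on the compact interval \([0,1]\), there is \(\bar\theta\in[0,1]\) with
\[
\int_0^1(1-\theta)g''(\theta)\,d\theta=\tfrac12\,g''(\bar\theta).
\]
To justify this, note that the integral lies between \(\tfrac12\min g''\) and \(\tfrac12\max g''\), and apply the intermediate value theorem to the continuous function \(g''\).

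Setting \(z:=x_2+\bar\theta v\in[x_1,x_2]\), we get
\[
D_d(x_1,x_2)=\tfrac12\langle\nabla^2d(z)v,v\rangle=\|x_1-x_2\|_z^2
\]
by the definition of the local norm in \cref{eq:primal_dual_norm}. Positive definiteness is not needed for the identity itself; it only guarantees that \(\|\cdot\|_z\) is a genuine norm.

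The main point to check is the regularity: that \(g\) is \(C^2\) up to the endpoints and that the weighted mean value step is legitimate. Both follow from the segment lying in the open set \(\operatorname{int}(\dom d)\) and from the continuity of \(\nabla^2 d\) there. If continuity of the Hessian were not assumed, I would instead use the Lagrange form of Taylor's theorem, \(g(1)=g(0)+g'(0)+\tfrac12g''(\bar\theta)\), which needs only twice differentiability. That form yields the same conclusion directly.
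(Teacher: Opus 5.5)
Your proof is correct and takes essentially the paper's route: restrict \(d\) to the segment through \(g(\theta)=d\bigl(x_2+\theta(x_1-x_2)\bigr)\) and extract a single intermediate point from a Taylor remainder. The paper applies the Lagrange form \(g(1)=g(0)+g'(0)+\tfrac12 g''(\theta)\) directly, which is your fallback; your main argument instead uses the integral remainder and the weighted mean value theorem, which relies on the continuity of \(\nabla^2 d\) supplied by the \(C^2\) hypothesis.
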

\begin{proof}
Since $\operatorname{int}(\dom d)$ is convex, the segment $[x_1,x_2]$ lies in it, and $g(t):=d\big(x_2+t(x_1-x_2)\big)$ is twice continuously differentiable on a neighbourhood of $[0,1]$. Taylor's theorem with the Lagrange remainder gives a $\theta\in(0,1)$ with $g(1)=g(0)+g'(0)+\tfrac{1}{2}g''(\theta)$, that is, writing $z:=x_2+\theta(x_1-x_2)$,
\[
d(x_1)-d(x_2)-\langle \nabla d(x_2),\,x_1-x_2\rangle
=
\tfrac{1}{2}\big\langle \nabla^2 d(z)(x_1-x_2),\,x_1-x_2\big\rangle.
\]
The left-hand side is $D_d(x_1,x_2)$ and the right-hand side is $\|x_1-x_2\|_z^2$ by \cref{eq:primal_dual_norm}.
\end{proof}

\subsection{The anchored decomposition}\label{app:anchored_decomposition}

For a differentiable \(g\), write
\[
D_g(x,y):=g(x)-g(y)-\langle\nabla g(y),x-y\rangle ,
\]
which is the Bregman divergence already defined for \(d\). In this notation, \(g\) is \(L\)-smooth
relative to \(d\) when \(D_g\le LD_d\) and \(\mu\)-strongly convex relative to \(d\) when
\(D_g\ge\mu D_d\), both on \(\operatorname{int}(\dom d)\).

The anchor \(x\mapsto\mu D_d(x,x_0)\) differs from \(\mu d\) by an affine function, and an affine
function has zero Bregman divergence. So the anchor contributes exactly \(\mu D_d\), and
\(\phi=f+\mu D_d(\cdot,x_0)\) satisfies
\begin{equation}\label{eq:anchor_divergence_split}
D_\phi(x,y)=D_f(x,y)+\mu D_d(x,y),
\qquad x,y\in\operatorname{int}(\dom d).
\end{equation}

Let \(\phi\) be \(L\)-smooth and \(\mu\)-strongly convex relative to \(d\), that is
\(\mu D_d\le D_\phi\le LD_d\). Strong convexity says precisely that \(\phi-\mu D_d(\cdot,x_0)\) is
convex, so put
\[
f:=\phi-\mu D_d(\cdot,x_0).
\]
Subtracting \(\mu D_d\) throughout \cref{eq:anchor_divergence_split} gives
\(0\le D_f\le(L-\mu)D_d\): the function \(f\) is convex and \((L-\mu)\)-smooth relative to \(d\),
which is \cref{ass:setting}, and \(\phi\) is back in the form \cref{eq:phi_objective_boxed}.

Conversely, if \(f\) satisfies \cref{ass:setting} then \(0\le D_f\le(L-\mu)D_d\), and
\cref{eq:anchor_divergence_split} returns \(\mu D_d\le D_\phi\le LD_d\).

The two hypotheses are therefore the same one. All of the relative strong convexity of \(\phi\) sits
in the anchor term, and the anchor is what raises the relative smoothness constant from \(L-\mu\)
to \(L\).

\subsection{Affine equality constraints}\label{app:affine_restriction}

The analysis also applies when the feasible set includes affine equality
constraints. Let \(\mathcal A\subseteq\mathbb R^n\) be an affine subspace,
assume that \(x_0\in\mathcal A\cap\operatorname{int}(\dom d)\), and consider
the feasible set
\[
    \mathcal X=\operatorname{cl}(\dom d)\cap\mathcal A.
\]
We can eliminate the equality constraints by choosing coordinates on
\(\mathcal A\).

Let the columns of \(U\) form an orthonormal basis for the direction
space of \(\mathcal A\). Every \(x\in\mathcal A\) has a unique
representation \(x=x_0+Uu\). Define
\[
    \widetilde f(u)=f(x_0+Uu),
    \qquad
    \widetilde d(u)=d(x_0+Uu).
\]
The Bregman divergence is preserved under this change of variables:
\[
    D_{\widetilde d}(u,v)
    =D_d(x_0+Uu,x_0+Uv).
\]
The constrained problem therefore becomes
\[
    \min_{u\in\operatorname{cl}(\dom\widetilde d)}
    \widetilde f(u)+\mu D_{\widetilde d}(u,0).
\]
The relative smoothness and strong convexity inequalities carry over
directly, with the same constants \(L\) and \(\mu\).

The restricted kernel \(\widetilde d\) is Legendre. To see this, write
\(V=\operatorname{range}(U)\). Since \(\mathcal A\) intersects
\(\operatorname{int}(\dom d)\), the subdifferential sum rule gives
\[
    \partial(d+\delta_{\mathcal A})(x)
    =\partial d(x)+V^\perp
\]
\citep[Thm.~23.8]{rockafellar1997convex}. At a boundary point of the
restricted domain, relative to \(\mathcal A\), the right-hand side is
empty because \(d\) is Legendre. This gives essential smoothness in the
reduced coordinates. Essential strict convexity is inherited from \(d\).

Thus the problem in the variable \(u\) satisfies
\cref{ass:setting}. All updates and proofs apply in these coordinates,
so affine equality constraints require no change to the analysis.
\clearpage
\section{Recovering Bregman Gradient Descent and Dual Averaging}
\label{app:envelope_special_cases}

The implicit envelope of \cref{sec:envelope} specializes to the two classical methods. Both
derivations are given here: they motivate the accelerated step of \cref{algo:bregman_adaptive}
but are not needed to read it.

\subsection{Mirror Gradient Descent}

Since $f$ is relatively smooth, for all $x_{k}$, we have the following upper model $\mathcal{U}(x,x_k)$ for $f$,
\[
    f(x) \leq \mathcal{U}(x,x_k) := f(x_k) + \langle \nabla f(x_k),x-x_k\rangle + (L-\mu)D_d(x,x_k).
\]
By definition of $\phi$, it is sufficient to find an iterate $x_{k+1}$ and a stepsize $\eta_{k+1}$ such that 
\[
    \mathcal{U}(x_{k+1},x_{k}) + \mu D_d(x_{k+1},x_0) \leq \phi_\star^{\eta_{k+1}}
\]
Naturally, we can take the minimum of the upper bound and adjust $\eta_{k+1}$ accordingly,
\begin{equation*}
    x_{k+1} = \argmin_x \mathcal{U}(x,x_{k}) + \mu D_d(x,x_0) = \nabla d^*\Big( \nabla d(x_k) - \frac{1}{L}\nabla \phi(x_k) \Big). \label{eq:gradient_descent}
\end{equation*}
This is exactly gradient descent, and it fits \cref{algo:implicit_descent} with a certain schedule for $\eta_k$.

\begin{proposition}\label{prop:rate_grad}
The Bregman gradient descent iterates satisfy \cref{algo:implicit_descent} with
\[
\eta_{k}^{-1}
=
L\min\left\{
\frac{1}{k},
\frac{\kappa}{(1+\kappa)^{k}-1}
\right\},
\qquad
\kappa=\frac{\mu}{L},
\]
where the second term is omitted when \(\mu=0\).
\end{proposition}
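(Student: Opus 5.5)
We need to prove that the Bregman gradient iterates $x_{k+1}=\nabla d^*(\nabla d(x_k)-L^{-1}\nabla\phi(x_k))$ satisfy $\phi(x_k)\le\phi_\star^{\eta_k}$ with the stated $\eta_k$. The route is the standard Lyapunov/three-point argument for Bregman gradient descent. We compare against an arbitrary $x$ and then minimize over $x$ the regularized objective $\phi(x)+\eta_k^{-1}D_d(x,x_0)$.

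\textbf{Per-step inequality.} First I establish the one-step inequality, valid for every $x\in\operatorname{int}(\dom d)$:
\[
\phi(x_{k+1})\le \phi(x)+(L-\mu)D_d(x,x_k)-LD_d(x,x_{k+1}).
\]
It follows from three facts:
\begin{itemize}
\item relative $L$-smoothness of $\phi$ (\cref{app:anchored_decomposition}) applied at $x_k$;
\item the optimality condition of the mirror step, $\nabla\phi(x_k)=L(\nabla d(x_k)-\nabla d(x_{k+1}))$;
\item the three-point identity $\langle\nabla d(x_k)-\nabla d(x_{k+1}),x-x_{k+1}\rangle=D_d(x,x_{k+1})+D_d(x_{k+1},x_k)-D_d(x,x_k)$.
\end{itemize}
After these substitutions, the term $D_d(x_{k+1},x_k)$ cancels, and relative $\mu$-strong convexity, $\phi(x)\ge\phi(x_k)+\langle\nabla\phi(x_k),x-x_k\rangle+\mu D_d(x,x_k)$, produces the factor $(L-\mu)$. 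Taking $x=x_k$ in the same inequality shows that $\phi(x_k)$ is nonincreasing.

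\textbf{Telescoping.} Multiply the $i$-th inequality by $(1+\kappa)^{i}$; since $L(1+\kappa)^{i}\cdot(1-\kappa)\le L(1+\kappa)^{i-1}$ (because $(1-\kappa)(1+\kappa)\le1$), the divergence terms telescope. Summing for $i=0,\dots,k-1$ and dropping $-L(1+\kappa)^{k-1}D_d(x,x_k)\le0$ leaves only $(L-\mu)D_d(x,x_0)\le LD_d(x,x_0)$. Using monotonicity of $\phi(x_i)$ then gives
\[
\Big(\sum_{i=1}^{k}(1+\kappa)^{i-1}\Big)\big(\phi(x_k)-\phi(x)\big)\le L D_d(x,x_0).
\]
Since $\sum_{i=1}^{k}(1+\kappa)^{i-1}=((1+\kappa)^k-1)/\kappa$, this reads
\[
\phi(x_k)\le\phi(x)+L\frac{\kappa}{(1+\kappa)^k-1}D_d(x,x_0).
\]
With unit weights instead of $(1+\kappa)^i$, the same sum gives $\phi(x_k)\le\phi(x)+\frac{L}{k}D_d(x,x_0)$; this is the only bound when $\mu=0$. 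Both bounds hold for all $x$, so taking the smaller coefficient yields $\phi(x_k)\le\phi(x)+\eta_k^{-1}D_d(x,x_0)$. Minimizing over $x$ gives $\phi(x_k)\le\phi_\star^{\eta_k}$, so the iterates satisfy \cref{algo:implicit_descent}. Finally, $\eta_k$ is nondecreasing because both candidate coefficients decrease in $k$.

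\textbf{Main obstacle.} The delicate step is the weighted telescoping in the strongly convex case. One must check that the ratio $L/(L-\mu)\ge 1+\kappa$ lets consecutive $D_d(x,x_i)$ terms cancel or become nonpositive. One must also track the index shift so that the geometric sum produces exactly $((1+\kappa)^k-1)/\kappa$ rather than a version off by one factor of $1+\kappa$. If the off-by-one appears, I would absorb it by using $(L-\mu)\le L$ at the $x_0$ term or by reweighting with $(1+\kappa)^{i+1}$.
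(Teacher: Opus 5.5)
Your proposal is correct and follows the paper's proof: it uses the same one-step inequality $\phi(x_{k+1})\le\phi(x)+(L-\mu)D_d(x,x_k)-LD_d(x,x_{k+1})$, then geometric weighting, telescoping, and monotonicity of $\phi(x_k)$. The paper obtains that inequality through $f$ and a shifted anchor, whereas you derive it directly from the relative smoothness and strong convexity of $\phi$. The remaining difference is bookkeeping: the paper weights by $r^{i+1}$ with $r=L/(L-\mu)$, so the telescoping is exact and gives the sharper $L\kappa/(r^k-1)$ before relaxing with $r\ge1+\kappa$. Your weights $(1+\kappa)^i$ instead leave nonpositive middle terms and give the stated constant directly, so the off-by-one you worried about does not occur.
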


The two branches are the known rates of the Bregman gradient method: the \(O(1/k)\) rate under
relative smoothness \citep{bauschke2017descent,lu2018relatively}, and the linear rate under relative
strong convexity \citep{lu2018relatively}. What the proposition adds is the translation into the
envelope schedule, that is, the \(\eta_k\) for which the method satisfies the requirement of
\cref{algo:implicit_descent}.

\subsection{Saddle formulation of the envelope}
\label{app:saddle_formulation}

Define
\begin{equation}\label{eq:saddle_formulation}
S^\eta(x,\lambda)
:=
\langle x,\lambda\rangle-f^*(\lambda)
+\left(\mu+\frac1\eta\right)D_d(x,x_0).
\end{equation}
The Fenchel representation of \(f\) and the definition of
\(\mathcal D^\eta\) give
\[
\phi^\eta(x)=\sup_\lambda S^\eta(x,\lambda),
\qquad
\mathcal D^\eta(\lambda)=\min_x S^\eta(x,\lambda).
\]
Weak duality therefore yields
\[
\sup_\lambda\mathcal D^\eta(\lambda)
\le \min_x\sup_\lambda S^\eta(x,\lambda)
=\phi_\star^\eta.
\]
For \(\lambda=\nabla f(y)\), Fenchel equality gives
\[
S^\eta(x,\nabla f(y))
=
f(y)+\langle\nabla f(y),x-y\rangle
+\left(\mu+\frac1\eta\right)D_d(x,x_0),
\]
With \(\eta^{-1}=0\), this is the second term of the lower model \(\mathcal L_{k+1}\) of
\cref{eq:abra_lower_model}, at \(y=y_k\).

This formulation is helpful for proving the convergence rate of Dual Averaging, and therefore, AbrA-GD.

\subsection{Dual Averaging} \label{sec:dual_avg}

Even when \(S^\eta(x,\lambda)\) cannot be evaluated explicitly, we can still update the dual variable through the best-response (\textbf{BR}) maps
\begin{align}
    z^\eta(\lambda)
    &\in
    \arg\min_x S^\eta(x,\lambda)
    =
    \nabla d^*\!\left(\nabla d(x_0)-\frac{1}{\mu+\eta^{-1}}\lambda\right),
    \tag{Primal BR}
    \label{eq:primal_best_response}
    \\
    \lambda(x)
    &\in
    \arg\max_\lambda S^\eta(x,\lambda)
    =
    \nabla f(x).
    \tag{Dual BR}
    \label{eq:dual_best_response}
\end{align}
The dual-averaging update is the damped best-response-to-best-response iteration
\begin{align}
    \lambda_{k+1}
    &=
    (1-t_k)\lambda_k+t_k\lambda(z_k)
    =
    (1-t_k)\lambda_k+t_k\nabla f(z_k),
    \notag\\
    z_{k+1}
    &=z^{\eta_{k+1}}(\lambda_{k+1}).
    \tag{Dual Avg.}
    \label{eq:dual_avg}
\end{align}
This can be viewed as a damped alternating \(\min\)-\(\max\) scheme. Its main advantage is that it avoids computing \(\nabla f^*(\lambda)\), which is usually intractable.

\begin{theorem}
\label{thm:dual_averaging_implicit}
The update \cref{eq:dual_avg}, initialized with \(\eta_0=0\), \(\lambda_0=0\) and
\(z_0=x_0\), and using
\[
    t_k=\frac{\mu+\eta_k^{-1}}{L+\eta_k^{-1}},
    \qquad
    \eta_{k+1}^{-1}=(1-t_k)\eta_k^{-1},
\]
satisfies \cref{algo:implicit_descent} with
\[
    x_{k}\in\argmin_{1\le i\le k}\phi(z_i),\quad \eta_k^{-1}
    \le
    L\min\left\{
    \frac1k,
    \frac{\kappa}{(1+\kappa)^k-1}
    \right\},
    \qquad
    \kappa=\frac{\mu}{L},
\]
where the second term is omitted when \(\mu=0\).
\end{theorem}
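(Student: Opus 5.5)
The plan is to prove by induction the invariant
\[
\phi(x_k)\le\mathcal D^{\eta_k}(\lambda_k)\le\phi_\star^{\eta_k},
\]
which is exactly the requirement of \cref{algo:implicit_descent}. The rate then follows from a scalar recursion on \(\eta_k^{-1}\).

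\emph{Base case.} Since \(\eta_0=0\), we have \(t_0=1\) and \(\lambda_1=\nabla f(x_0)\). Interpreting \(\eta_1\) through \(\mu+\eta_1^{-1}=L\), i.e.\ \(\eta_1^{-1}=L-\mu\), we get
\[
\mathcal D^{\eta_1}(\lambda_1)=\min_x\{f(x_0)+\langle\nabla f(x_0),x-x_0\rangle+LD_d(x,x_0)\}.
\]
The minimizer is \(z_1\), and relative \((L-\mu)\)-smoothness of \(f\) together with \(\mu D_d(z_1,x_0)\) shows this value is at least \(\phi(z_1)\ge\phi(x_1)\).

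\emph{Inductive step.} Write \(a_k:=\mu+\eta_k^{-1}\) and \(t=t_k\), so that \(a_{k+1}=(1-t)a_k+t\mu\).

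\begin{enumerate}
\item \emph{Split the dual value.} For fixed \(x\), the map \(\lambda\mapsto\langle x,\lambda\rangle-f^*(\lambda)\) is concave. Using the saddle function \(S^\eta\) of \cref{eq:saddle_formulation}, this gives
\[
S^{\eta_{k+1}}(x,\lambda_{k+1})\ge(1-t)S^{\eta_k}(x,\lambda_k)+t\,S^{\infty}(x,\nabla f(z_k)),
\]
where \(S^\infty\) carries the coefficient \(\mu\) on \(D_d(x,x_0)\).

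\item \emph{Identify each piece.} Since \(S^{\eta_k}(\cdot,\lambda_k)\) is linear plus \(a_kD_d(\cdot,x_0)\) and is minimized at \(z_k\), the three-point identity gives
\[
S^{\eta_k}(x,\lambda_k)=\mathcal D^{\eta_k}(\lambda_k)+a_kD_d(x,z_k).
\]
By Fenchel equality and the anchored decomposition \cref{eq:anchor_divergence_split},
\[
S^\infty(x,\nabla f(z_k))=\phi(z_k)+\langle\nabla\phi(z_k),x-z_k\rangle+\mu D_d(x,z_k).
\]

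\item \emph{Combine.} Both Bregman terms are centered at \(z_k\), so minimizing over \(x\) gives
\[
\mathcal D^{\eta_{k+1}}(\lambda_{k+1})\ge(1-t)\mathcal D^{\eta_k}(\lambda_k)+t\phi(z_k)+\min_x\{t\langle\nabla\phi(z_k),x-z_k\rangle+a_{k+1}D_d(x,z_k)\}.
\]
\end{enumerate}

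\emph{Why this choice of \(t_k\).} The prescribed \(t_k=(\mu+\eta_k^{-1})/(L+\eta_k^{-1})\) is exactly the solution of \(tL=a_{k+1}\). Hence the last minimum equals
\[
t\min_x\{\langle\nabla\phi(z_k),x-z_k\rangle+LD_d(x,z_k)\}\ge t\bigl(\phi(\hat x)-\phi(z_k)\bigr),
\]
where \(\hat x\) is the Bregman gradient step from \(z_k\) with step \(1/L\), by relative \(L\)-smoothness of \(\phi\). I then check that \(\hat x=z_{k+1}\). Using \(\nabla d(z_k)=\nabla d(x_0)-\lambda_k/a_k\) and the dual update, a direct computation gives
\[
a_{k+1}\nabla d(z_{k+1})=a_{k+1}\nabla d(z_k)-t\nabla\phi(z_k),
\]
so \(\nabla d(z_{k+1})=\nabla d(z_k)-L^{-1}\nabla\phi(z_k)\).

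Applying the invariant at step \(k\) then yields
\[
\mathcal D^{\eta_{k+1}}(\lambda_{k+1})\ge(1-t)\phi(x_k)+t\phi(z_{k+1})\ge\min\{\phi(x_k),\phi(z_{k+1})\}=\phi(x_{k+1}).
\]
I expect this part to be the main obstacle: getting the concavity split with matched weights, and verifying that the dual-averaging point coincides with a gradient step.

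\emph{Rate.} Set \(u_k:=\eta_k\). The recursion \(\eta_{k+1}^{-1}=(1-t_k)\eta_k^{-1}=\eta_k^{-1}(L-\mu)/(L+\eta_k^{-1})\) gives
\[
u_{k+1}=\frac{L}{L-\mu}u_k+\frac{1}{L-\mu}\ge(1+\kappa)u_k+\frac1L,
\qquad u_1=\frac{1}{L-\mu}\ge\frac1L.
\]
Unrolling, \(u_k\ge k/L\) since \(1+\kappa\ge1\), and \(u_k\ge((1+\kappa)^k-1)/(\kappa L)\) when \(\mu>0\). Inverting these two bounds gives the stated \(\eta_k^{-1}\le L\min\{1/k,\kappa/((1+\kappa)^k-1)\}\).
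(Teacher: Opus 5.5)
Your proof is correct and follows essentially the paper's argument. The shared steps are: the concavity split of $S^\eta(x,\cdot)$, the optimality identity at $z_k$, Fenchel equality combined with relative smoothness, and the choice of $t_k$ that matches the Bregman coefficients (your identity $a_{k+1}=t_kL$ is the paper's cancellation of the $D_d(x,z_k)$ terms). The induction on the best iterate and the scalar recursion for $\eta_k$ are also the same.

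The one difference is how you finish the inductive step. You minimize over $x$ and then identify the minimizer as $z_{k+1}$, which nicely shows that this scheme is exactly Bregman gradient descent on $\phi$. The paper instead evaluates the pointwise bound at $x=z_{k+1}$ and applies smoothness there, so the identification you flagged as the main obstacle is not actually needed.
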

\section{Technical Lemmas}
\label{app:technical_lemmas}

\begin{lemma}[Three-point property of Tseng \citep{tseng2008accelerated}]
\label{lem:three_point}
Let $g:\mathcal X\to\mathbb R\cup\{+\infty\}$ be a convex function, and let $D_d(\cdot,\cdot)$ be the Bregman divergence induced by $d$. For a given $z\in\mathcal X$, define
\[
z_+ \;:=\;
\arg\min_{x\in\mathcal X}
\big\{
g(x) + D_d(x,z)
\big\}.
\]
Then, for all $x\in\mathcal X$,
\[
g(x) + D_d(x,z)
\;\ge\;
g(z_+) + D_d(z_+,z) + D_d(x,z_+).
\]
\end{lemma}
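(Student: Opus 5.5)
The plan is to use the first-order optimality condition of $z_+$ together with the three-point identity for Bregman divergences. I will assume, as the paper does throughout, that $z_+$ exists and lies in $\operatorname{int}(\dom d)$; this holds because $d$ is Legendre, so its essential smoothness keeps minimizers of $g+D_d(\cdot,z)$ away from the boundary. Under this assumption, $\nabla_x D_d(x,z)=\nabla d(x)-\nabla d(z)$ is defined at $z_+$, and optimality reads
\[
0\in\partial g(z_+)+\nabla d(z_+)-\nabla d(z),
\]
so that $p:=\nabla d(z)-\nabla d(z_+)\in\partial g(z_+)$.

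The first step is the subgradient inequality for the convex function $g$ at $z_+$:
\[
g(x)\ge g(z_+)+\langle \nabla d(z)-\nabla d(z_+),\,x-z_+\rangle \quad\text{for all }x\in\mathcal X.
\]
The second step is the Bregman three-point identity, which follows by expanding the definitions and cancelling the $d$ terms:
\[
D_d(x,z)-D_d(z_+,z)-D_d(x,z_+)=\langle \nabla d(z_+)-\nabla d(z),\,x-z_+\rangle .
\]
Adding $D_d(x,z)$ to both sides of the subgradient inequality and substituting the identity, the inner-product terms cancel exactly. What remains is
\[
g(x)+D_d(x,z)\ge g(z_+)+D_d(z_+,z)+D_d(x,z_+),
\]
which is the claim.

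The main obstacle is the optimality step. The sum rule $\partial(g+D_d(\cdot,z))=\partial g+\nabla d-\nabla d(z)$ requires a qualification condition, for instance $\operatorname{ri}\dom g\cap\operatorname{int}\dom d\neq\emptyset$. It also requires $z_+\in\operatorname{int}\dom d$, so that $\nabla d(z_+)$ exists.

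To handle this, I would first invoke Rockafellar's Theorem 23.8 under the standing assumptions. In the paper's uses, $g$ is linear or affine plus a multiple of $D_d(\cdot,x_0)$, so its domain contains $\operatorname{int}\dom d$ and the qualification condition is immediate. Next, essential smoothness gives $\partial d(x)=\emptyset$ for $x$ on the boundary of $\dom d$, which forces $z_+$ into the interior. Finally, for $x$ on the boundary of $\mathcal X$, I would conclude by lower semicontinuity, taking a limit along the segment from an interior point toward $x$.
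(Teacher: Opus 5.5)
The paper gives no proof of this lemma (it only cites Tseng), and your argument---optimality of $z_+$ yielding $\nabla d(z)-\nabla d(z_+)\in\partial g(z_+)$, followed by the Bregman three-point identity to cancel the inner product---is the standard proof from that source and is correct, including your use of the sum rule together with $\partial d=\emptyset$ off $\operatorname{int}(\dom d)$ to place $z_+$ in the interior. Your final boundary step is unnecessary: the subgradient inequality at $z_+$ holds for every $x$ and the identity needs $\nabla d$ only at $z$ and $z_+$, so the chain already covers every $x\in\dom d$ (both sides are $+\infty$ otherwise); if a segment limit were needed, the bound $g(x)\ge\limsup_t g(x_t)$ would come from convexity of $g$, not from lower semicontinuity, which points the wrong way and is not assumed.
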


Note that when the minimization is unconstrained and \(g\) is affine, the inequality holds with equality,
\begin{equation}
g(x) + D_d(x,z)
\;=\;
g(z_+) + D_d(z_+,z) + D_d(x,z_+). \label{eq:three_point_exact}
\end{equation}

\begin{lemma}[Growth around the minimizer]
\label{lem:rel_strong_min}
Let $g$ be differentiable and $\alpha$-strongly convex relative to $d$, and let
$z:=\arg\min_x g(x)$ lie in the interior of the domain of $d$. Then, for all $x$,
\[
g(x)\;\ge\;g(z)+\alpha D_d(x,z).
\]
If $g-\alpha d$ is affine, the inequality holds with equality.
\end{lemma}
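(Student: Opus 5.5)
The statement is \cref{lem:rel_strong_min}: if $g$ is $\alpha$-strongly convex relative to $d$ and minimized at an interior point $z$, then $g(x)\ge g(z)+\alpha D_d(x,z)$, with equality when $g-\alpha d$ is affine. The plan is to apply the definition of relative strong convexity at the base point $y=z$ and then remove the linear term using first-order optimality.

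\textbf{Step 1 (optimality).} Since $z$ is an interior minimizer and $g$ is differentiable there, $\nabla g(z)=0$.

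\textbf{Step 2 (strong convexity at $z$).} By \cref{def:rel_smooth_str_cvx} with $y=z$,
\[
g(x)-g(z)-\langle\nabla g(z),x-z\rangle\ \ge\ \alpha D_d(x,z)\qquad\text{for all } x\in\operatorname{int}(\mathcal X).
\]
Substituting $\nabla g(z)=0$ gives $g(x)\ge g(z)+\alpha D_d(x,z)$ on $\operatorname{int}(\mathcal X)$.

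\textbf{Step 3 (extension to the boundary).} The definition only covers interior points, so points of $\operatorname{cl}(\dom d)\setminus\operatorname{int}(\dom d)$ need a separate argument. The cleanest route uses the characterization from \cref{app:anchored_decomposition}: $D_g\ge\alpha D_d$ means $h:=g-\alpha d$ is convex, and this holds on the whole domain by closedness. Then
\[
g(x)-g(z)-\alpha D_d(x,z)=h(x)-h(z)-\langle\nabla h(z),x-z\rangle,
\]
where $\nabla h(z)=\nabla g(z)-\alpha\nabla d(z)=-\alpha\nabla d(z)$. The right-hand side is nonnegative by convexity of $h$, for every $x\in\dom d$ at which $g$ is finite. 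If one prefers to avoid this characterization, approximate $x$ along the segment $z+s(x-z)$ with $s\uparrow1$. Both sides are convex in $s$, so they are lower semicontinuous along the segment, and the inequality passes to the limit. This boundary step is the only potentially delicate point.

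\textbf{Step 4 (equality case).} If $h=g-\alpha d$ is affine, its Bregman remainder vanishes identically. The identity in Step 3 then holds with both sides equal to zero, so $g(x)=g(z)+\alpha D_d(x,z)$ exactly.
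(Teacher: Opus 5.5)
Your proof is correct and is essentially the paper's: the paper argues exactly as in your Steps~3--4, using convexity of $h=g-\alpha d$, the gradient inequality at $z$ with $\nabla h(z)=-\alpha\nabla d(z)$ (from $\nabla g(z)=0$), and exactness of that inequality when $h$ is affine, while your Step~2 reads the same inequality directly off the definition of relative strong convexity at $y=z$. One small correction to your optional segment argument: with $x_s:=z+s(x-z)$, passing $g(x_s)\ge g(z)+\alpha D_d(x_s,z)$ to the limit $s\uparrow1$ needs \emph{upper} semicontinuity of the left side, which convexity supplies through the chord bound $g(x_s)\le(1-s)g(z)+s\,g(x)$, and lower semicontinuity of the right side, which comes from $d$ being closed rather than from convexity.
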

\begin{proof}
The function $h:=g-\alpha d$ is convex, so $h(x)\ge h(z)+\langle\nabla h(z),x-z\rangle$, with equality
when $h$ is affine. Since $z$ is an interior minimizer of $g$, $\nabla g(z)=0$ and
$\nabla h(z)=-\alpha\nabla d(z)$. Adding $\alpha d(x)=\alpha d(z)+\alpha\langle\nabla d(z),x-z\rangle+\alpha D_d(x,z)$
gives the result.
\end{proof}

The following generic lemma is important in the proof of Dual Averaging and Accelerated Gradient Descent.

\begin{restatable}[Dual lower-bound recursion]{lemma}{PrimalDualLowerStep}
\label{lem:pd_lower_step}
Under \cref{ass:setting}, so that $\phi$ is $L$-smooth and $\mu$-strongly convex relative to $d$, assume that, for some $b_k$,
\[
b_k\le \mathcal D^{\eta_k}(\lambda_k),
\qquad
\lambda_{k+1}=(1-t_k)\lambda_k+t_k\nabla f(y_k),
\qquad
\eta_{k+1}^{-1}=(1-t_k)\eta_k^{-1},
\]
and let
\[
z_k\in\arg\min_x S^{\eta_k}(x,\lambda_k),
\qquad
z_{k+1}\in\arg\min_x S^{\eta_{k+1}}(x,\lambda_{k+1}),
\]
where $S^\eta$ is the saddle point problem defined in \cref{eq:saddle_formulation}. Define
\[
\phi_k^{\mathrm{low}}
:=
\phi(y_k)
+
\langle\nabla\phi(y_k),z_k-y_k\rangle
+
\mu D_d(z_k,y_k).
\]
Then
\[
\mathcal D^{\eta_{k+1}}(\lambda_{k+1})
\ge
(1-t_k)b_k
+
t_k\phi_k^{\mathrm{low}}
-
\left(\mu+\eta_{k+1}^{-1}\right)D_d(z_k,z_{k+1}).
\]
If \(t_k=1\), the same conclusion holds with the term \((1-t_k)b_k\) omitted, for any
\(\eta_{k+1}>0\), and without the hypothesis \(b_k\le\mathcal D^{\eta_k}(\lambda_k)\) or the
recursion \(\eta_{k+1}^{-1}=(1-t_k)\eta_k^{-1}\):
\[
\mathcal D^{\eta_{k+1}}(\lambda_{k+1})
\ge
\phi_k^{\mathrm{low}}
-
\left(\mu+\eta_{k+1}^{-1}\right)D_d(z_k,z_{k+1}).
\]
\end{restatable}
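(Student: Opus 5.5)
Write $c_k:=\mu+\eta_k^{-1}$ and $c_{k+1}:=\mu+\eta_{k+1}^{-1}$. The plan is to lower bound the saddle function $S^{\eta_{k+1}}(x,\lambda_{k+1})$ by an explicit convex combination of two models in $x$, and then minimize that combination exactly. The proof has four steps.

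\textbf{Step 1 (split).} Since $f^*$ is convex, $-f^*(\lambda_{k+1})\ge -(1-t_k)f^*(\lambda_k)-t_kf^*(\nabla f(y_k))$. The pairing $\langle x,\lambda\rangle$ is linear in $\lambda$. The recursion $\eta_{k+1}^{-1}=(1-t_k)\eta_k^{-1}$ gives $c_{k+1}=(1-t_k)c_k+t_k\mu$. Hence, for every $x$,
\[
S^{\eta_{k+1}}(x,\lambda_{k+1})\ge(1-t_k)S^{\eta_k}(x,\lambda_k)+t_k\big[f(y_k)+\langle\nabla f(y_k),x-y_k\rangle+\mu D_d(x,x_0)\big],
\]
where the bracket comes from Fenchel equality at $\lambda=\nabla f(y_k)$ (\cref{app:saddle_formulation}).

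\textbf{Step 2 (first term).} The map $x\mapsto S^{\eta_k}(x,\lambda_k)$ equals $c_k d$ plus an affine function, and $z_k$ is its minimizer. \Cref{lem:rel_strong_min} then holds with equality, giving $S^{\eta_k}(x,\lambda_k)=\mathcal D^{\eta_k}(\lambda_k)+c_kD_d(x,z_k)\ge b_k+c_kD_d(x,z_k)$.

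\textbf{Step 3 (second term).} By \cref{eq:anchor_divergence_split}, $f(y_k)+\langle\nabla f(y_k),x-y_k\rangle+\mu D_d(x,x_0)$ equals $\phi(y_k)+\langle\nabla\phi(y_k),x-y_k\rangle+\mu D_d(x,y_k)$. This identity holds because $D_d(x,x_0)-D_d(x,y_k)-D_d(y_k,x_0)$ is affine in $x$ with the correct gradient. Combining Steps 1–3 gives $S^{\eta_{k+1}}(x,\lambda_{k+1})\ge \mathcal L(x)$, where $\mathcal L$ is the model \cref{eq:abra_lower_model} with $b_k$ in place of $\phi_k^{\mathrm{best}}$. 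Therefore $\mathcal D^{\eta_{k+1}}(\lambda_{k+1})=\min_x S^{\eta_{k+1}}(x,\lambda_{k+1})\ge\min_x\mathcal L(x)$.

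\textbf{Step 4 (minimize $\mathcal L$, the main step).} The model $\mathcal L$ equals $c_{k+1}d$ plus an affine function, since the coefficient of $d$ is $(1-t_k)c_k+t_k\mu=c_{k+1}$. Its gradient differs from that of $S^{\eta_{k+1}}(\cdot,\lambda_{k+1})$ by a constant that I must show vanishes. That gradient is $(1-t_k)c_k(\nabla d(x)-\nabla d(z_k))+t_k(\nabla\phi(y_k)+\mu(\nabla d(x)-\nabla d(y_k)))$. Substituting $\nabla\phi(y_k)=\nabla f(y_k)+\mu(\nabla d(y_k)-\nabla d(x_0))$ and $c_k(\nabla d(z_k)-\nabla d(x_0))=-\lambda_k$ turns it into $\lambda_{k+1}+c_{k+1}(\nabla d(x)-\nabla d(x_0))$. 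This vanishes exactly at $z_{k+1}$, so $z_{k+1}$ minimizes $\mathcal L$.

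Applying \cref{lem:rel_strong_min} with equality at the minimizer then gives $\mathcal L(z_k)=\mathcal L(z_{k+1})+c_{k+1}D_d(z_k,z_{k+1})$. Evaluating $\mathcal L(z_k)$ directly, the first term loses its divergence since $D_d(z_k,z_k)=0$, leaving $\mathcal L(z_k)=(1-t_k)b_k+t_k\phi_k^{\mathrm{low}}$. Solving for $\mathcal L(z_{k+1})$ yields the claimed bound.

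\textbf{Case $t_k=1$.} The first term is absent entirely, so Step 2 is not needed and no hypothesis on $b_k$ or on the $\eta$ recursion is used. The coefficient of $d$ in $\mathcal L$ is then only $\mu$, not $c_{k+1}$. I therefore keep the $\eta_{k+1}^{-1}D_d(x,x_0)$ term of $S^{\eta_{k+1}}$ and use $S^{\eta_{k+1}}(x,\lambda_{k+1})=\mathcal D^{\eta_{k+1}}(\lambda_{k+1})+c_{k+1}D_d(x,z_{k+1})$. Evaluating at $x=z_k$ gives $\mathcal D^{\eta_{k+1}}(\lambda_{k+1})=S^{\eta_{k+1}}(z_k,\lambda_{k+1})-c_{k+1}D_d(z_k,z_{k+1})$. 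Lower bounding $S^{\eta_{k+1}}(z_k,\lambda_{k+1})$ by Fenchel equality and the Step 3 identity gives $\phi_k^{\mathrm{low}}+\eta_{k+1}^{-1}D_d(z_k,x_0)\ge\phi_k^{\mathrm{low}}$.

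The same evaluation at $x=z_k$ also works in the general case, which avoids the gradient bookkeeping of Step 4.
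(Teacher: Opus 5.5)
Your proposal is correct and follows essentially the paper's proof: the concavity split in $\lambda$, the exact growth of $S^{\eta_k}(\cdot,\lambda_k)$ around $z_k$, the three-point conversion to the $\phi$-model, the check via optimality conditions that $z_{k+1}$ minimizes the combined model of curvature $\mu+\eta_{k+1}^{-1}$, the exact three-point identity at $z_k$, and the same direct computation when $t_k=1$. Your closing shortcut is also valid: $S^{\eta_{k+1}}(\cdot,\lambda_{k+1})-\mathcal L$ is constant in $x$, so evaluating the exact growth identity of $S^{\eta_{k+1}}(\cdot,\lambda_{k+1})$ around $z_{k+1}$ at $x=z_k$ (which the paper does only for $t_k=1$) handles both cases at once.
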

\begin{proof}
By concavity of \(S^\eta(x,\cdot)\), evaluated at \(x=z_{k+1}\),
\[
\mathcal D^{\eta_{k+1}}(\lambda_{k+1})
=
S^{\eta_{k+1}}(z_{k+1},\lambda_{k+1})
\ge
(1-t_k)S^{\eta_{k+1}}(z_{k+1},\lambda_k)
+
t_kS^{\eta_{k+1}}(z_{k+1},\nabla f(y_k)).
\]
Using
\[
S^{\eta_{k+1}}(x,\lambda_k)
=
S^{\eta_k}(x,\lambda_k)
+
(\eta_{k+1}^{-1}-\eta_k^{-1})D_d(x,x_0),
\]
\[
S^{\eta_k}(x,\lambda_k)
\ge
b_k+(\mu+\eta_k^{-1})D_d(x,z_k),
\]
where the last inequality follows directly from the optimality of
\(z_k\). Indeed, with \(\alpha_k:=\mu+\eta_k^{-1}\),
\[
\begin{aligned}
S^{\eta_k}(x,\lambda_k)-S^{\eta_k}(z_k,\lambda_k)
&=
\left\langle
\lambda_k+\alpha_k\bigl(\nabla d(z_k)-\nabla d(x_0)\bigr),
 x-z_k
\right\rangle
+
\alpha_kD_d(x,z_k)
\\
&\ge \alpha_kD_d(x,z_k),
\end{aligned}
\]
and \(S^{\eta_k}(z_k,\lambda_k)=\mathcal D^{\eta_k}(\lambda_k)\ge b_k\). This is the only use of the
hypothesis on \(b_k\), and it enters the conclusion through the factor \((1-t_k)\).
Finally, since \(\eta_{k+1}^{-1}=(1-t_k)\eta_k^{-1}\), the coefficients of \(D_d(x,x_0)\) sum to
\[
(1-t_k)(\eta_{k+1}^{-1}-\eta_k^{-1})+t_k(\mu+\eta_{k+1}^{-1})=t_k\mu .
\]
The identity \(D_d(x,x_0)=D_d(y_k,x_0)+\langle\nabla d(y_k)-\nabla d(x_0),x-y_k\rangle+D_d(x,y_k)\)
turns \(t_k\big(f(y_k)+\langle\nabla f(y_k),x-y_k\rangle+\mu D_d(x,x_0)\big)\) into
\(t_k\big(\phi(y_k)+\langle\nabla\phi(y_k),x-y_k\rangle+\mu D_d(x,y_k)\big)\). Hence
\[
\mathcal D^{\eta_{k+1}}(\lambda_{k+1})
\ge
(1-t_k)b_k
+
t_k\phi(y_k)
+
H_k(z_{k+1}),
\]
where
\[
H_k(x)
:=
(1-t_k)(\mu+\eta_k^{-1})D_d(x,z_k)
+
t_k\mu D_d(x,y_k)
+
t_k\langle\nabla\phi(y_k),x-y_k\rangle .
\]

We claim that \(z_{k+1}\in\arg\min_x H_k(x)\). Indeed, using
\[
\lambda_k+(\mu+\eta_k^{-1})(\nabla d(z_k)-\nabla d(x_0))=0,
\]
\[
\lambda_{k+1}=(1-t_k)\lambda_k+t_k\nabla f(y_k),
\qquad
\nabla f(y_k)=\nabla\phi(y_k)-\mu(\nabla d(y_k)-\nabla d(x_0)),
\]
the optimality condition of
\[
z_{k+1}\in\arg\min_x S^{\eta_{k+1}}(x,\lambda_{k+1})
\]
is equivalent to
\[
\begin{aligned}
0
=(1-t_k)(\mu+\eta_k^{-1})
\bigl(\nabla d(z_{k+1})-\nabla d(z_k)\bigr)
+
t_k\mu
\bigl(\nabla d(z_{k+1})-\nabla d(y_k)\bigr)
+
t_k\nabla\phi(y_k),
\end{aligned}
\]
which is exactly the optimality condition for \(z_{k+1}\in\arg\min_x H_k(x)\).

Moreover, \(H_k\) has Bregman curvature
\[
(1-t_k)(\mu+\eta_k^{-1})+t_k\mu
=
\mu+\eta_{k+1}^{-1}.
\]
Therefore \(H_k\) is an affine function plus a weighted sum of Bregman divergences in its first
argument, with total weight \(\mu+\eta_{k+1}^{-1}\), and \(z_{k+1}\) is its unconstrained minimizer.
\Cref{eq:three_point_exact} applies and gives, at \(x=z_k\),
\[
H_k(z_{k+1})
=
H_k(z_k)
-
(\mu+\eta_{k+1}^{-1})D_d(z_k,z_{k+1}).
\]
The three-point property of \cref{lem:three_point} would give this inequality in the opposite
direction; here it holds as an equality because the minimization is unconstrained.
Since
\[
H_k(z_k)
=
t_k\mu D_d(z_k,y_k)
+
t_k\langle\nabla\phi(y_k),z_k-y_k\rangle,
\]
we obtain
\[
\begin{aligned}
\mathcal D^{\eta_{k+1}}(\lambda_{k+1})
\ge\;&
(1-t_k)b_k
+
t_k\phi(y_k)
+
t_k\langle\nabla\phi(y_k),z_k-y_k\rangle
\\
&+
t_k\mu D_d(z_k,y_k)
-
(\mu+\eta_{k+1}^{-1})D_d(z_k,z_{k+1}).
\end{aligned}
\]
By definition,
\[
\phi_k^{\mathrm{low}}
=
\phi(y_k)
+
\langle\nabla\phi(y_k),z_k-y_k\rangle
+
\mu D_d(z_k,y_k),
\]
and therefore
\[
\mathcal D^{\eta_{k+1}}(\lambda_{k+1})
\ge
(1-t_k)b_k
+
t_k\phi_k^{\mathrm{low}}
-
(\mu+\eta_{k+1}^{-1})D_d(z_k,z_{k+1}).
\]

For the case \(t_k=1\), argue directly. Then \(\lambda_{k+1}=\nabla f(y_k)\), so the concavity step
above is unnecessary and neither \(\lambda_k\) nor \(\eta_k\) appears. Writing
\(\alpha_{k+1}=\mu+\eta_{k+1}^{-1}\) and using that the conjugate is attained at \(y_k\),
\[
\mathcal D^{\eta_{k+1}}(\lambda_{k+1})
=
S^{\eta_{k+1}}(z_{k+1},\nabla f(y_k))
=
f(y_k)+\langle\nabla f(y_k),z_{k+1}-y_k\rangle+\alpha_{k+1}D_d(z_{k+1},x_0).
\]
Substituting \(\nabla f(y_k)=\nabla\phi(y_k)-\mu(\nabla d(y_k)-\nabla d(x_0))\) and
\(f(y_k)=\phi(y_k)-\mu D_d(y_k,x_0)\), and expanding \(\mu D_d(z_{k+1},x_0)\) by the three-point
identity around \(y_k\), the terms anchored at \(x_0\) combine into \(\mu D_d(z_{k+1},y_k)\):
\[
\mathcal D^{\eta_{k+1}}(\lambda_{k+1})
=
\phi(y_k)+\widetilde H_k(z_{k+1}),
\qquad
\widetilde H_k(x)
:=
\eta_{k+1}^{-1}D_d(x,x_0)
+
\mu D_d(x,y_k)
+
\langle\nabla\phi(y_k),x-y_k\rangle .
\]
The optimality condition for \(z_{k+1}\in\arg\min_xS^{\eta_{k+1}}(x,\lambda_{k+1})\) is exactly the
one for \(z_{k+1}\in\arg\min_x\widetilde H_k(x)\), and \(\widetilde H_k\) is affine plus Bregman
divergences of total weight \(\eta_{k+1}^{-1}+\mu=\alpha_{k+1}\), so \cref{eq:three_point_exact} at
\(x=z_k\) gives \(\widetilde H_k(z_{k+1})=\widetilde H_k(z_k)-\alpha_{k+1}D_d(z_k,z_{k+1})\). Since
\(\phi(y_k)+\widetilde H_k(z_k)=\phi_k^{\mathrm{low}}+\eta_{k+1}^{-1}D_d(z_k,x_0)\) and
\(D_d\ge0\),
\[
\mathcal D^{\eta_{k+1}}(\lambda_{k+1})
=
\phi_k^{\mathrm{low}}+\eta_{k+1}^{-1}D_d(z_k,x_0)-\alpha_{k+1}D_d(z_k,z_{k+1})
\ge
\phi_k^{\mathrm{low}}-\alpha_{k+1}D_d(z_k,z_{k+1}).
\]
At \(k=0\) the initialization gives \(z_0=x_0\), so the middle term is zero and the bound holds with
equality.
\end{proof}

\clearpage
\subsection{Convergence rate of gradient descent and dual averaging}
\label{app:rates}

\begin{proof}[Proof of \cref{thm:implicit_descent_rate}]
By construction, \cref{algo:implicit_descent} produces $x_k$ satisfying $\phi(x_k)\le\phi_\star^{\eta_k}$. Bounding the minimum in \cref{eq:enveloppe} by its value at $x^\star$,
\[
\phi_\star^{\eta_k}
=
\min_x\left\{\phi(x)+\frac{1}{\eta_k}D_d(x,x_0)\right\}
\;\leq\;
\phi(x^\star)+\frac{1}{\eta_k}D_d(x^\star,x_0)
=
\phi_\star+\frac{D_d(x_\star,x_0)}{\eta_k}.
\]
Chaining the two inequalities gives \cref{eq:rate_implicit}.
\end{proof}

\begin{proof}[Proof of \cref{prop:rate_grad}]
The optimality condition of the Bregman gradient step is
\[
\nabla \phi(x_k)+L\big(\nabla d(x_{k+1})-\nabla d(x_k)\big)=0.
\]
The objective of this step is \(\langle\nabla f(x_k),x\rangle+LD_d(x,w_k)\) up to a constant, with
\(\nabla d(w_k)=(1-\kappa)\nabla d(x_k)+\kappa\nabla d(x_0)\) and \(\kappa=\mu/L\).
By relative \((L-\mu)\)-smoothness of \(f\), convexity of \(f\), and \cref{eq:three_point_exact}, we obtain
\begin{equation}\label{eq:gd_one_step_implicit}
\phi(x_{k+1})
\le
f(x)+\mu D_d(x,x_0)
+(L-\mu)D_d(x,x_k)
-
L D_d(x,x_{k+1}).
\end{equation}

Let
\[
r:=\frac{L}{L-\mu}=\frac{1}{1-\kappa}.
\]
Multiplying \cref{eq:gd_one_step_implicit} by \(r^{k+1}\) and using
\[
(L-\mu)r^{k+1}=Lr^k,
\]
gives
\[
r^{k+1}\phi(x_{k+1})
\le
r^{k+1}\bigl(f(x)+\mu D_d(x,x_0)\bigr)
+
L\bigl(r^kD_d(x,x_k)-r^{k+1}D_d(x,x_{k+1})\bigr).
\]
Summing from \(i=0\) to \(k\) yields
\[
\sum_{i=0}^k r^{i+1}\phi(x_{i+1})
\le
\left(\sum_{i=0}^k r^{i+1}\right)
\bigl(f(x)+\mu D_d(x,x_0)\bigr)
+
L\bigl(D_d(x,x_0)-r^{k+1}D_d(x,x_{k+1})\bigr).
\]
Dropping the last negative term and using monotonicity of \(\phi(x_i)\), we get
\[
\phi(x_{k+1})
\le
f(x)
+
\left(
\mu+
\frac{L}{\sum_{i=0}^k r^{i+1}}
\right)D_d(x,x_0).
\]
Therefore \(x_{k+1}\) satisfies \cref{algo:implicit_descent} with
\[
\eta_{k+1}^{-1}
=
\frac{L}{\sum_{i=0}^k r^{i+1}}.
\]
If \(\mu=0\), then \(r=1\), so
\[
\eta_{k+1}^{-1}
=
\frac{L}{k+1}.
\]
If \(0<\mu<L\), then
\[
\eta_{k+1}^{-1}
=
L\frac{\kappa}{r^{k+1}-1}
=
L\frac{\kappa(1-\kappa)^{k+1}}{1-(1-\kappa)^{k+1}}.
\]
Since \(r=(1-\kappa)^{-1}\ge 1+\kappa\), we have
\[
\eta_{k+1}^{-1}
\le
L\frac{\kappa}{(1+\kappa)^{k+1}-1}.
\]
Thus the simpler valid rate is
\[
\eta_{k+1}^{-1}
\le
L\min\left\{
\frac{1}{k+1},
\frac{\kappa}{(1+\kappa)^{k+1}-1}
\right\}.
\]
\end{proof}

\begin{restatable}[Dual-averaging one-step recursion]{lemma}{DualAveragingOneStep}
\label{lem:dual_averaging_one_step}
Let
\[
\mathcal D_k:=\mathcal D^{\eta_k}(\lambda_k),
\qquad
z_k=z^{\eta_k}(\lambda_k),
\]
and assume
\[
\lambda_{k+1}
=
(1-t_k)\lambda_k+t_k\nabla f(z_k),
\qquad
z_{k+1}=z^{\eta_{k+1}}(\lambda_{k+1}).
\]
Choose
\[
t_k=\frac{\mu+\eta_k^{-1}}{L+\eta_k^{-1}},
\qquad
\eta_{k+1}^{-1}=(1-t_k)\eta_k^{-1}.
\]
Then
\[
\mathcal D_{k+1}
\ge
(1-t_k)\mathcal D_k+t_k\phi(z_{k+1}).
\]
Moreover, if \(t_k\in[0,1]\), \(x_{k+1}\in\argmin\{\phi(x_k),\phi(z_{k+1})\}\), and
\[
\phi(x_k)\le \mathcal D_k,
\]
then
\[
\phi(x_{k+1})\le \mathcal D_{k+1}.
\]
\end{restatable}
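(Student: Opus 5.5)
The plan is to reuse the intermediate inequality from the proof of \cref{lem:pd_lower_step}, taken \emph{before} the three-point identity is applied, with the gradient point chosen as $y_k=z_k$ and with $b_k=\mathcal D_k$. First observe that $\nabla f(z_k)$ is exactly the update of \cref{lem:pd_lower_step} for this choice of $y_k$. Second, the hypothesis $b_k\le\mathcal D^{\eta_k}(\lambda_k)$ then holds with equality.

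That proof establishes
\[
\mathcal D_{k+1}\ge(1-t_k)\mathcal D_k+t_k\phi(z_k)+H_k(z_{k+1}),
\]
where, after setting $y_k=z_k$, the function $H_k$ becomes
\[
H_k(x)=\bigl[(1-t_k)(\mu+\eta_k^{-1})+t_k\mu\bigr]D_d(x,z_k)+t_k\langle\nabla\phi(z_k),x-z_k\rangle
=(\mu+\eta_{k+1}^{-1})D_d(x,z_k)+t_k\langle\nabla\phi(z_k),x-z_k\rangle .
\]
Instead of comparing $H_k(z_{k+1})$ with $H_k(z_k)$, I will bound it below by a smoothness model. The key computation uses the step size $t_k=(\mu+\eta_k^{-1})/(L+\eta_k^{-1})$:
\[
\mu+\eta_{k+1}^{-1}=\mu+\eta_k^{-1}-t_k\eta_k^{-1}=t_k(L+\eta_k^{-1})-t_k\eta_k^{-1}=t_kL .
\]
Hence $H_k(z_{k+1})=t_k\bigl(\langle\nabla\phi(z_k),z_{k+1}-z_k\rangle+L D_d(z_{k+1},z_k)\bigr)$. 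Relative $L$-smoothness of $\phi$ (\cref{app:anchored_decomposition}) then gives $H_k(z_{k+1})\ge t_k\bigl(\phi(z_{k+1})-\phi(z_k)\bigr)$. Substituting this yields the first claim $\mathcal D_{k+1}\ge(1-t_k)\mathcal D_k+t_k\phi(z_{k+1})$.

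The step I expect to need the most care is the initial iteration, where $\eta_0=0$, so that $\eta_0^{-1}=\infty$ and $t_0=1$. There the recursion $\eta_1^{-1}=(1-t_0)\eta_0^{-1}$ is read as the limit, which gives $\mu+\eta_1^{-1}=L$. I will handle this case with the $t_k=1$ branch of \cref{lem:pd_lower_step}, which gives the exact expression $\mathcal D_1=\phi(z_0)+\widetilde H_0(z_1)$. Since $z_0=x_0$, the function $\widetilde H_0$ collapses to $L D_d(x,z_0)+\langle\nabla\phi(z_0),x-z_0\rangle$. The same smoothness bound then gives $\mathcal D_1\ge\phi(z_1)$, which is the claim with $t_0=1$.

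For the second claim, $t_k\in[0,1]$ and the choice of $x_{k+1}$ give the chain
\[
\phi(x_{k+1})\le(1-t_k)\phi(x_k)+t_k\phi(z_{k+1})\le(1-t_k)\mathcal D_k+t_k\phi(z_{k+1})\le\mathcal D_{k+1}.
\]
The middle inequality uses the hypothesis $\phi(x_k)\le\mathcal D_k$, and the last one is the first claim.
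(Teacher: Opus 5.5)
Your proof is correct and is essentially the paper's argument. The paper repeats the concavity and anchoring steps directly and uses relative $(L-\mu)$-smoothness of $f$ to get $S^{\eta_{k+1}}(x,\lambda_{k+1})\ge(1-t_k)\mathcal D_k+t_k\phi(x)$ for every $x$; there the choice of $t_k$ makes the $D_d(x,z_k)$ coefficient vanish, which is the same cancellation as your identity $\mu+\eta_{k+1}^{-1}=t_kL$ combined with relative $L$-smoothness of $\phi$. Your second claim uses the same convex-combination chain as the paper, and your separate $k=0$ treatment via the $t_k=1$ branch of \cref{lem:pd_lower_step} is a harmless addition that the paper handles through the limiting reading in the proof of \cref{thm:dual_averaging_implicit}.
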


\begin{proof}
By concavity of \(S^\eta(x,\cdot)\), for every \(x\),
\[
S^{\eta_{k+1}}(x,\lambda_{k+1})
\ge
(1-t_k)S^{\eta_{k+1}}(x,\lambda_k)
+
t_kS^{\eta_{k+1}}(x,\nabla f(z_k)).
\]
Using
\[
S^{\eta_{k+1}}(x,\lambda_k)
=
S^{\eta_k}(x,\lambda_k)
+
(\eta_{k+1}^{-1}-\eta_k^{-1})D_d(x,x_0),
\]
and \(z_k=z^{\eta_k}(\lambda_k)\), we have
\[
S^{\eta_k}(x,\lambda_k)
\ge
\mathcal D_k
+
(\mu+\eta_k^{-1})D_d(x,z_k).
\]
By relative \((L-\mu)\)-smoothness of \(f\),
\[
f(z_k)+\langle\nabla f(z_k),x-z_k\rangle
\ge
f(x)-(L-\mu)D_d(x,z_k),
\]
hence
\[
S^{\eta_{k+1}}(x,\nabla f(z_k))
\ge
\phi(x)
+
\eta_{k+1}^{-1}D_d(x,x_0)
-
(L-\mu)D_d(x,z_k).
\]
Combining gives
\[
\begin{aligned}
S^{\eta_{k+1}}(x,\lambda_{k+1})
\ge\;&
(1-t_k)\mathcal D_k
+
t_k\phi(x)
\\
&+
\Big[(1-t_k)(\mu+\eta_k^{-1})-t_k(L-\mu)\Big]D_d(x,z_k)
\\
&+
\Big[\eta_{k+1}^{-1}-(1-t_k)\eta_k^{-1}\Big]D_d(x,x_0).
\end{aligned}
\]
With the chosen \(t_k\) and \(\eta_{k+1}\), both Bregman coefficients vanish, so
\[
S^{\eta_{k+1}}(x,\lambda_{k+1})
\ge
(1-t_k)\mathcal D_k+t_k\phi(x).
\]
Taking \(x=z_{k+1}=z^{\eta_{k+1}}(\lambda_{k+1})\) gives
\[
\mathcal D_{k+1}
\ge
(1-t_k)\mathcal D_k+t_k\phi(z_{k+1}).
\]

Now assume \(\phi(x_k)\le\mathcal D_k\). Since
\[
x_{k+1}\in\argmin\{\phi(x_k),\phi(z_{k+1})\},
\]
and \(t_k\in[0,1]\),
\[
\phi(x_{k+1})
\le
(1-t_k)\phi(x_k)+t_k\phi(z_{k+1})
\le
(1-t_k)\mathcal D_k+t_k\phi(z_{k+1})
\le
\mathcal D_{k+1}.
\]
\end{proof}

\begin{proof}[Proof of \cref{thm:dual_averaging_implicit}]
Let
\[
\mathcal D_k:=\mathcal D^{\eta_k}(\lambda_k).
\]
The updates in the statement are the hypotheses of \cref{lem:dual_averaging_one_step}, whose dual step is taken at \(z_k\). Its remaining hypothesis \(z_k=z^{\eta_k}(\lambda_k)\) holds for \(k\ge1\) because that is how \cref{eq:dual_avg} produced \(z_k\), and at \(k=0\) because \(\eta_0=0\) and \(\lambda_0=0\) give \(z^{\eta_0}(\lambda_0)=x_0=z_0\). It gives
\[
\mathcal D_{k+1}
\ge
(1-t_k)\mathcal D_k+t_k\phi(z_{k+1}).
\]

With the limiting initialization \(\eta_0=0\), we have \(t_0=1\), so
\[
\mathcal D_1\ge\phi(z_1).
\]
Since \(x_1\in\argmin_{1\le i\le1}\phi(z_i)\), this gives
\[
\phi(x_1)\le\mathcal D_1.
\]
Assume now that
\[
\phi(x_k)\le\mathcal D_k.
\]
Because
\[
x_{k+1}\in\argmin_{1\le i\le k+1}\phi(z_i),
\]
we have
\[
\phi(x_{k+1})
\le
(1-t_k)\phi(x_k)+t_k\phi(z_{k+1}).
\]
Thus
\[
\phi(x_{k+1})
\le
(1-t_k)\mathcal D_k+t_k\phi(z_{k+1})
\le
\mathcal D_{k+1}.
\]
By induction,
\[
\phi(x_k)\le\mathcal D_k
\qquad \forall k\ge1.
\]
Finally,
\[
\mathcal D_k
=
\min_x S^{\eta_k}(x,\lambda_k)
\le
\phi_\star^{\eta_k},
\]
so
\[
\phi(x_k)\le\phi_\star^{\eta_k}.
\]
Thus the update satisfies \cref{algo:implicit_descent}.

It remains to estimate \(\eta_k^{-1}\). If \(\mu=0\), then
\[
t_k=\frac{\eta_k^{-1}}{L+\eta_k^{-1}},
\qquad
\eta_{k+1}^{-1}
=
\frac{L\eta_k^{-1}}{L+\eta_k^{-1}}.
\]
Equivalently,
\[
\eta_{k+1}=\eta_k+\frac1L.
\]
With \(\eta_0=0\), this gives
\[
\eta_k^{-1}=\frac{L}{k}.
\]

If \(\mu>0\), then, with \(\kappa:=\mu/L\),
\[
\eta_{k+1}^{-1}
=
(1-t_k)\eta_k^{-1}
=
\frac{(L-\mu)\eta_k^{-1}}{L+\eta_k^{-1}}.
\]
Hence
\[
\frac{L}{\eta_{k+1}^{-1}}
=
\frac{1}{1-\kappa}
\left(
\frac{L}{\eta_k^{-1}}+1
\right).
\]
With the limiting initialization \(\eta_0^{-1}=+\infty\), this yields
\[
\frac{L}{\eta_k^{-1}}
=
\sum_{i=1}^k(1-\kappa)^{-i}
=
\frac{(1-\kappa)^{-k}-1}{\kappa}.
\]
Thus
\[
\eta_k^{-1}
=
L\frac{\kappa}{(1-\kappa)^{-k}-1}
\le
L\frac{\kappa}{(1+\kappa)^k-1},
\]
where we used
\[
(1-\kappa)^{-1}\ge1+\kappa.
\]
Each term of the sum \(\sum_{i=1}^k(1-\kappa)^{-i}\) above is at least \(1\), so \(L/\eta_k^{-1}\ge k\), that is,
\[
\eta_k^{-1}\le\frac{L}{k}.
\]
Therefore
\[
\eta_k^{-1}
\le
L\min\left\{
\frac1k,
\frac{\kappa}{(1+\kappa)^k-1}
\right\}.
\]
\end{proof}

\clearpage
\subsection{Technical results on BLDF}

\begin{table}[t]
\centering
\renewcommand{\arraystretch}{1.15}
\begin{tabular}{lccc}
\toprule
\textbf{Geometry}
& $d_i(x)$
& \textbf{Dual form}
& \textbf{Primal form ($\Omega=\nabla d(\mathcal X')$)} \\
\midrule
Euclidean
& $\tfrac12 x^2$
& $1$
& $1$ \\[0.3em]

KL
& $x\log x-x$
& $\displaystyle
\sup_{w_1,w_2\in\Omega}\max_i
\left(\frac{e^{(w_2)_i}}{e^{(w_1)_i}}\right)^{1/2}$
& $\displaystyle
\sup_{x_1,x_2\in\mathcal X'}\max_i
\left(\frac{(x_2)_i}{(x_1)_i}\right)^{1/2}$ \\[0.9em]

Burg
& $-\log x$
& $\displaystyle
\sup_{w_1,w_2\in\Omega}\max_i
\frac{(w_1)_i}{(w_2)_i}$
& $\displaystyle
\sup_{x_1,x_2\in\mathcal X'}\max_i
\frac{(x_2)_i}{(x_1)_i}$ \\[0.9em]

Power-type
& $\tfrac{|x|^p}{p}$
& $\displaystyle
\sup_{w_1,w_2\in\Omega}\max_i
\left(\frac{|(w_2)_i|}{|(w_1)_i|}\right)^{\frac{2-p}{2(p-1)}}$
& $\displaystyle
\sup_{x_1,x_2\in\mathcal X'}\max_i
\left(\frac{|(x_1)_i|}{|(x_2)_i|}\right)^{\frac{p-2}{2}}$ \\
\bottomrule
\end{tabular}
\caption{Dual norm distortion factor $\rho_{d^*}(\Omega)$ for standard separable geometries, in dual and primal coordinates, where \(\mathcal X'\subset\operatorname{int}(\dom d)\) is the primal region with \(\nabla d(\mathcal X')=\Omega\). The factor is finite when the coordinate ratios over $\Omega$ are bounded, but diverges as a coordinate approaches the boundary of the domain for the KL and Burg geometries, or the origin for the power-type geometry. In the Euclidean case, it equals $1$ on every $\Omega$.}
\label{tab:rho_examples}
\end{table}

\begin{restatable}[BLDF under smoothness and strong convexity]{lemma}{BLDFSmoothStronglyConvex}
\label{lem:bldf_smooth_strongly_convex}
Assume that \(d^*\) is \(L_{d^*}\)-smooth and \(\mu_{d^*}\)-strongly convex on a convex set
\(\Omega\subset \operatorname{int}(\dom d^*)\). Then, for every \(s>0\),
\[
c_{d^*}(s,\Omega;2)
\le
\sqrt{\frac{L_{d^*}}{\mu_{d^*}}}.
\]
In particular, if \(d^*(u)=\frac12\|u\|^2\), then
\[
c_{d^*}(s,\Omega;2)=1.
\]
\end{restatable}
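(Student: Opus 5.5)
The plan is to sandwich each Bregman length in the definition of the dual BLDF between quadratic bounds coming from smoothness and strong convexity of \(d^*\) on \(\Omega\). The argument uses only first-order information along segments, so it does not need twice differentiability. For any anchor \(u\) and displacement \(w\) with \(u+[0,1]w\subset\Omega\), write
\[
D_{d^*}(u+w,u)=\int_0^1\big\langle\nabla d^*(u+\theta w)-\nabla d^*(u),\,w\big\rangle\,d\theta .
\]
Smoothness and strong convexity of \(d^*\) on the convex set \(\Omega\), applied at the points \(u+\theta w\in\Omega\), bound the integrand between \(\mu_{d^*}\theta\|w\|^2\) and \(L_{d^*}\theta\|w\|^2\). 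Integrating gives
\[
\tfrac{\mu_{d^*}}{2}\|w\|^2\le D_{d^*}(u+w,u)\le\tfrac{L_{d^*}}{2}\|w\|^2 .
\]
Here I interpret \(L_{d^*}\)-smoothness on \(\Omega\) as Lipschitz continuity of \(\nabla d^*\) with constant \(L_{d^*}\) on \(\Omega\), and strong convexity as monotonicity of \(\nabla d^*\) with modulus \(\mu_{d^*}\) on \(\Omega\). Since the whole segment lies in \(\Omega\), only the restriction to \(\Omega\) is needed, which is what the constraints in \cref{def:dual_bldf} guarantee.

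Next, fix an admissible triple \(u_1,u_2,v\neq0\) with \(u_1+[0,1]v\subset\Omega\) and \(u_2+[0,s]v\subset\Omega\).
\begin{itemize}
\item Apply the upper bound to the numerator with \(u=u_2\) and \(w=sv\): \(D_{d^*}(u_2+sv,u_2)\le\tfrac{L_{d^*}}{2}s^2\|v\|^2\).
\item Apply the lower bound to the denominator with \(u=u_1\) and \(w=v\): \(s^2D_{d^*}(u_1+v,u_1)\ge s^2\tfrac{\mu_{d^*}}{2}\|v\|^2\).
\end{itemize}
The ratio in \cref{def:dual_bldf} with \(\gamma=2\) is therefore at most \(L_{d^*}/\mu_{d^*}\). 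The factors \(s^2\) and \(\|v\|^2\) cancel exactly, which is precisely why \(\gamma=2\) is the right exponent. Taking the square root and then the supremum gives the claim for every \(s>0\).

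For the Euclidean case \(d^*=\tfrac12\|\cdot\|^2\), both bounds are equalities with \(L_{d^*}=\mu_{d^*}=1\). Every admissible ratio is then \(s^2\|v\|^2/(s^2\|v\|^2)=1\), so the supremum equals \(1\), provided the admissible set is nonempty (for instance, \(\Omega\) has nonempty interior, or at least contains a nondegenerate segment). I take that to be the intended meaning.

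The only delicate step is the localization: making sure the quadratic bounds are invoked only at points of \(\Omega\). Convexity of \(\Omega\), together with the segment constraints in the definition, handles this. The first-order integral form avoids any need for twice differentiability.
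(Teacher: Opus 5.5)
Your proposal is correct and follows essentially the same route as the paper: bound the numerator by $\tfrac{L_{d^*}}{2}s^2\|v\|^2$ and the denominator by $\tfrac{\mu_{d^*}}{2}\|v\|^2$, cancel $s^2\|v\|^2$, then take the square root and the supremum; the Euclidean case gives a ratio of exactly $1$. Your integral derivation of the quadratic bounds along the admissible segments only makes explicit the localization to $\Omega$ that the paper uses implicitly. The same holds for your remark that the equality case needs a nonempty admissible set.
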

\begin{proof}
By \(L_{d^*}\)-smoothness and \(\mu_{d^*}\)-strong convexity, for all admissible
\(u_1,u_2,v\),
\[
D_{d^*}(u_2+s v,u_2)
\le
\frac{L_{d^*}}{2}\|s v\|^2
=
s^2\frac{L_{d^*}}{2}\|v\|^2,
\]
and
\[
D_{d^*}(u_1+v,u_1)
\ge
\frac{\mu_{d^*}}{2}\|v\|^2.
\]
Therefore
\[
\frac{
D_{d^*}(u_2+s v,u_2)
}{
s^2D_{d^*}(u_1+v,u_1)
}
\le
\frac{L_{d^*}}{\mu_{d^*}}.
\]
Taking the square root and then the supremum over all admissible
\(u_1,u_2,v\) gives
\[
c_{d^*}(s,\Omega;2)
\le
\sqrt{\frac{L_{d^*}}{\mu_{d^*}}}.
\]
For \(d^*(u)=\frac12\|u\|^2\),
\[
D_{d^*}(u_2+s v,u_2)=\frac12s^2\|v\|^2,
\qquad
D_{d^*}(u_1+v,u_1)=\frac12\|v\|^2,
\]
so the ratio is exactly \(1\), hence
\[
c_{d^*}(s,\Omega;2)=1.
\]
\end{proof}

\begin{corollary}[Separable case]\label{cor:rho_separable}
Assume that $d^*$ is twice differentiable and separable on $\Omega$, i.e.,
\[
d^*(u)=\sum_{i=1}^n d_i^*(u_i).
\]
Then the dual norm distortion factor satisfies
\[
\rho_{d^*}(\Omega)
=
\sup_{w_1,w_2\in\Omega}
\max_{1\le i\le n}
\left(
\frac{(d_i^*)''((w_2)_i)}{(d_i^*)''((w_1)_i)}
\right)^{1/2}.
\]
\end{corollary}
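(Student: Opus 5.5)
The statement to prove is the last one shown, \cref{cor:rho_separable}: for a separable twice-differentiable \(d^*\), the dual norm distortion factor reduces to the coordinatewise ratio of second derivatives. My plan is to work directly from the definition of \(\rho_{d^*}(\Omega)\) as a double supremum of Hessian quadratic-form ratios, and to use that separability makes the Hessian diagonal. Concretely, if \(d^*(u)=\sum_i d_i^*(u_i)\), then \(\nabla^2 d^*(w)=\operatorname{diag}\big((d_1^*)''(w_1),\dots,(d_n^*)''(w_n)\big)\). Hence for fixed \(w_1,w_2\in\Omega\), the inner quantity is
\[
\sup_{\xi\neq0}\frac{\sum_i (d_i^*)''((w_2)_i)\,\xi_i^2}{\sum_i (d_i^*)''((w_1)_i)\,\xi_i^2}.
\]
Throughout, I assume positive definiteness of the Hessian on \(\Omega\), as stipulated after \cref{eq:primal_dual_norm}, so all denominators \((d_i^*)''((w_1)_i)\) are strictly positive.

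The key step is to show that this inner supremum equals \(\max_i a_i/b_i\), where \(a_i=(d_i^*)''((w_2)_i)\) and \(b_i=(d_i^*)''((w_1)_i)>0\).
\begin{itemize}
\item For the upper bound, note \(a_i\xi_i^2\le (\max_j a_j/b_j)\, b_i\xi_i^2\) for every \(i\). Summing over \(i\) gives the claimed inequality, which is the mediant inequality for ratios of sums of nonnegative terms.
\item For the lower bound, take \(\xi=e_{i^\star}\), the standard basis vector at an index \(i^\star\) attaining the maximum. The ratio then equals \(a_{i^\star}/b_{i^\star}\) exactly.
\end{itemize}
Equivalently, \(\max_i a_i/b_i\) is the largest generalized eigenvalue of the pencil \((\operatorname{diag}(a),\operatorname{diag}(b))\), and one could cite the Rayleigh quotient instead.

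Finally, I take square roots, which commute with suprema and maxima since \(t\mapsto\sqrt t\) is increasing on \([0,\infty)\). Taking the outer supremum over \(w_1,w_2\in\Omega\) then yields exactly the displayed formula.

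I expect no real obstacle. The only point needing care is the degenerate case where some \((d_i^*)''\) vanishes, and that is excluded by the standing positive-definiteness hypothesis. A ratio equal to \(+\infty\) would also be consistent on both sides, because the single-coordinate choice of \(\xi\) realizes it.
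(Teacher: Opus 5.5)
Your proposal is correct and takes the same route as the paper: separability makes the Hessian diagonal, the supremum over $\xi$ reduces to the coordinatewise maximum of second-derivative ratios, and the outer supremum over $w_1,w_2\in\Omega$ gives the formula. The one difference is that you justify the reduction explicitly, with the termwise upper bound and a standard basis vector for the lower bound, whereas the paper asserts that step without proof.
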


\begin{proof}
For separable $d^*$, \cref{eq:primal_dual_norm} gives
\[
\|\xi\|_{w,*}^2=\tfrac12\sum_{i=1}^n (d_i^*)''(w_i)\,\xi_i^2 ,
\]
and the factor $\tfrac12$ cancels in the ratio below.
Hence, for any $w_1,w_2\in\Omega$,
\[
\sup_{\xi\neq 0}\frac{\|\xi\|_{w_2,*}}{\|\xi\|_{w_1,*}}
=
\max_{1\le i\le n}
\left(
\frac{(d_i^*)''((w_2)_i)}{(d_i^*)''((w_1)_i)}
\right)^{1/2},
\]
and the result follows by taking the supremum over $w_1,w_2\in\Omega$.
\end{proof}

\DualBLDFTwiceDiff*

\begin{proof}
Fix admissible $u_1,u_2\in\Omega$ and $v\neq 0$. Since $u_1+[0,1]v\subset\Omega$ and
$u_2+[0,s]v\subset\Omega$, the integral representation of the Bregman divergence gives
\[
D_{d^*}(u_1+v,u_1)
=
\int_0^1 (1-r)\, v^\top \nabla^2 d^*(u_1+r v)\, v\,dr,
\]
and this holds under pointwise twice differentiability alone. Indeed, \(g(r):=d^*(u_1+rv)\) is
convex, so \(g'\) is nondecreasing, and \(g'\) is differentiable everywhere on \([0,1]\) with
\(g''\ge0\) and \(\int_0^1g''\le g'(1)-g'(0)<\infty\); a function differentiable everywhere with
integrable derivative is absolutely continuous, so the fundamental theorem of calculus applies to
\(g'\), and one integration by parts produces the weight \((1-r)\). Note also that
\(D_{d^*}(u_1+v,u_1)>0\): \(d\) is Legendre, hence so is \(d^*\), hence \(d^*\) is strictly convex
on the convex set \(\Omega\subseteq\operatorname{int}(\dom d^*)\), so the denominators below do not
vanish. Explicitly,
\[
D_{d^*}(u_1+v,u_1)
=
\int_0^1 (1-r)\, v^\top \nabla^2 d^*(u_1+r v)\, v\,dr,
\]
and
\[
D_{d^*}(u_2+s v,u_2)
=
s^2\int_0^1 (1-r)\, v^\top \nabla^2 d^*(u_2+r s v)\, v\,dr.
\]
Both integrands are nonnegative and $\int_0^1(1-r)\,dr=\tfrac12$, so each divergence lies between
the extreme values of $\|v\|_{\cdot,*}^2$ on its segment,
\[
\inf_{w\in u_1+[0,1]v}\|v\|_{w,*}^2
\;\le\;
D_{d^*}(u_1+v,u_1)
\;\le\;
\sup_{w\in u_1+[0,1]v}\|v\|_{w,*}^2,
\]
and likewise \(D_{d^*}(u_2+s v,u_2)\in s^2[\inf,\sup]\) over $u_2+[0,s]v$. No continuity of
$\nabla^2d^*$ is required. Therefore,
\[
\left(
\frac{D_{d^*}(u_2+s v,u_2)}
{s^\gamma D_{d^*}(u_1+v,u_1)}
\right)^{1/2}
\le
s^{1-\gamma/2}
\left(
\frac{\sup_{w_2\in u_2+[0,s]v}\|v\|_{w_2,*}^2}
     {\inf_{w_1\in u_1+[0,1]v}\|v\|_{w_1,*}^2}
\right)^{1/2}
\le
s^{1-\gamma/2}\rho_{d^*}(\Omega),
\]
the last step because both segments lie in $\Omega$. Taking the supremum over all admissible
$u_1,u_2,v$ yields the claim.
\end{proof}

With the following additional regularity assumptions, the two factors are equal.

\begin{restatable}{proposition}{DualBLDFEquality}\label{prop:dual_bldf_equality}
Assume that $\Omega$ is open and that $\nabla^2d^*$ is continuous and positive definite
on $\Omega$. Then
\[
c_{d^*}(s,\Omega;2)=\rho_{d^*}(\Omega)
\quad\text{for every }s>0,
\qquad
c_{d^*}(s,\Omega;\gamma)=s^{1-\gamma/2}\rho_{d^*}(\Omega)
\quad\text{for every }\gamma>0 .
\]
\end{restatable}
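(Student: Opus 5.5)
By \cref{prop:dual_bldf_twice_diff}, in its general-\(\gamma\) form, we already have \(c_{d^*}(s,\Omega;\gamma)\le s^{1-\gamma/2}\rho_{d^*}(\Omega)\). It therefore suffices to prove the reverse inequality \(c_{d^*}(s,\Omega;\gamma)\ge s^{1-\gamma/2}\rho_{d^*}(\Omega)\). The case \(\gamma=2\) is the special case in which the prefactor equals \(1\). The plan is to realize the supremum defining \(\rho_{d^*}\) as a limit of admissible triples \((u_1,u_2,v)\) with \(v\) shrinking to zero, so that each Bregman divergence is governed by the Hessian at its anchor.

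Fix \(w_1,w_2\in\Omega\) and \(\xi\neq0\). Since \(\Omega\) is open, there is \(r>0\) such that the balls \(B(w_1,r)\) and \(B(w_2,r)\) lie in \(\Omega\). For \(\varepsilon>0\) small, take \(u_1=w_1\), \(u_2=w_2\) and \(v=\varepsilon\xi\). The admissibility conditions \(w_1+[0,1]\varepsilon\xi\subset\Omega\) and \(w_2+[0,s]\varepsilon\xi\subset\Omega\) hold as soon as \(\varepsilon\max\{1,s\}\|\xi\|<r\). Note that convexity of \(\Omega\) is not needed here, because only short segments inside the balls are used.

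Next I use the integral representation from the proof of \cref{prop:dual_bldf_twice_diff}:
\[
D_{d^*}(w_2+s\varepsilon\xi,w_2)=s^2\varepsilon^2\int_0^1(1-\theta)\,\xi^\top\nabla^2d^*(w_2+\theta s\varepsilon\xi)\,\xi\,d\theta .
\]
By continuity of \(\nabla^2d^*\) at \(w_2\), the integral converges to \(\tfrac12\xi^\top\nabla^2d^*(w_2)\xi\) as \(\varepsilon\to0\); uniform continuity on a compact neighbourhood of \(w_2\) justifies passing to the limit. The same argument gives \(D_{d^*}(w_1+\varepsilon\xi,w_1)=\varepsilon^2\big(\tfrac12\xi^\top\nabla^2d^*(w_1)\xi+o(1)\big)\).

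Positive definiteness at \(w_1\) makes the limiting denominator strictly positive, so the ratio converges:
\[
\frac{D_{d^*}(w_2+s\varepsilon\xi,w_2)}{s^\gamma D_{d^*}(w_1+\varepsilon\xi,w_1)}\;\longrightarrow\; s^{2-\gamma}\,\frac{\xi^\top\nabla^2d^*(w_2)\xi}{\xi^\top\nabla^2d^*(w_1)\xi}.
\]
Since \(c_{d^*}(s,\Omega;\gamma)\) is a supremum over admissible triples, it is at least the square root of this limit, namely \(s^{1-\gamma/2}\big(\xi^\top\nabla^2d^*(w_2)\xi/\xi^\top\nabla^2d^*(w_1)\xi\big)^{1/2}\). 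Taking the supremum over \(w_1,w_2\in\Omega\) and \(\xi\neq0\) gives \(c_{d^*}(s,\Omega;\gamma)\ge s^{1-\gamma/2}\rho_{d^*}(\Omega)\). This inequality also holds when \(\rho_{d^*}(\Omega)=+\infty\), in which case both sides are infinite.

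The only delicate step is the limit interchange. It needs only continuity of \(\nabla^2d^*\) at the two anchors and positive definiteness of the Hessian at \(w_1\), which is exactly where the extra hypotheses of the proposition are used. Combining the lower bound with the upper bound of \cref{prop:dual_bldf_twice_diff} yields equality for every \(s>0\) and \(\gamma>0\).
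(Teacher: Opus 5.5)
Your proposal is correct and follows essentially the same route as the paper. The upper bound comes from the general-$\gamma$ chain in the proof of \cref{prop:dual_bldf_twice_diff}. The lower bound comes from admissible triples with $v=\varepsilon\xi$ shrinking to zero, using continuity of $\nabla^2 d^*$ at the anchors and positive definiteness at the denominator anchor. The only difference is cosmetic: the paper applies the weighted mean-value theorem to write each divergence as a local norm at a point $\bar w_i(\varepsilon)$ converging to the anchor, whereas you pass to the limit directly inside the integral representation.
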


\begin{proof}
The proof of \cref{prop:dual_bldf_twice_diff} gives \(\le\) at every \(\gamma\), although the
proposition is stated at \(\gamma=2\); only the reverse direction is at issue. Fix
\(u_1,u_2\in\Omega\) and \(\xi\neq0\), and take \(v=\varepsilon\xi\). Since \(\Omega\) is open,
both \(u_1+[0,1]v\) and \(u_2+[0,s]v\) lie in \(\Omega\) once \(\varepsilon\) is small enough, so
the pair is admissible. Here \(\nabla^2d^*\) is continuous, so the weighted mean-value theorem
applies to the integral representation used in the proof of \cref{prop:dual_bldf_twice_diff} and
gives points \(\bar w_1(\varepsilon)\in u_1+[0,\varepsilon]\xi\) and \(\bar w_2(\varepsilon)\in
u_2+[0,s\varepsilon]\xi\) with \(D_{d^*}(u_1+v,u_1)=\|v\|_{\bar w_1(\varepsilon),*}^2\) and
\(D_{d^*}(u_2+sv,u_2)=s^2\|v\|_{\bar w_2(\varepsilon),*}^2\). Both points converge to \(u_1\) and
\(u_2\) as \(\varepsilon\downarrow0\), and \(\nabla^2d^*\) is positive definite there, so
\[
\left(
\frac{D_{d^*}(u_2+s v,u_2)}
{s^\gamma D_{d^*}(u_1+v,u_1)}
\right)^{1/2}
=
s^{1-\gamma/2}\frac{\|\xi\|_{\bar w_2(\varepsilon),*}}{\|\xi\|_{\bar w_1(\varepsilon),*}}
\xrightarrow[\varepsilon\downarrow0]{}
s^{1-\gamma/2}\frac{\|\xi\|_{u_2,*}}{\|\xi\|_{u_1,*}} .
\]
The left-hand side is bounded above by \(c_{d^*}(s,\Omega;\gamma)\) for every \(\varepsilon\), so
the limit is too. Taking the supremum over \(u_1,u_2\in\Omega\) and \(\xi\neq0\) gives
\(c_{d^*}(s,\Omega;\gamma)\ge s^{1-\gamma/2}\rho_{d^*}(\Omega)\).
\end{proof}

\clearpage

\subsection{Technical results on Accelerated Method}

Throughout this subsection, \(\alpha_k:=\mu+\eta_k^{-1}\).

\begin{restatable}[Implicit recursive accelerated bound]{lemma}{BoundAccImplicit}
\label{lem:bound_acc_implicit}
Assume that \(t_k=1\), or that
\[
\phi_k^{\mathrm{best}}\le \mathcal D^{\eta_k}(\lambda_k).
\]
If the inner loop of \Cref{algo:bregman_adaptive} exits, i.e.,
\[
\phi_{k+1}^{\mathrm{best}}
\le
(1-t_k)\phi_k^{\mathrm{best}}
+
t_k\phi_k^{\mathrm{low}}
-
\left(\mu+\eta_{k+1}^{-1}\right)D_d(z_k,z_{k+1}),
\]
then
\[
\phi_{k+1}^{\mathrm{best}}
=
\phi(x_{k+1}^{\mathrm{best}})
\le
\phi_\star^{\eta_{k+1}}.
\]
\end{restatable}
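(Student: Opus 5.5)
The plan is to chain the exit condition with \cref{lem:pd_lower_step}, applied with \(b_k=\phi_k^{\mathrm{best}}\), and then use weak duality. Concretely, I would verify that the iterates of \cref{algo:bregman_adaptive} satisfy the hypotheses of \cref{lem:pd_lower_step}. The dual step of \cref{algo:accstep} is \(\lambda_{k+1}=(1-t_k)\lambda_k+t_k\nabla f(y_k)\), because \(\nabla\phi(y_k)-\mu(\nabla d(y_k)-\nabla d(x_0))=\nabla f(y_k)\). The primal map makes \(z_{k+1}\) the unconstrained minimizer of \(S^{\eta_{k+1}}(\cdot,\lambda_{k+1})\), by \cref{eq:primal_map} and \cref{eq:primal_best_response}. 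Likewise \(z_k\) minimizes \(S^{\eta_k}(\cdot,\lambda_k)\): at \(k=0\) this holds since \(\lambda_0=0\) gives \(z_0=x_0\), and for \(k\ge1\) it holds because \(z_k\) was produced by the primal map at the previous accepted trial. Finally, the parameter choice gives \(\eta_{k+1}^{-1}=(1-t_k)\eta_k^{-1}\). This follows from \(\eta_{k+1}^{-1}=M_kt_k^2-\mu\) together with the fact that \(t_k\) is the positive root of \(M_kt^2=(1-t)(\mu+\eta_k^{-1})+t\mu\). That root is exactly the closed form in the algorithm, and I would check this by a one-line quadratic computation.

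The argument then splits into two cases. If \(\phi_k^{\mathrm{best}}\le\mathcal D^{\eta_k}(\lambda_k)\), I would apply the main conclusion of \cref{lem:pd_lower_step} with \(b_k=\phi_k^{\mathrm{best}}\). If instead \(t_k=1\) (the case \(k=0\), or whenever the \(\min\) clips), I would use the second part of the lemma. That part requires neither the hypothesis on \(b_k\) nor the \(\eta\) recursion, and the term \((1-t_k)\phi_k^{\mathrm{best}}\) vanishes anyway. In both cases the right-hand side of the lemma is exactly the exit quantity \(\mathcal L_{k+1}(z_{k+1})\) from \cref{eq:computable_lowerbound}, so the exit condition gives \(\phi_{k+1}^{\mathrm{best}}\le\mathcal D^{\eta_{k+1}}(\lambda_{k+1})\).

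Weak duality, recorded after \cref{eq:primal_map} and in \cref{app:saddle_formulation}, then gives \(\mathcal D^{\eta_{k+1}}(\lambda_{k+1})\le\phi_\star^{\eta_{k+1}}\). The equality \(\phi_{k+1}^{\mathrm{best}}=\phi(x_{k+1}^{\mathrm{best}})\) is a bookkeeping invariant of the algorithm, since the two quantities are always updated together.

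The main obstacle is matching the \(t_k\) and \(\eta_{k+1}\) actually used in the accepted trial to the lemma's recursion. This matters in particular when the \(\min\{1,\cdot\}\) clips \(t_k\) to \(1\) for \(k\ge1\), since then \(\eta_{k+1}^{-1}=M_k-\mu\) need not equal \(0=(1-t_k)\eta_k^{-1}\). This is precisely why the statement offers the \(t_k=1\) alternative, and why I would route that case through the second part of \cref{lem:pd_lower_step}, which holds for any \(\eta_{k+1}>0\). I also need \(\mathcal D^{\eta_k}(\lambda_k)\) to refer to the accepted previous iterate, not a rejected trial. This holds because each trial recomputes the step from the stored \((x_k^{\mathrm{best}},z_k,\lambda_k)\).
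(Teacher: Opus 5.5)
Your proposal is correct and is essentially the paper's proof. As there, you apply \cref{lem:pd_lower_step} with \(b_k=\phi_k^{\mathrm{best}}\), using its second conclusion when \(t_k=1\), chain it with the exit condition, and conclude by weak duality \(\mathcal D^{\eta_{k+1}}(\lambda_{k+1})\le\phi_\star^{\eta_{k+1}}\); your explicit check of that lemma's hypotheses (the dual update, the primal-map minimizers, and the \(\eta\)-recursion from the root formula) is a detail the paper leaves implicit. The clipping case you guard against never actually occurs. Every trial has \(M_k>\mu\), so the quadratic \(M_kt^2+\eta_k^{-1}t-(\mu+\eta_k^{-1})\) equals \(M_k-\mu>0\) at \(t=1\), which forces \(t_k<1\) for all \(k\ge1\); hence \(t_k=1\) arises only at \(k=0\).
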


\begin{proof}
Apply \cref{lem:pd_lower_step} with \(b_k=\phi_k^{\mathrm{best}}\); when \(t_k=1\), apply instead
its second conclusion, which carries no hypothesis on \(\phi_k^{\mathrm{best}}\) and whose
right-hand side is the same because \((1-t_k)=0\). The exit condition gives
\[
\phi_{k+1}^{\mathrm{best}}
\le
\mathcal D^{\eta_{k+1}}(\lambda_{k+1}).
\]
Since \(S^{\eta_{k+1}}(x,\lambda_{k+1})\le \phi^{\eta_{k+1}}(x)\) for all \(x\),
\[
\mathcal D^{\eta_{k+1}}(\lambda_{k+1})
=
\min_x S^{\eta_{k+1}}(x,\lambda_{k+1})
\le
\min_x \phi^{\eta_{k+1}}(x)
=
\phi_\star^{\eta_{k+1}}.
\]
Therefore
\[
\phi_{k+1}^{\mathrm{best}}
\le
\phi_\star^{\eta_{k+1}}.
\]
\end{proof}

\GenericAcceleratedRateMk*

\begin{proof}
By \cref{lem:bound_acc_implicit}, the invariant and the exit condition imply
\[
\phi_{k+1}^{\mathrm{best}}
\le
\phi_\star^{\eta_{k+1}}.
\]
Thus the iterates satisfy the implicit descent condition. Applying
\cref{thm:implicit_descent_rate} with \(x_{k+1}=x_{k+1}^{\mathrm{best}}\) gives
\[
\phi_{k+1}^{\mathrm{best}}-\phi_\star
\le
\eta_{k+1}^{-1}D_d(x_\star,x_0).
\]
The monotonicity
\[
\phi_{k+1}^{\mathrm{best}}\le \phi_k^{\mathrm{best}}
\]
follows from the definition of the best value.

It remains to estimate \(\eta_k^{-1}\). First assume \(\mu=0\). Then
\[
\eta_{k+1}^{-1}=M_kt_k^2,
\qquad
t_k=1-\frac{\eta_k}{\eta_{k+1}}.
\]
Hence
\[
\eta_{k+1}
=
M_k(\eta_{k+1}-\eta_k)^2,
\]
so
\[
\eta_{k+1}-\eta_k
=
\sqrt{\frac{\eta_{k+1}}{M_k}}.
\]
Therefore
\[
\sqrt{\eta_{k+1}}-\sqrt{\eta_k}
=
\frac{\eta_{k+1}-\eta_k}
{\sqrt{\eta_{k+1}}+\sqrt{\eta_k}}
\ge
\frac{1}{2\sqrt{M_k}}.
\]
With the limiting initialization \(\eta_0=0\), summing gives
\[
\sqrt{\eta_k}
\ge
\frac12\sum_{i=0}^{k-1}M_i^{-1/2},
\]
hence
\[
\eta_k^{-1}
\le
\frac{4}{
\left(\sum_{i=0}^{k-1}M_i^{-1/2}\right)^2}.
\]
If \(M_i\le M\) for all \(i\ge K\), then
\[
\sqrt{\eta_k}
\ge
\sqrt{\eta_K}+\frac{k-K}{2\sqrt M},
\]
and thus
\[
\eta_k^{-1}
\le
\frac{\eta_K^{-1}}{
\left(
1+\frac{k-K}{2\sqrt{M\eta_K}}
\right)^2}.
\]

Now assume \(\mu>0\). Since
\[
\mu+\eta_{k+1}^{-1}=M_kt_k^2,
\]
we have
\[
t_k\ge \sqrt{\frac{\mu}{M_k}}.
\]
Thus
\[
\eta_{k+1}^{-1}
=
(1-t_k)\eta_k^{-1}
\le
\left(1-\sqrt{\frac{\mu}{M_k}}\right)\eta_k^{-1}.
\]
With the limiting initialization \(\eta_0=0\), we have \(t_0=1\), hence
\[
\eta_1^{-1}=M_0-\mu.
\]
Iterating from \(i=1\) to \(k-1\) gives
\[
\eta_k^{-1}
\le
(M_0-\mu)
\prod_{i=1}^{k-1}
\left(1-\sqrt{\frac{\mu}{M_i}}\right).
\]
If \(M_i\le M\) for all \(i\ge K\), then
\[
1-\sqrt{\frac{\mu}{M_i}}
\le
1-\sqrt{\frac{\mu}{M}},
\]
and therefore
\[
\eta_k^{-1}
\le
\eta_K^{-1}
\left(
1-\sqrt{\frac{\mu}{M}}
\right)^{k-K}.
\]
\end{proof}

\begin{restatable}[Generic accelerated bound]{lemma}{BoundAccGeneric}
\label{lem:bound_acc_generic}
Assume that \(t_k=1\), or that
\[
\phi(x_k)\le \mathcal D^{\eta_k}(\lambda_k),
\qquad
\lambda_{k+1}=(1-t_k)\lambda_k+t_k\nabla f(y_k),
\qquad
\eta_{k+1}^{-1}=(1-t_k)\eta_k^{-1},
\]
and set
\[
y_k=(1-\tau_k)x_k+\tau_kz_k.
\]
Let
\[
\mathcal L_{k+1}(z_{k+1})
:=
(1-t_k)\phi(x_k)+t_k\phi_k^{\mathrm{low}}-(\mu+\eta_{k+1}^{-1})D_d(z_k,z_{k+1}),
\]
with \(\phi_k^{\mathrm{low}}\) as in \cref{lem:pd_lower_step}. This is the right-hand side of the
exit condition of \cref{algo:bregman_adaptive} with \(\phi_k^{\mathrm{best}}=\phi(x_k)\), and
\(\mathcal L_{k+1}(z_{k+1})\le\mathcal D^{\eta_{k+1}}(\lambda_{k+1})\) by \cref{lem:pd_lower_step}.
Then \(x_{k+1}\) in \(\Cref{algo:accstep}\) satisfies
\[
\begin{aligned}
\mathcal L_{k+1}(z_{k+1})
\ge\;&
\phi(x_{k+1})
+
LD_d(y_k,x_{k+1})
-
(\mu+\eta_{k+1}^{-1})D_d(z_k,z_{k+1})
\\
&+
(1-t_k)\mu D_d(x_k,y_k)
+
t_k\mu D_d(z_k,y_k)
\\
&+
(\tau_k-t_k)
\langle\nabla\phi(y_k),x_k-z_k\rangle .
\end{aligned}
\]
\end{restatable}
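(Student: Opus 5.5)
Since \(\mathcal L_{k+1}(z_{k+1})\) already contains the term \(-(\mu+\eta_{k+1}^{-1})D_d(z_k,z_{k+1})\), which appears unchanged on the right-hand side, the plan is to show
\[
(1-t_k)\phi(x_k)+t_k\phi_k^{\mathrm{low}}
\;\ge\;
\phi(x_{k+1})+LD_d(y_k,x_{k+1})
+(1-t_k)\mu D_d(x_k,y_k)+t_k\mu D_d(z_k,y_k)
+(\tau_k-t_k)\langle\nabla\phi(y_k),x_k-z_k\rangle .
\]
The inequality \(\mathcal L_{k+1}(z_{k+1})\le\mathcal D^{\eta_{k+1}}(\lambda_{k+1})\) quoted in the statement is only context from \cref{lem:pd_lower_step} and is not needed for this bound.

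\textbf{Step 1 (lower bound on \(\phi(x_k)\)).} Apply relative \(\mu\)-strong convexity of \(\phi\) (\cref{app:anchored_decomposition}) at the base point \(y_k\):
\[
\phi(x_k)\ge\phi(y_k)+\langle\nabla\phi(y_k),x_k-y_k\rangle+\mu D_d(x_k,y_k).
\]
Multiply this by \((1-t_k)\), and add \(t_k\phi_k^{\mathrm{low}}=t_k\bigl(\phi(y_k)+\langle\nabla\phi(y_k),z_k-y_k\rangle+\mu D_d(z_k,y_k)\bigr)\). The result is
\[
(1-t_k)\phi(x_k)+t_k\phi_k^{\mathrm{low}}
\ge
\phi(y_k)+\bigl\langle\nabla\phi(y_k),(1-t_k)x_k+t_kz_k-y_k\bigr\rangle
+(1-t_k)\mu D_d(x_k,y_k)+t_k\mu D_d(z_k,y_k).
\]
Next substitute \(y_k=(1-\tau_k)x_k+\tau_kz_k\). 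This gives \((1-t_k)x_k+t_kz_k-y_k=(\tau_k-t_k)(x_k-z_k)\), so the linear term becomes exactly \((\tau_k-t_k)\langle\nabla\phi(y_k),x_k-z_k\rangle\).

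\textbf{Step 2 (primal decrease).} It remains to show \(\phi(y_k)\ge\phi(x_{k+1})+LD_d(y_k,x_{k+1})\). This is the exit test of \textsc{Backtracking-GD} (line~\ref{line:primal_decrease}), namely \(\phi(x_{k+1})\le\phi(y_k)-L_{\mathrm{new}}D_d(y_k,x_{k+1})\), with \(L\) standing for the accepted constant \(L_{\mathrm{new}}\). The identity \(\mathcal U(x_{k+1},y_k)=\phi(y_k)-L_{\mathrm{new}}D_d(y_k,x_{k+1})\) is the exact three-point equality \cref{eq:three_point_exact}. It applies because \(x_{k+1}\) is the unconstrained minimizer of an affine function plus \(L_{\mathrm{new}}D_d(\cdot,y_k)\). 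Evaluated at \(x=y_k\), it reads \(\phi(y_k)=\mathcal U(x_{k+1},y_k)+L_{\mathrm{new}}D_d(y_k,x_{k+1})\).

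\textbf{Main obstacle: which constant is meant by \(L\).} If \(L\) means the true relative smoothness constant, the bound \(\phi(x_{k+1})\le\mathcal U(x_{k+1},y_k)\) is not enough. The constant in \(LD_d(y_k,x_{k+1})\) is tied to the step actually taken, so the statement should be read with the accepted \(L_{\mathrm{cur}}\) in place of \(L\). Under that reading the decrease comes directly from the backtracking test. Alternatively, if the analysis fixes \(L_{\mathrm{cur}}=L\), the descent lemma at \(x_{k+1}\) plus the three-point equality gives the same conclusion. Chaining Step 1, Step 2, and the common \(D_d(z_k,z_{k+1})\) term then completes the proof. The case \(t_k=1\) needs no special treatment, because the hypotheses on \(\lambda_k\) are used only in the cited bound \(\mathcal L_{k+1}\le\mathcal D^{\eta_{k+1}}\).
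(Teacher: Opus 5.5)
Your proposal is correct and follows the paper's proof essentially step for step. Both arguments use relative strong convexity of $\phi$ at $y_k$, weighted by $(1-t_k)$; the identity $(1-t_k)x_k+t_kz_k-y_k=(\tau_k-t_k)(x_k-z_k)$; and the backtracking decrease $\phi(x_{k+1})\le\phi(y_k)-LD_d(y_k,x_{k+1})$. Your reading of $L$ as the constant accepted by \textsc{Backtracking-GD} is also the paper's convention, stated explicitly in \cref{prop:strongly_convex_rate_dual_distortion}.
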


\begin{proof}
The bound \(\mathcal L_{k+1}(z_{k+1})\le\mathcal D^{\eta_{k+1}}(\lambda_{k+1})\) is
\cref{lem:pd_lower_step} with \(b_k=\phi(x_k)\), or its second conclusion when \(t_k=1\). When
\(t_k=1\), every term below carrying the factor \((1-t_k)\) — including the strong convexity
substitution for \(\phi(x_k)\) — is absent. Expanding \(\phi_k^{\mathrm{low}}\) gives
\[
\begin{aligned}
\mathcal L_{k+1}(z_{k+1})
=\;&
(1-t_k)\phi(x_k)
+
t_k\phi(y_k)
+
t_k\langle\nabla\phi(y_k),z_k-y_k\rangle
\\
&+
t_k\mu D_d(z_k,y_k)
-
(\mu+\eta_{k+1}^{-1})D_d(z_k,z_{k+1}).
\end{aligned}
\]
By relative \(\mu\)-strong convexity of \(\phi\),
\[
\phi(x_k)
\ge
\phi(y_k)
+
\langle\nabla\phi(y_k),x_k-y_k\rangle
+
\mu D_d(x_k,y_k).
\]
Substituting this lower bound yields
\[
\begin{aligned}
\mathcal L_{k+1}(z_{k+1})
\ge\;&
\phi(y_k)
+
(1-t_k)\mu D_d(x_k,y_k)
+
t_k\mu D_d(z_k,y_k)
\\
&+
\left\langle
\nabla\phi(y_k),
(1-t_k)(x_k-y_k)+t_k(z_k-y_k)
\right\rangle
\\
&-
(\mu+\eta_{k+1}^{-1})D_d(z_k,z_{k+1}).
\end{aligned}
\]
Since
\[
y_k=(1-\tau_k)x_k+\tau_kz_k,
\]
we have
\[
(1-t_k)(x_k-y_k)+t_k(z_k-y_k)
=
(\tau_k-t_k)(x_k-z_k).
\]
Finally, the descent step gives
\[
\phi(x_{k+1})
\le
\phi(y_k)-LD_d(y_k,x_{k+1}),
\]
which gives the claim.
\end{proof}

\begin{restatable}[Convex specialization]{lemma}{BoundAccConvex}
\label{lem:bound_acc_convex}
Assume \(\mu=0\) and choose \(\tau_k=t_k\). Then
\[
\mathcal L_{k+1}(z_{k+1})
\ge
\phi(x_{k+1})
+
LD_d(y_k,x_{k+1})
-
\alpha_{k+1}D_d(z_k,z_{k+1}).
\]
\end{restatable}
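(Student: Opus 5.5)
This follows directly from \cref{lem:bound_acc_generic} by specializing its parameters, so no new estimate is needed. We invoke \cref{lem:bound_acc_generic} under its standing hypotheses: either \(t_k=1\), or the invariant \(\phi(x_k)\le\mathcal D^{\eta_k}(\lambda_k)\) holds together with the dual and \(\eta\) recursions, which are exactly the updates of \cref{algo:accstep}. Its conclusion bounds \(\mathcal L_{k+1}(z_{k+1})\) from below by five groups of terms:
\begin{itemize}
\item the primal value \(\phi(x_{k+1})\) and the primal decrease \(LD_d(y_k,x_{k+1})\);
\item the dual decrease \(-(\mu+\eta_{k+1}^{-1})D_d(z_k,z_{k+1})\);
\item two strong-convexity terms, \((1-t_k)\mu D_d(x_k,y_k)\) and \(t_k\mu D_d(z_k,y_k)\);
\item the coupling term \((\tau_k-t_k)\langle\nabla\phi(y_k),x_k-z_k\rangle\).
\end{itemize}

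We then set \(\mu=0\) and \(\tau_k=t_k\). With \(\mu=0\), both strong-convexity terms vanish identically. With \(\tau_k=t_k\), the coupling term has coefficient zero and also vanishes. This matches the algorithm, since the formula for \(\tau_k\) in \cref{algo:bregman_adaptive} reduces to \(\tau_k=t_k\) when \(\mu=0\). Finally, \(\mu+\eta_{k+1}^{-1}=\alpha_{k+1}\) by the notation fixed at the start of this subsection. What remains is exactly the claimed inequality.

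The only step needing care is the primal decrease constant. \cref{lem:bound_acc_generic} writes \(L\), while \textsc{Backtracking-GD} certifies \(\phi(x_{k+1})\le\phi(y_k)-L_{\mathrm{cur}}D_d(y_k,x_{k+1})\) with the accepted constant \(L_{\mathrm{cur}}\). We will read \(L\) in both statements as the accepted smoothness estimate, which is what the descent step in the proof of \cref{lem:bound_acc_generic} actually uses. If the true constant \(L\) is intended instead, it suffices to note that the backtracking exit test holds at any \(L_{\mathrm{new}}\ge L\). With that convention fixed, the proof is a direct substitution and has no genuine obstacle.
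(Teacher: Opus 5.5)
Your proof is correct and is the same as the paper's: specialize \cref{lem:bound_acc_generic}, where \(\mu=0\) removes the two strong-convexity terms and \(\tau_k=t_k\) removes the coupling term. One caveat about your fallback remark: the backtracking test passing at every \(L_{\mathrm{new}}\ge L\) only guarantees that the search terminates, not that the accepted \(L_{\mathrm{cur}}\) is at least \(L\), so the ``true \(L\)'' reading does not follow when a smaller \(L_{\mathrm{cur}}\) is accepted. Your primary convention, that \(L\) is the constant the gradient step actually uses, is the one the paper adopts, so the proof is unaffected.
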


\begin{proof}
Apply \cref{lem:bound_acc_generic}. Since \(\mu=0\), the two terms multiplied by
\(\mu\) vanish. Since \(\tau_k=t_k\), the scalar-product term also vanishes.
\end{proof}

\begin{restatable}[Convex one-step bound under dual norm distortion]{proposition}{ConvexOneStepDualDistortion}
\label{prop:convex_rate_dual_distortion}
Under \cref{ass:setting}, assume \(\mu=0\). Let \(\rho_k\ge1\) be such that the relevant dual segments of
iteration \(k\) are contained in a convex set \(\Omega_k\) satisfying
\[
    \rho_{d^*}(\Omega_k)\le \rho_k .
\]
Let \(M_k\ge\rho_k^2L\) and choose
\[
    \tau_k=t_k,
    \qquad
    \eta_{k+1}^{-1}=M_k t_k^2,
    \qquad
    \eta_{k+1}^{-1}=(1-t_k)\eta_k^{-1}.
\]
Then the accelerated step satisfies
\[
    \mathcal L_{k+1}(z_{k+1})
    \ge
    \phi(x_{k+1}).
\]
Hence the exit condition holds at every acceleration parameter
\(M_k\ge\rho_k^2L\), with \(\mathcal L_{k+1}(z_{k+1})\) as in \cref{lem:bound_acc_generic}.
\end{restatable}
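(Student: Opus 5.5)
The plan is to reduce the exit condition to the comparison \cref{eq:decrease_comparison}, and then to settle that comparison in dual coordinates using \cref{prop:dual_bldf_twice_diff}. By \cref{lem:bound_acc_convex}, with \(\mu=0\) and \(\tau_k=t_k\), we have \(\alpha_{k+1}=\eta_{k+1}^{-1}\) and
\[
\mathcal L_{k+1}(z_{k+1})\ge\phi(x_{k+1})+LD_d(y_k,x_{k+1})-\eta_{k+1}^{-1}D_d(z_k,z_{k+1}).
\]
It therefore suffices to prove
\[
\eta_{k+1}^{-1}D_d(z_k,z_{k+1})\le LD_d(y_k,x_{k+1}).
\]
Here \(L\) is the constant of the primal decrease in \cref{lem:bound_acc_generic}. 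I work with that constant as the lemma states it.

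\textbf{Step 1: write both steps as dual segments.} Set \(g_k=\nabla\phi(y_k)\). The primal step gives
\[
\nabla d(x_{k+1})=\nabla d(y_k)-L^{-1}g_k.
\]
For the primal map with \(\mu=0\), we have \(\nabla d(z_{k+1})=\nabla d(x_0)-\eta_{k+1}\lambda_{k+1}\). Insert \(\lambda_{k+1}=(1-t_k)\lambda_k+t_kg_k\) and use \(\eta_{k+1}(1-t_k)=\eta_k\), which is the recursion \(\eta_{k+1}^{-1}=(1-t_k)\eta_k^{-1}\). Together with \(\nabla d(z_k)=\nabla d(x_0)-\eta_k\lambda_k\), this gives
\[
\nabla d(z_{k+1})=\nabla d(z_k)-t_k\eta_{k+1}g_k.
\]
At \(k=0\) the same formula holds because \(\lambda_0=0\) and \(z_0=x_0\).

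Now set \(u_1=\nabla d(y_k)\), \(u_2=\nabla d(z_k)\), \(v=-L^{-1}g_k\) and \(s=t_k\eta_{k+1}L\). The identity \(D_d(x,y)=D_{d^*}(\nabla d(y),\nabla d(x))\) then gives
\[
D_d(y_k,x_{k+1})=D_{d^*}(u_1+v,u_1),\qquad D_d(z_k,z_{k+1})=D_{d^*}(u_2+sv,u_2).
\]

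\textbf{Step 2: apply the distortion bound.} The segments \(u_1+[0,1]v\) and \(u_2+[0,s]v\) are exactly the relevant dual segments of iteration \(k\), so they lie in \(\Omega_k\). By \cref{def:dual_bldf} and \cref{prop:dual_bldf_twice_diff},
\[
D_{d^*}(u_2+sv,u_2)\le c_{d^*}(s,\Omega_k;2)^2s^2D_{d^*}(u_1+v,u_1)\le\rho_k^2s^2D_{d^*}(u_1+v,u_1).
\]
The case \(g_k=0\) is trivial, since then both sides vanish.

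\textbf{Step 3: use the coupling.} Multiplying by \(\eta_{k+1}^{-1}\) gives
\[
\eta_{k+1}^{-1}D_d(z_k,z_{k+1})\le\rho_k^2\,t_k^2\eta_{k+1}L^2\,D_d(y_k,x_{k+1}).
\]
The coupling \(\eta_{k+1}^{-1}=M_kt_k^2\) means \(t_k^2\eta_{k+1}=M_k^{-1}\). So the right-hand side equals \((\rho_k^2L/M_k)\,LD_d(y_k,x_{k+1})\), which is at most \(LD_d(y_k,x_{k+1})\) whenever \(M_k\ge\rho_k^2L\). This proves the claim and hence the exit condition.

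\textbf{Main obstacle.} The delicate step is Step 1. The orientation of the Bregman divergences after passing to \(d^*\) must match \cref{def:dual_bldf}, with the anchor as the second argument. The dual-averaging recursion must also collapse exactly to a displacement along \(-g_k\) from \(\nabla d(z_k)\). Both points rely on the recursion \(\eta_{k+1}^{-1}=(1-t_k)\eta_k^{-1}\) and on \(\mu=0\). Membership of the segments in \(\Omega_k\) is assumed in the statement, so it needs no further argument.
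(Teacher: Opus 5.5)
Your proposal is correct and follows the paper's proof step for step. You reduce via the convex specialization lemma to comparing \(\eta_{k+1}^{-1}D_d(z_k,z_{k+1})\) with \(LD_d(y_k,x_{k+1})\), and write both as parallel dual segments along \(-\nabla\phi(y_k)\) with relative scale \(s=Lt_k\eta_{k+1}\). You then apply \(c_{d^*}(s,\Omega_k;2)\le\rho_{d^*}(\Omega_k)\le\rho_k\) and close with \(\eta_{k+1}^{-1}=M_kt_k^2\ge\rho_k^2Lt_k^2\). Your explicit derivation of \(\nabla d(z_{k+1})=\nabla d(z_k)-t_k\eta_{k+1}g_k\) from the dual-averaging recursion, including the case \(k=0\), fills in a step the paper states without computation.
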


\begin{proof}
By \cref{lem:bound_acc_convex},
\[
\mathcal L_{k+1}(z_{k+1})
\ge
\phi(x_{k+1})
+
LD_d(y_k,x_{k+1})
-
\eta_{k+1}^{-1}D_d(z_k,z_{k+1}).
\]
Since \(\mu=0\),
\[
\nabla d(x_{k+1})
=
\nabla d(y_k)-\frac1L\nabla\phi(y_k),
\qquad
\nabla d(z_{k+1})
=
\nabla d(z_k)-\frac{t_k}{\eta_{k+1}^{-1}}\nabla\phi(y_k).
\]
Thus the two dual displacements are parallel, with relative scaling
\[
    \frac{Lt_k}{\eta_{k+1}^{-1}}.
\]
Using \(\cref{prop:dual_bldf_twice_diff}\) and
\(\rho_{d^*}(\Omega_k)\le\rho_k\),
\[
D_d(z_k,z_{k+1})
\le
\rho_k^2
\left(\frac{Lt_k}{\eta_{k+1}^{-1}}\right)^2
D_d(y_k,x_{k+1}).
\]
Therefore
\[
\begin{aligned}
LD_d(y_k,x_{k+1})
-
\eta_{k+1}^{-1}D_d(z_k,z_{k+1})
\ge
\left(
L-\frac{\rho_k^2L^2t_k^2}{\eta_{k+1}^{-1}}
\right)
D_d(y_k,x_{k+1})
\ge
0,
\end{aligned}
\]
where the last inequality uses
\(\eta_{k+1}^{-1}=M_kt_k^2\ge\rho_k^2Lt_k^2\). Hence
\[
\mathcal L_{k+1}(z_{k+1})
\ge
\phi(x_{k+1}).
\]
\end{proof}

\begin{restatable}[Strongly convex specialization]{lemma}{BoundAccStronglyConvex}
\label{lem:bound_acc_strongly_convex}
Assume \(\mu>0\). Define
\[
q_{k+1}
:=
\nabla d^*\!\left(
\nabla d(y_k)-\frac1\mu\nabla\phi(y_k)
\right).
\]
Assume that \(\nabla d(y_k)-\frac1\mu\nabla\phi(y_k)\in\operatorname{int}(\dom d^*)\), so that
\(q_{k+1}\) is well defined, and that \(\tau_k<1\). Then, if
\[
t_k\le 1,
\qquad
\frac{t_k-\tau_k}{1-\tau_k}\le t_k,
\]
we have
\[
\begin{aligned}
\mathcal L_{k+1}(z_{k+1})
\ge\;&
\phi(x_{k+1})
+
\Bigg[
LD_d(y_k,x_{k+1})
-
\mu\frac{t_k-\tau_k}{1-\tau_k}
D_d(y_k,q_{k+1})
\Bigg]
\\
&+
\Bigg[
\mu\frac{t_k-\tau_k}{1-\tau_k}
D_d(z_k,q_{k+1})
-
\alpha_{k+1}D_d(z_k,z_{k+1})
\Bigg].
\end{aligned}
\]
\end{restatable}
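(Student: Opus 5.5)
The plan is to start from the generic bound of \cref{lem:bound_acc_generic} and rewrite only the cross term \((\tau_k-t_k)\langle\nabla\phi(y_k),x_k-z_k\rangle\) in Bregman form, using the auxiliary point \(q_{k+1}\). Set \(\beta_k:=\frac{t_k-\tau_k}{1-\tau_k}\), which is well defined because \(\tau_k<1\). The terms \(\phi(x_{k+1})+LD_d(y_k,x_{k+1})-\alpha_{k+1}D_d(z_k,z_{k+1})\) already appear in the claimed bound. So it suffices to show
\[
(1-t_k)\mu D_d(x_k,y_k)+t_k\mu D_d(z_k,y_k)+(\tau_k-t_k)\langle\nabla\phi(y_k),x_k-z_k\rangle
\ge \mu\beta_k\bigl(D_d(z_k,q_{k+1})-D_d(y_k,q_{k+1})\bigr).
\]

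\emph{Step 1: remove \(x_k\) from the cross term.} Since \(y_k=(1-\tau_k)x_k+\tau_kz_k\), we have \(x_k-z_k=(y_k-z_k)/(1-\tau_k)\). Hence
\[
(\tau_k-t_k)\langle\nabla\phi(y_k),x_k-z_k\rangle=-\beta_k\langle\nabla\phi(y_k),y_k-z_k\rangle .
\]

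\emph{Step 2: express the gradient through \(q_{k+1}\).} By definition of \(q_{k+1}\) (well defined by assumption), \(\nabla\phi(y_k)=\mu\bigl(\nabla d(y_k)-\nabla d(q_{k+1})\bigr)\). The three-point identity
\[
D_d(z_k,q_{k+1})=D_d(z_k,y_k)+D_d(y_k,q_{k+1})+\langle\nabla d(y_k)-\nabla d(q_{k+1}),z_k-y_k\rangle
\]
then gives
\[
-\beta_k\langle\nabla\phi(y_k),y_k-z_k\rangle=\mu\beta_k\bigl(D_d(z_k,q_{k+1})-D_d(z_k,y_k)-D_d(y_k,q_{k+1})\bigr).
\]

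\emph{Step 3: absorb or drop the leftover terms.} After substituting, the left-hand side of the target equals
\[
\mu\beta_k\bigl(D_d(z_k,q_{k+1})-D_d(y_k,q_{k+1})\bigr)+(t_k-\beta_k)\mu D_d(z_k,y_k)+(1-t_k)\mu D_d(x_k,y_k).
\]
The hypothesis \(\beta_k\le t_k\) makes the coefficient of \(D_d(z_k,y_k)\) nonnegative. The hypothesis \(t_k\le1\) makes the coefficient of \(D_d(x_k,y_k)\) nonnegative. Both terms can therefore be dropped, since \(D_d\ge0\), which yields the two bracketed groups of the statement. In the case \(t_k=1\), the \((1-t_k)\) terms are already absent in \cref{lem:bound_acc_generic}, and the argument is unchanged.

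The main obstacle is purely bookkeeping: matching the signs and the argument order in the three-point identity, so that the \(D_d(z_k,y_k)\) term produced in Step 2 cancels against \(t_k\mu D_d(z_k,y_k)\) with the correct sign. I would verify this sign first, by checking the Euclidean case \(d=\tfrac12\|\cdot\|^2\), where the identity reduces to the polarization identity.
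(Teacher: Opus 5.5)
Your proposal is correct and follows essentially the same route as the paper. Both start from \cref{lem:bound_acc_generic}, rewrite \(x_k-z_k=(y_k-z_k)/(1-\tau_k)\), substitute \(\nabla\phi(y_k)=\mu(\nabla d(y_k)-\nabla d(q_{k+1}))\), apply the three-point identity (your sign and argument order check out), and then drop the nonnegative terms \((1-t_k)\mu D_d(x_k,y_k)\) and \((t_k-\beta_k)\mu D_d(z_k,y_k)\) using the two hypotheses.
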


\begin{proof}
By definition of \(q_{k+1}\),
\[
\nabla\phi(y_k)
=
\mu\big(\nabla d(y_k)-\nabla d(q_{k+1})\big).
\]
Since
\[
y_k=(1-\tau_k)x_k+\tau_kz_k,
\]
we have
\[
x_k-z_k=\frac{y_k-z_k}{1-\tau_k}.
\]
Therefore
\[
\begin{aligned}
(\tau_k-t_k)
\langle\nabla\phi(y_k),x_k-z_k\rangle
&=
\frac{\mu(\tau_k-t_k)}{1-\tau_k}
\left\langle
\nabla d(y_k)-\nabla d(q_{k+1}),
y_k-z_k
\right\rangle .
\end{aligned}
\]
By the three-point identity,
\[
\left\langle
\nabla d(y_k)-\nabla d(q_{k+1}),
y_k-z_k
\right\rangle
=
D_d(y_k,q_{k+1})
+
D_d(z_k,y_k)
-
D_d(z_k,q_{k+1}).
\]
Hence
\[
\begin{aligned}
(\tau_k-t_k)
\langle\nabla\phi(y_k),x_k-z_k\rangle
=
&-\mu\frac{t_k-\tau_k}{1-\tau_k}D_d(y_k,q_{k+1})
\\
&-\mu\frac{t_k-\tau_k}{1-\tau_k}D_d(z_k,y_k)
\\
&+\mu\frac{t_k-\tau_k}{1-\tau_k}D_d(z_k,q_{k+1}).
\end{aligned}
\]
Substituting this identity into \cref{lem:bound_acc_generic} and writing \(\alpha_{k+1}=\mu+\eta_{k+1}^{-1}\) gives
\[
\begin{aligned}
\mathcal L_{k+1}(z_{k+1})
\ge\;&
\phi(x_{k+1})
+
LD_d(y_k,x_{k+1})
-
\alpha_{k+1}D_d(z_k,z_{k+1})
\\
&+
(1-t_k)\mu D_d(x_k,y_k)
+
t_k\mu D_d(z_k,y_k)
\\
&-
\mu\frac{t_k-\tau_k}{1-\tau_k}D_d(y_k,q_{k+1})
-\mu\frac{t_k-\tau_k}{1-\tau_k}D_d(z_k,y_k)
\\
&+
\mu\frac{t_k-\tau_k}{1-\tau_k}D_d(z_k,q_{k+1}).
\end{aligned}
\]
Rearranging,
\[
\begin{aligned}
\mathcal L_{k+1}(z_{k+1})
\ge\;&
\phi(x_{k+1})
+
\Bigg[
LD_d(y_k,x_{k+1})
-
\mu\frac{t_k-\tau_k}{1-\tau_k}
D_d(y_k,q_{k+1})
\Bigg]
\\
&+
\Bigg[
\mu\frac{t_k-\tau_k}{1-\tau_k}
D_d(z_k,q_{k+1})
-
\alpha_{k+1}D_d(z_k,z_{k+1})
\Bigg]
\\
&+
(1-t_k)\mu D_d(x_k,y_k)
+
\mu\left(
t_k-\frac{t_k-\tau_k}{1-\tau_k}
\right)D_d(z_k,y_k).
\end{aligned}
\]
The last two terms are nonnegative under
\[
t_k\le 1,
\qquad
\frac{t_k-\tau_k}{1-\tau_k}\le t_k.
\]
Dropping them gives the result.
\end{proof}

\begin{restatable}[Strongly convex one-step bound under dual norm distortion]{proposition}{StronglyConvexOneStepDualDistortion}
\label{prop:strongly_convex_rate_dual_distortion}
Under \cref{ass:setting}, assume \(\mu>0\) and that \(q_{k+1}\) of
\cref{lem:bound_acc_strongly_convex} is well defined. Let \(\rho_k\ge1\) be such that the relevant dual segments of
iteration \(k\), including the dual segments joining \(\nabla d(y_k)\) to \(\nabla d(q_{k+1})\) and
joining \(\nabla d(z_k)\) to \(\nabla d(q_{k+1})\), are contained in a convex set \(\Omega_k\) satisfying
\[
    \rho_{d^*}(\Omega_k)\le \rho_k .
\]
Let \(M_k\ge\rho_k^4L\), and let \(t_k\in(0,1]\) be chosen by
\[
    \mu+\eta_{k+1}^{-1}
    =
    M_kt_k^2,
    \qquad
    \eta_{k+1}^{-1}
    =
    (1-t_k)\eta_k^{-1}.
\]
Equivalently, with \(\alpha_k:=\mu+\eta_k^{-1}\), \(t_k\) is the positive root of
\[
    M_kt_k^2+(\alpha_k-\mu)t_k-\alpha_k=0.
\]
Set
\[
    \tau_k
    =
    \frac{t_k-\mu/\sqrt{M_kL}}
    {1-\mu/\sqrt{M_kL}},
    \qquad
    y_k=(1-\tau_k)x_k+\tau_kz_k .
\]
Throughout, \(L\) is the constant the gradient step of \cref{algo:accstep} actually uses, which is
the same constant that forms \(\tau_k\).
Then the accelerated step satisfies
\[
    \mathcal L_{k+1}(z_{k+1})
    \ge
    \phi(x_{k+1}).
\]
Hence the exit condition holds at every acceleration parameter
\(M_k\ge\rho_k^4L\), with \(\mathcal L_{k+1}(z_{k+1})\) as in \cref{lem:bound_acc_generic}.
\end{restatable}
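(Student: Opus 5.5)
The plan is to start from \cref{lem:bound_acc_strongly_convex} and show that each of its two bracketed terms is nonnegative. Each bracket is a comparison of two dual Bregman lengths with a common anchor and direction, which \cref{prop:dual_bldf_twice_diff} controls. All Bregman divergences are rewritten through $D_d(a,b)=D_{d^*}(\nabla d(b),\nabla d(a))$, so every comparison lives on a segment inside $\Omega_k$.

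\emph{Step 1: parameters.} Write $c:=\mu/\sqrt{M_kL}$ and $\beta:=(t_k-\tau_k)/(1-\tau_k)$. From the definition of $\tau_k$ we have $1-\tau_k=(1-t_k)/(1-c)$ and $t_k-\tau_k=c(1-t_k)/(1-c)$. Hence, when $t_k<1$, we get $\tau_k<1$ and $\beta=c$. The hypothesis $\beta\le t_k$ of \cref{lem:bound_acc_strongly_convex} then follows from $\alpha_{k+1}=M_kt_k^2\ge\mu$, which gives $t_k\ge\sqrt{\mu/M_k}\ge\mu/\sqrt{M_kL}$ because $\mu\le L$.

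\emph{Step 2: the two displacement formulas.} The primal step gives $\nabla d(x_{k+1})=\nabla d(y_k)-L^{-1}\nabla\phi(y_k)$. The definition of $q_{k+1}$ gives $\nabla d(q_{k+1})=\nabla d(y_k)-\mu^{-1}\nabla\phi(y_k)$. So, from the anchor $\nabla d(y_k)$, the segment to $\nabla d(q_{k+1})$ is the segment to $\nabla d(x_{k+1})$ rescaled by $s=L/\mu$.

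For $z_{k+1}$, the optimality condition displayed in the proof of \cref{lem:pd_lower_step} reads $\alpha_{k+1}\nabla d(z_{k+1})=(1-t_k)\alpha_k\nabla d(z_k)+t_k\mu\nabla d(y_k)-t_k\nabla\phi(y_k)$. Rewriting $\mu\nabla d(y_k)-\nabla\phi(y_k)=\mu\nabla d(q_{k+1})$ and using $(1-t_k)\alpha_k+t_k\mu=\alpha_{k+1}$ gives
\[
\nabla d(z_{k+1})-\nabla d(z_k)=\sigma\bigl(\nabla d(q_{k+1})-\nabla d(z_k)\bigr),\qquad \sigma:=t_k\mu/\alpha_{k+1}\in(0,1].
\]

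\emph{Step 3: the two BLDF comparisons.} For the first bracket, apply \cref{prop:dual_bldf_twice_diff} with $u_1=u_2=\nabla d(y_k)$, $v=-L^{-1}\nabla\phi(y_k)$ and $s=L/\mu$. This yields $D_d(y_k,q_{k+1})\le\rho_k^2(L/\mu)^2D_d(y_k,x_{k+1})$, so
\[
\beta\mu D_d(y_k,q_{k+1})\le \rho_k^2L^{3/2}M_k^{-1/2}\,D_d(y_k,x_{k+1})\le L\,D_d(y_k,x_{k+1}),
\]
where the last inequality uses $M_k\ge\rho_k^4L$.

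For the second bracket, take $u_1=u_2=\nabla d(z_k)$, $v=\nabla d(q_{k+1})-\nabla d(z_k)$ and $s=\sigma$. This gives $D_d(z_k,z_{k+1})\le\rho_k^2\sigma^2D_d(z_k,q_{k+1})$. Since $\alpha_{k+1}\sigma^2=\mu^2t_k^2/\alpha_{k+1}=\mu^2/M_k$, we get $\alpha_{k+1}D_d(z_k,z_{k+1})\le\rho_k^2\mu^2M_k^{-1}D_d(z_k,q_{k+1})$. This is at most $\beta\mu D_d(z_k,q_{k+1})=\mu^2(M_kL)^{-1/2}D_d(z_k,q_{k+1})$ exactly when $M_k\ge\rho_k^4L$. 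Both brackets are therefore nonnegative, and \cref{lem:bound_acc_strongly_convex} gives $\mathcal L_{k+1}(z_{k+1})\ge\phi(x_{k+1})$.

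\emph{Step 4: the edge case $t_k=1$.} Here $\tau_k=1$, so $y_k=z_k$ and \cref{lem:bound_acc_strongly_convex} does not apply. I would use \cref{lem:bound_acc_generic} directly instead: the inner-product term and the $(1-t_k)$ term vanish. The same Step 2 computation, now with anchor $\nabla d(y_k)=\nabla d(z_k)$, shows that both $z_{k+1}$ and $x_{k+1}$ lie on the ray toward $\nabla d(q_{k+1})$, at dual scales $\mu/\alpha_{k+1}$ and $\mu/L$ respectively. One BLDF comparison, with $s=L/\alpha_{k+1}$ and $\alpha_{k+1}=M_k$, then gives $\alpha_{k+1}D_d(z_k,z_{k+1})\le\rho_k^2L^2M_k^{-1}D_d(y_k,x_{k+1})\le L\,D_d(y_k,x_{k+1})$, since $M_k\ge\rho_k^4L\ge\rho_k^2L$.

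The main obstacle is Step 2 for $z_{k+1}$: recognising that the dual-averaging displacement is colinear with $\nabla d(q_{k+1})-\nabla d(z_k)$. Once that is in place, the choice of $\tau_k$, which makes $\beta=c$, is exactly what balances the two comparisons at the threshold $M_k=\rho_k^4L$. The containment of all segments in $\Omega_k$ is assumed in the statement.
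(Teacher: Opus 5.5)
Your proof is correct and follows the paper's argument essentially step for step. Both reduce to \cref{lem:bound_acc_strongly_convex} with $\delta_k=\mu/\sqrt{M_kL}$, make two same-anchor BLDF comparisons (scale $L/\mu$ at $\nabla d(y_k)$ and scale $t_k\mu/\alpha_{k+1}$ at $\nabla d(z_k)$), check the side condition $t_k\ge\sqrt{\mu/M_k}\ge\delta_k$, and handle $t_k=1$ by a single comparison at scale $L/\alpha_{k+1}$. One point you should state explicitly is why your Step~4 collinearity for $z_{k+1}$ holds: with $\eta_{k+1}^{-1}=(1-t_k)\eta_k^{-1}$ and $M_k\ge\rho_k^4L>\mu$, the case $t_k=1$ can only be the limiting initialization $k=0$, where $z_0=x_0$, which is how the paper treats it (for a general $t_k=1$ one gets $\nabla d(z_{k+1})=\nabla d(x_0)-\alpha_{k+1}^{-1}\nabla f(y_k)$, anchored at $\nabla d(x_0)$ rather than $\nabla d(z_k)$).
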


\begin{proof}
Assume first \(t_k<1\); the case \(t_k=1\), which the initialization imposes at \(k=0\), is treated
at the end.

Set
\[
    \alpha_{k+1}:=\mu+\eta_{k+1}^{-1},
    \qquad
    \delta_k:=\frac{t_k-\tau_k}{1-\tau_k}.
\]
By construction,
\[
    \delta_k=\frac{\mu}{\sqrt{M_kL}}.
\]
Moreover, \(\alpha_{k+1}=M_kt_k^2\).

By \cref{lem:bound_acc_strongly_convex},
\[
\begin{aligned}
\mathcal L_{k+1}(z_{k+1})
\ge\;&
\phi(x_{k+1})
+
\Big[
LD_d(y_k,x_{k+1})
-
\mu\delta_kD_d(y_k,q_{k+1})
\Big]
\\
&+
\Big[
\mu\delta_kD_d(z_k,q_{k+1})
-
\alpha_{k+1}D_d(z_k,z_{k+1})
\Big],
\end{aligned}
\]
provided \(t_k\le1\) and \(\delta_k\le t_k\).

We first show that the two brackets are nonnegative. The two uses of the dual
BLDF are separate and each compares displacements along the same dual ray.
For the first bracket, set
\[
    u_y:=\nabla d(y_k),
    \qquad
    v_y:=\nabla d(x_{k+1})-\nabla d(y_k).
\]
The primal gradient step and the definition of \(q_{k+1}\) give
\[
    \nabla d(x_{k+1})=u_y+v_y,
    \qquad
    \nabla d(q_{k+1})=u_y+\frac{L}{\mu}v_y .
\]
Hence
\[
D_d(y_k,x_{k+1})=D_{d^*}(u_y+v_y,u_y),
\qquad
D_d(y_k,q_{k+1})=D_{d^*}\left(u_y+\frac{L}{\mu}v_y,u_y\right).
\]
Applying the dual BLDF on this same ray, with scaling \(L/\mu\), gives
\[
D_d(y_k,x_{k+1})
\ge
\frac1{\rho_k^2}
\left(\frac{\mu}{L}\right)^2
D_d(y_k,q_{k+1}),
\]
hence
\[
LD_d(y_k,x_{k+1})
-
\mu\delta_kD_d(y_k,q_{k+1})
\ge
\left(
\frac{\mu^2}{\rho_k^2L}
-
\mu\delta_k
\right)
D_d(y_k,q_{k+1})
\ge
0,
\]
since \(\sqrt{M_kL}\ge\rho_k^2L\).
For the second bracket, set
\[
    u_z:=\nabla d(z_k),
    \qquad
    v_z:=\nabla d(q_{k+1})-\nabla d(z_k).
\]
Using the optimality conditions of the dual map and
\(\alpha_{k+1}=\mu+\eta_{k+1}^{-1}\), we get
\[
    \nabla d(z_{k+1})
    =
    u_z+\frac{t_k\mu}{\alpha_{k+1}}v_z .
\]
Thus
\[
D_d(z_k,z_{k+1})
= D_{d^*}\left(u_z+\frac{t_k\mu}{\alpha_{k+1}}v_z,u_z\right),
\qquad
D_d(z_k,q_{k+1})=D_{d^*}(u_z+v_z,u_z),
\]
and the dual BLDF on this ray gives
\[
D_d(z_k,z_{k+1})
\le
\rho_k^2
\left(\frac{t_k\mu}{\alpha_{k+1}}\right)^2
D_d(z_k,q_{k+1}),
\]
so
\[
\begin{aligned}
\mu\delta_kD_d(z_k,q_{k+1})
-
\alpha_{k+1}D_d(z_k,z_{k+1})
\ge
\left(
\mu\delta_k
-
\frac{\rho_k^2t_k^2\mu^2}{\alpha_{k+1}}
\right)
D_d(z_k,q_{k+1})
\ge
0,
\end{aligned}
\]
where the last inequality uses \(\delta_k=\mu/\sqrt{M_kL}\) and \(\alpha_{k+1}=M_kt_k^2\), so that
\[
    \mu\delta_k-\frac{\rho_k^2t_k^2\mu^2}{\alpha_{k+1}}
    =
    \mu^2\left(\frac{1}{\sqrt{M_kL}}-\frac{\rho_k^2}{M_k}\right)
    \ge 0
    \quad\text{since } M_k\ge\rho_k^4L .
\]
Thus both brackets are nonnegative, and therefore
\[
\mathcal L_{k+1}(z_{k+1})
\ge
\phi(x_{k+1}).
\]

It remains to check the side conditions of
\(\cref{lem:bound_acc_strongly_convex}\). Since
\[
    \alpha_{k+1}
    =
    M_kt_k^2
    =
    \mu+\eta_{k+1}^{-1}
    \ge \mu,
\]
we have
\[
    t_k
    \ge
    \sqrt{\frac{\mu}{M_k}}
    \ge
    \sqrt{\frac{\mu}{M_k}}\sqrt{\frac{\mu}{L}}
    =
    \frac{\mu}{\sqrt{M_kL}}
    =
    \delta_k,
\]
where we used \(\mu\le L\). Also \(t_k\le1\): the polynomial
\[
    p(t)=M_kt^2+(\alpha_k-\mu)t-\alpha_k
\]
satisfies \(p(0)=-\alpha_k<0\) and
\[
    p(1)=M_k-\mu\ge0,
\]
so its positive root lies in \((0,1]\). Hence the assumptions of
\(\cref{lem:bound_acc_strongly_convex}\) hold.

\paragraph{The case \(t_k=1\).}
This case is outside \cref{lem:bound_acc_strongly_convex}, which requires \(\tau_k<1\): the
definition of \(\tau_k\) gives \(\tau_k=1\) when \(t_k=1\), and \(\delta_k=(t_k-\tau_k)/(1-\tau_k)\)
is then \(0/0\). It arises only at \(k=0\), where \cref{algo:bregman_adaptive} imposes \(t_0=1\) from
outside the root formula above, and it does not need the virtual point \(q_{k+1}\) at all. The
initialization sets \(x_0^{\mathrm{best}}=z_0=x_0\), so \(y_0=x_0\) for every \(\tau_0\), and
\(\nabla f(y_0)=\nabla\phi(x_0)\). The two dual displacements therefore leave the single anchor
\(\nabla d(x_0)\) along \(-\nabla\phi(x_0)\):
\[
    \nabla d(x_1)=\nabla d(x_0)-\tfrac1L\nabla\phi(x_0),
    \qquad
    \nabla d(z_1)=\nabla d(x_0)-\tfrac1{\alpha_1}\nabla\phi(x_0),
\]
which is the configuration of \cref{prop:convex_rate_dual_distortion}. At \(y_0=z_0=x_0\), the
\(t_k=1\) case of \cref{lem:pd_lower_step} gives
\(\mathcal D^{\eta_1}(\lambda_1)\ge\min_x\{\phi(x_0)+\langle\nabla\phi(x_0),x-x_0\rangle
+\alpha_1D_d(x,x_0)\}\). The minimizer is \(z_1\), and
\(\nabla\phi(x_0)=\alpha_1(\nabla d(x_0)-\nabla d(z_1))\) gives
\(\langle\nabla\phi(x_0),z_1-x_0\rangle=-\alpha_1\big(D_d(z_1,x_0)+D_d(x_0,z_1)\big)\). Hence
\[
    \mathcal D^{\eta_1}(\lambda_1)
    \ge
    \phi(x_0)-\alpha_1D_d(x_0,z_1)
    =
    \phi(x_0)-\alpha_1D_d(z_0,z_1)
    =
    \mathcal L_1(z_1),
\]
since \(\phi_0^{\mathrm{low}}=\phi(x_0)\) at \(y_0=z_0=x_0\).
The same computation with \(L\) in place of \(\alpha_1\) and \(x_1\) in place of \(z_1\), together
with the relative smoothness of \(\phi\), gives \(\phi(x_1)\le\phi(x_0)-LD_d(x_0,x_1)\). One comparison of the two displacements at the scale \(s=L/\alpha_1\), through
\cref{def:dual_bldf} with \(\rho_{d^*}(\Omega_0)\le\rho_0\), gives
\(\alpha_1D_d(z_0,z_1)\le\rho_0^2L^2\alpha_1^{-1}D_d(y_0,x_1)\). The exit condition
\(\mathcal L_1(z_1)\ge\phi(x_1)\) follows whenever \(\alpha_1\ge\rho_0^2L\), and the
choice in the statement gives \(\alpha_1=M_0t_0^2=M_0\ge\rho_0^4L\ge\rho_0^2L\) since
\(\rho_0\ge1\). The first iteration therefore passes the exit condition at every acceleration
parameter \(M_0\ge\rho_0^4L\).
\end{proof}

\EventualBLDFRate*

\begin{proof}
We prove the convex case; the strongly convex case is identical, replacing
\(\rho^2L\) by \(\rho^4L\).

For every \(k\ge K\), \cref{prop:convex_rate_dual_distortion} with \(\rho_k\le\rho\) and
\(L_{\mathrm{cur}}\le2L\) implies that any trial value \(M\ge 2\rho^2L\) is sufficient for
acceptance. Since each trial doubles \(M_k\), the inner loop exits at every \(k\ge K\); with the
hypothesis for \(k<K\), it exits at every iteration, so \cref{thm:generic_accelerated_rate_Mk}
applies. By the update rule in
\cref{algo:bregman_adaptive}, the search starts from a reduced value of the
previously accepted \(M_{k-1}\) and then doubles until acceptance. Hence, if the
search starts below \(2\rho^2L\), the accepted value is at most \(4\rho^2L\). If
it starts above \(2\rho^2L\), the first trial is accepted, and the next iteration
again starts from a fixed reduction of that accepted value. Thus any overestimate
inherited from earlier iterations decreases geometrically until it falls below
\(4\rho^2L\). Therefore there exists \(\bar K\ge K\) such that
\[
    M_k\le 4\rho^2L
    \qquad \forall k\ge \bar K .
\]

Applying \cref{thm:generic_accelerated_rate_Mk} from iteration \(\bar K\) with
the uniform bound \(M_k\le 4\rho^2L\) gives
\[
\eta_k^{-1}
\le
\frac{\eta_{\bar K}^{-1}}{
\left(
1+
\frac{k-\bar K}{2\sqrt{4\rho^2L\eta_{\bar K}}}
\right)^2}
=
\frac{\eta_{\bar K}^{-1}}{
\left(
1+
\frac{k-\bar K}{4\rho\sqrt{L\eta_{\bar K}}}
\right)^2}.
\]
The strongly convex case follows similarly from
\(M_k\le 4\rho^4L\), yielding
\[
\eta_k^{-1}
\le
\eta_{\bar K}^{-1}
\left(
1-\sqrt{\frac{\mu}{4\rho^4L}}
\right)^{k-\bar K}.
\]
\end{proof}

\IterationComplexity*

\begin{proof}
Write \(D:=D_d(x_\star,x_0)\) and let \(k\ge\bar K\).

\emph{Case \(\mu=0\).} By \cref{thm:eventual_bldf_rate} it suffices that
\[
\frac{\eta_{\bar K}^{-1}D}{\left(1+s\right)^2}\le\varepsilon,
\qquad
s:=\frac{k-\bar K}{4\rho\sqrt{L\eta_{\bar K}}},
\]
that is \(1+s\ge\sqrt{\eta_{\bar K}^{-1}D/\varepsilon}\). Dropping the \(1\) on the left and
substituting \(s\) gives the sufficient condition
\[
k-\bar K
\;\ge\;
4\rho\sqrt{L\eta_{\bar K}}\,\sqrt{\frac{\eta_{\bar K}^{-1}D}{\varepsilon}}
\;=\;
4\rho\sqrt{\frac{LD}{\varepsilon}},
\]
in which \(\eta_{\bar K}\) cancels.

\emph{Case \(\mu>0\).} Set \(q:=\sqrt{\mu/(4\rho^4L)}\in(0,1)\). By
\cref{thm:eventual_bldf_rate} it suffices that
\(\eta_{\bar K}^{-1}D(1-q)^{k-\bar K}\le\varepsilon\), i.e.
\[
(k-\bar K)\log\frac{1}{1-q}\;\ge\;\log\frac{\eta_{\bar K}^{-1}D}{\varepsilon}.
\]
Since \(\log\frac{1}{1-q}\ge q\), the condition
\(k-\bar K\ge q^{-1}\log\!\left(\eta_{\bar K}^{-1}D/\varepsilon\right)\) is sufficient, and
\(q^{-1}=2\rho^2\sqrt{L/\mu}\).
\end{proof}

\subsection{Oracle complexity} \label{sec:oracle_complexity}
Backtracking uses a variable number of oracle calls per accepted iteration. Let \(L\) denote the
relative smoothness constant, let \(L_0\) and \(M_{-1}\) be the initial estimates, and let
\[
    M_{\max}:=\max_{0\le i<K} M_i
\]
be the largest accepted value in the first \(K\) outer iterations. Each iteration reduces the
current estimates before the searches double them. This gives constant amortized overhead, with
logarithmic terms for underestimated initial values. Writing \((a)_+:=\max\{a,0\}\), the total
number of trials satisfies
\[
    N_{\mathrm{trials}}
    \le
    4K
    +
    \left\lceil
    \log_2\!\left(\frac{M_{\max}}{M_{-1}}\right)_+
    \right\rceil
    +
    \left\lceil
    \log_2\!\left(\frac{L}{L_0}\right)_+
    \right\rceil .
\]
This is the standard backtracking tradeoff \citep{nesterov2006cubic}: the additional trials allow
the method to use local estimates instead of a fixed worst-case constant.

\clearpage
\section{Euclidean specialization and comparison with Nesterov}\label{app:euclidean_nesterov}

We show that, in the Euclidean geometry, ABrA-GD reduces to the standard
estimate-sequence form of Nesterov acceleration. Let
\[
    d(x)=\frac12\|x\|^2,
    \qquad
    D_d(x,y)=\frac12\|x-y\|^2,
    \qquad
    \nabla d(x)=x.
\]
Assume for this comparison that the line searches accept the sharp constants,
namely the smoothness estimate is \(L\) and the accepted acceleration parameter
is \(M_k=L\). Then the BLDF is equal to one, so no geometric distortion is
introduced by the Bregman geometry.

One difference from the ordinary trajectory remains. \Cref{algo:bregman_adaptive} passes
\(x_k^{\mathrm{best}}\), not \(x_k\), into the next step, so the identities below are the classical
estimate-sequence identities with \(x_k\) read as \(x_k^{\mathrm{best}}\). The correspondence is
therefore between the updates. The two trajectories coincide whenever the gradient step never
increases the objective, and ABrA-GD is otherwise a monotone variant of the classical method.

\paragraph{Convex case.}
When \(\mu=0\), one has \(\tau_k=t_k\) and
\(\eta_{k+1}^{-1}=Lt_k^2\). Writing
\(g_k=\nabla\phi(y_k)\), the ABrA-GD updates become
\[
    y_k=(1-t_k)x_k+t_k z_k,
    \qquad
    x_{k+1}=y_k-\frac1L g_k,
    \qquad
    z_{k+1}=z_k-\frac{1}{Lt_k}g_k .
\]
Therefore
\[
    x_{k+1}
    =
    (1-t_k)x_k+t_k z_{k+1},
\]
which is exactly the usual accelerated estimate-sequence coupling. Moreover,
from
\[
    \eta_{k+1}^{-1}=Lt_k^2,
    \qquad
    \eta_{k+1}^{-1}=(1-t_k)\eta_k^{-1},
\]
we recover the standard Nesterov/FISTA scalar recursion
\[
    t_k^2=(1-t_k)t_{k-1}^2.
\]
\Cref{thm:generic_accelerated_rate_Mk} gives
\[
    \phi(x_k^{\rm best})-\phi_\star
    \le
    \eta_k^{-1}D_d(x_\star,x_0)
    \le
    \frac{4L}{k^2}D_d(x_\star,x_0)
    =
    \frac{2L}{k^2}\|x_\star-x_0\|^2,
\]
which matches the classical accelerated \(O(L\|x_\star-x_0\|^2/k^2)\) rate up
to the standard indexing convention.

\paragraph{Relatively strongly convex case.}
Let \(\mu>0\) and set
\[
    \alpha_k:=\mu+\eta_k^{-1}.
\]
With \(M_k=L\), the parameters satisfy
\[
    \alpha_{k+1}=Lt_k^2,
    \qquad
    \alpha_{k+1}=(1-t_k)\alpha_k+t_k\mu,
\]
and
\[
    \tau_k
    =
    \frac{t_k-\mu/L}{1-\,\mu/L}.
\]
Define the Euclidean mirror point
\[
    q_{k+1}:=y_k-\frac1\mu \nabla\phi(y_k).
\]
The dual/primal-averaging update can be written as
\[
    z_{k+1}
    =
    \left(1-\frac{t_k\mu}{\alpha_{k+1}}\right)z_k
    +
    \frac{t_k\mu}{\alpha_{k+1}}q_{k+1}.
\]
The specific choice of \(\tau_k\) gives again
\[
    x_{k+1}
    =
    (1-t_k)x_k+t_k z_{k+1}.
\]
Thus the Euclidean strongly convex specialization is the standard Nesterov
estimate-sequence update written in the primal--dual variables. The generic
rate becomes
\[
    \phi(x_k^{\rm best})-\phi_\star
    \le
    \eta_k^{-1}D_d(x_\star,x_0),
    \qquad
    \eta_k^{-1}
    \le
    \eta_K^{-1}\left(1-\sqrt{\frac{\mu}{L}}\right)^{k-K},
\]
once the accepted parameter is \(M_k=L\). This recovers the classical
accelerated linear contraction \(1-\sqrt{\mu/L}\), up to constants and indexing.
If the line search accepts values \(M_k>L\), the updates no longer coincide with Nesterov's, and the
rate is the one \cref{thm:generic_accelerated_rate_Mk} states in terms of the accepted \(M_k\).

\clearpage
\section{Comparison with ABPG and triangle scaling}\label{app:comparison}

\subsection{Comparison with ABPG}

ABPG controls acceleration through a triangle-scaling exponent or gain. The adaptive variants of
\citet{hanzely2021accelerated} search over the exponent or gain, while
\citet{savchuk2024accelerated} fix the exponent and backtrack the relative smoothness constant.
ABrA-GD backtracks both a smoothness parameter and an acceleration parameter. Its search over
\(M_k\) tests the primal--dual bound directly and responds to both smoothness and local
geometric distortion.

Both approaches give bounds in terms of quantities accepted during the run
(\cref{tab:method_comparison}). For ABrA-GD, the same framework also gives a conditional
accelerated linear rate when \(\mu>0\). The cited TSP-based analyses do not establish such a
rate. \citet{hanzely2021accelerated} report faster convergence with restarted ABPG on a
D-optimal design instance described as relatively strongly convex, but give no accelerated linear
guarantee for that experiment.

For gain-adaptive ABPG at \(\gamma=2\), the rate is \(O(\bar G_k/k^2)\), where
\(\bar G_k\) depends on the accepted gains. A uniform bound on the gain can be obtained from
a Hessian ratio over the full region \(\mathcal X\)
\citep[\S2.2, eq.~(16)]{hanzely2021accelerated}. ABrA-GD uses the corresponding local quantity
\(\rho_{d^*}(\Omega_k)\) to bound its dual divergence comparisons. For twice-differentiable
geometries, \cref{prop:dual_bldf_twice_diff} relates these quantities, and
\cref{tab:rho_examples} gives formulas for standard kernels.

\subsection{Relation to the triangle-scaling property}\label{app:tsp}

A uniform triangle-scaling gain at exponent \(2\) bounds the dual norm distortion. Let
\(\mathcal X\) be convex with \(d\) twice differentiable and \(\nabla^2d\succ0\),
let \(\Omega=\nabla d(\mathcal X)\), and suppose the triangle-scaling property holds on
\(\mathcal X\) with exponent \(\gamma=2\) and a gain \(G\) \emph{uniform over triples}. Writing both
divergences in integral form along the common direction, letting \(\theta\to0\) and shrinking the
pair to a point \(q\) gives
\[
G
\;\ge\;
\sup_{x,q\in\mathcal X}
\lambda_{\max}\!\left(\nabla^2d(q)^{-1}\nabla^2d(x)\right)
\;=\;
\rho_{d^*}(\Omega)^2 ,
\]
using \(\nabla^2d^*(\nabla d(x))=\nabla^2d(x)^{-1}\). Thus \(\rho_{d^*}^2\le G\) on the
corresponding dual region. This Hessian ratio appears in the triangle-scaling bound of
\citet[\S2.2, eq.~(16)]{hanzely2021accelerated} and in the Hessian stability condition of
\citet{karimireddy2018global}. Here, it directly controls the local divergence comparisons,
including those needed when \(\mu>0\).

The implication requires a gain uniform over triples. It does not concern the intrinsic
triangle-scaling exponent defined by a pointwise limit and used by adaptive variants in practice.

For a jointly convex Bregman divergence, \(\gamma=1\) is a uniform triangle-scaling exponent at
gain \(1\), giving an \(O(1/k)\) rate. The entropy kernel has this property. For
\(d=\tfrac1p\|x\|^p\) with \(p>2\), the global exponent can also be below \(1\). The kernel-specific
constructions of \citet{hanzely2021accelerated} modify the reference function and restrict the
domain. The formulas in \cref{tab:rho_examples} express the corresponding local distortion through
coordinate ratios: approaching the origin can make the distortion unbounded, while bounded ratios
give finite distortion.

\paragraph{The Burg kernel.}
The Burg kernel \(d(x)=-\sum_i\log x_i\) is not jointly convex, and
\citet{hanzely2021accelerated} show that it has no uniform exponent \(\gamma>1/2\) at gain \(1\).
The global guarantee of ABPG for this kernel is therefore at best \(O(k^{-1/2})\), which is slower
than the \(O(1/k)\) rate of Bregman gradient descent. By \cref{tab:rho_examples}, the dual distortion
factor of the Burg kernel is
\[
\rho_{d^*}(\Omega)=\sup_{x_1,x_2\in\mathcal X}\max_i\frac{(x_2)_i}{(x_1)_i},
\qquad \Omega=\nabla d(\mathcal X),
\]
which is finite whenever the coordinate ratios are bounded on \(\mathcal X\). For example, let
\(\mathcal X_{a,b}=\{x:\ a\le x_i\le b\ \text{for all } i\}\) with \(0<a<b\). Its image
\(\nabla d(\mathcal X_{a,b})\) is a box, hence convex, and \(\rho_{d^*}(\nabla d(\mathcal X_{a,b}))=b/a\).
If, from some iteration on, the points that define the dual segments of \cref{def:omega_k} lie in
\(\mathcal X_{a,b}\), then \(\rho_k\le b/a\). \Cref{thm:eventual_bldf_rate} then gives an
\(O(1/k^2)\) rate, and \cref{cor:iteration_complexity} gives
\(O\big((b/a)\sqrt{L\,D_d(x_\star,x_0)/\varepsilon}\big)\) iterations when \(\mu=0\) and an
accelerated linear rate when \(\mu>0\). For the Burg kernel, the distortion factor thus gives an
accelerated rate where the uniform triangle-scaling exponent gives a rate slower than \(O(1/k)\).

The comparison with a uniform gain at exponent \(2\) is closer. On \(\mathcal X_{a,b}\), the
display above gives \(G\ge(b/a)^2\), so the \(O(G/k^2)\) bound of a uniform gain is not smaller than
the \(O(\rho^2/k^2)\) bound of \cref{thm:eventual_bldf_rate}, up to absolute constants. The
distortion factor \(\rho_k\) is measured on the region of iteration \(k\) only, so it can be smaller
than \(b/a\) when the iterates concentrate. The guarantee of \cref{thm:eventual_bldf_rate} is
conditional: it requires the bound on \(\rho_k\) to hold along the iterates, and the paper does not
prove this bound a priori.
\section{Composite objectives and the subgradient envelope}

\label{app:composite_subgradient}

The implicit envelope viewpoint does not require differentiability of the whole
objective. Consider the composite problem
\[
    \Phi_\star
    =
    \min_{x\in\mathcal X}
    \Phi(x)
    :=
    f(x)+r(x)+\mu D_d(x,x_0),
\]
where \(f\) is differentiable and relatively smooth, while \(r\) is a proper
closed convex term for which a Bregman proximal step is available. For
\(\eta>0\), define
\[
    \Phi^\eta_\star
    :=
    \min_x
    \left\{
    f(x)+r(x)+\left(\mu+\eta^{-1}\right)D_d(x,x_0)
    \right\}.
\]
If an algorithm produces iterates satisfying the requirement
\[
    \Phi(x_k)\le \Phi^\eta_\star,
\]
then the same envelope argument gives
\[
    \Phi(x_k)-\Phi_\star
    \le
    \eta^{-1}D_d(x_\star,x_0),
    \qquad x_\star\in\argmin_x \Phi(x),
\]
because
\[
    \Phi^\eta_\star
    \le
    \Phi^\eta(x_\star)
    =
    \Phi_\star+\eta^{-1}D_d(x_\star,x_0).
\]
Thus the envelope descent requirement survives unchanged in the
composite setting.

A corresponding lower model can be written either with a primal nonsmooth
term or with a subgradient. Dualizing only the smooth part gives
\[
    S_r^\eta(x,\lambda)
    :=
    \langle \lambda,x\rangle-f^*(\lambda)
    +r(x)
    +\left(\mu+\eta^{-1}\right)D_d(x,x_0)
    \le
    \Phi^\eta(x),
\]
and therefore
\[
    \mathcal D_r^\eta(\lambda):=\min_x S_r^\eta(x,\lambda)
    \le
    \Phi^\eta_\star .
\]
Equivalently, for any \(s_y\in\partial r(y)\), convexity of \(r\) gives the
computable subgradient lower model
\[
\begin{aligned}
    \Phi_y^{\rm low}(z)
    :=
    &\;\Phi(y)
    +\left\langle
    \nabla f(y)+s_y+\mu\big(\nabla d(y)-\nabla d(x_0)\big),
    z-y
    \right\rangle
    +\mu D_d(z,y)
    \\
    \le
    &\;\Phi(z).
\end{aligned}
\]
The proximal primal step also remains well defined through
\[
    x^+
    \in
    \argmin_x
    \left\{
    \langle \nabla f(y),x-y\rangle+r(x)+L D_d(x,y)
    +\mu D_d(x,x_0)
    \right\}.
\]
This shows that the envelope and lower-model mechanisms extend naturally to
nonsmooth composite terms. What does not follow automatically is the accelerated
BLDF proof in the main text: the cancellation there uses a single continuous
gradient-generated dual direction, whereas the composite model introduces a
set-valued subgradient or a nonsmooth proximal residual. A full accelerated
composite analysis would therefore require additional control of this residual.

\clearpage
\section{Numerical Experiments}\label{app:numexp}

\subsection{Experimental protocol}
\label{app:numexp_protocol}

The code is the toolbox of \citet{hanzely2021accelerated}\footnote{\url{https://github.com/linxiaolx/accbpg}}
with an implementation of ABrA-GD added as one more method; the feasible domain and its boundary
handling, the datasets, the baseline methods, and the initialization are theirs, unchanged. The
objectives, reference functions and initial points below are stated from that source.

\paragraph{D-optimal design.}
For \(H\in\mathbb R^{m\times n}\) with \(m<n\),
\[
f(x)=-\log\det\big(H\operatorname{diag}(x)H^\top\big),
\qquad
\mathcal X=\Big\{x\ge0:\ \textstyle\sum_{i}x^{(i)}=1\Big\},
\]
and \(d\) is the Burg entropy \(d(x)=-\sum_i\log x^{(i)}\) restricted to that simplex. Here \(f\) is
\(1\)-smooth relative to \(d\) \citep{lu2018relatively}, so \(L=1\). The simplex constraint is the
affine equality of \cref{app:affine_restriction}. The restriction makes \(\dom d\) bounded, so
\(\dom d^*=\mathbb R^n\) and the virtual point \(q_{k+1}\) of
\cref{lem:bound_acc_strongly_convex} exists at every iterate on these instances. Every method starts
at the simplex center \(x_0=(1/n)\mathbf 1\). The instances are built from the LIBSVM regression
datasets abalone (\(n=4177\), \(m=8\)), bodyfat (\(252\), \(14\)), mpg (\(392\), \(7\)) and housing
(\(506\), \(13\)) \citep{chang2011libsvm}.

The relatively strongly convex instances are formed by the anchor term already in
\cref{eq:phi_objective_boxed}: \(\phi(x)=f(x)+\mu D_d(x,x_0)\), with the same \(f\), \(d\), and
\(x_0\) as the convex instance and \(\mu=10^{-4}\).

\paragraph{Poisson linear inverse problem.}
\(f(x)=D_{\mathrm{KL}}(b,Ax)\) on \(\mathcal X=\mathbb R^n_+\), with \(A\in\mathbb R^{m\times n}\)
drawn entrywise uniform on \([0,1]\) and column-normalized, and \(b=A\bar x+\text{noise}\) for a
sparse nonnegative \(\bar x\). The reference function is again the Burg entropy, now on the orthant,
and \(f\) is \(\|b\|_1\)-smooth relative to it \citep{bauschke2017descent}. 

\paragraph{Parameters.}
ABrA-GD starts every run at \(L_0=1\) and \(M_{-1}=L_0\) on both problems. This differs from
\citet{hanzely2021accelerated}, who supply the theoretical constant: \(L=1\) for D-optimal design and
\(L=\|b\|_1\) for the Poisson problem. The baselines are run through their own interface with the
constants that interface expects, as in \citet{hanzely2021accelerated}: BPG and ABPG receive \(L\),
and BPG-LS, ABPG-e and ABPG-g adjust a gain online starting from it. No relative strong convexity
constant is supplied to any baseline, because that interface takes no \(\mu\); on the \(\mu>0\)
instances the anchor term enters through \(\phi\) itself.

\subsection{Safeguards and ablations}

\paragraph{Restart strategies.}
As in adaptive ABPG \citep{hanzely2021accelerated}, we test two standard
restart heuristics as ablations. The first is a function-value restart,
triggered when the candidate objective value increases. The second is a
gradient restart, triggered when the accelerated direction becomes misaligned
with the current descent direction. After a restart, we keep the current best
primal point and reset the accelerated state by setting
\[
    z_k=x_k=x_k^{\mathrm{best}},
\]
resetting the dual averaging state, and continuing with the same adaptive
searches for \(L_{\mathrm{cur}}\) and \(M_k\).

These restart rules are heuristic and are not part of ABrA-GD's convergence
analysis. They are included only to test whether explicit momentum resets add
anything beyond the exit condition of \cref{algo:bregman_adaptive}. The
experiments below show that restarts do not provide a consistent advantage; in
several cases they increase the variability of the accepted acceleration
parameter \(M_k\) or reduce oracle-call efficiency.

\paragraph{Gradient-descent safeguard.}
The accelerated step may become inefficient when the accepted acceleration
parameter \(M_k\) is very large. To make the method never worse than Bregman
gradient descent, one can compare the certificate progress of the accelerated
step with the nonaccelerated dual-averaging update. This safeguard uses the true
relative smoothness constant \(L\) only as a theoretical comparison.
Define
\[
t_k^{\rm gd}
=
\frac{\mu+\eta_k^{-1}}{L+\eta_k^{-1}},
\qquad
\bar\eta_{k+1}^{-1}
=
(1-t_k^{\rm gd})\eta_k^{-1}.
\]
After the accelerated line search has produced a candidate
\[
    \eta_{k+1}^{-1}=M_k t_k^2-\mu,
\]
we accept it only if
\[
    \eta_{k+1}^{-1}\le \bar\eta_{k+1}^{-1}.
\]
Otherwise, we perform the certified nonaccelerated primal--dual step obtained
by setting
\[
    y_k=z_k,
    \qquad
    t_k=t_k^{\rm gd},
    \qquad
    \eta_{k+1}^{-1}=\bar\eta_{k+1}^{-1}.
\]
This fallback is exactly a Bregman gradient step in primal variables, but
written in the primal--dual variables so that the envelope certificate is
preserved. Consequently,
\[
\phi(x_k^{\rm best})-\phi^\star
\le
\eta_{k,\rm gd}^{-1}D_d(x^\star,x_0),
\]
so the safeguarded method has at least the convergence rate of Bregman gradient
descent while retaining accelerated steps whenever their certificate progress
is better.

\textbf{In all reported experiments, the safeguard was never triggered}: every accepted
ABrA-GD step already produced at least as much certificate progress as the
nonaccelerated update. Thus, the safeguard is included only to make the
worst-case comparison with Bregman gradient descent explicit and does not affect
the numerical results.

\subsection{Additional numerical experiments}

We report additional experiments on D-optimal design and Poisson linear inverse
problems. For D-optimal design, we test both the convex case and a relatively
strongly convex variant with \(\mu=10^{-4}\). For the Poisson inverse problem,
we test two variants, denoted Poisson L1 and Poisson L2.

Across the convex D-optimal design instances, ABrA-GD improves on nonaccelerated
Bregman gradient methods and matches the best ABPG variants over the plotted
range. In the relatively strongly convex setting, ABrA-GD's
log-residual decreases approximately linearly over the plotted range and
reaches a lower residual than the competing methods. On the Poisson problems,
ABrA-GD again improves on the nonaccelerated baselines, but which method
reaches the lowest residual depends more strongly on the instance and the
variant. The restart comparisons below show that explicit
restarts do not improve on the plain method in these instances.

\clearpage

\subsubsection{D-optimal design, convex, $\gamma=1$ for ABPG}

\begin{figure}[h]
    \centering\includegraphics[width=1\linewidth]{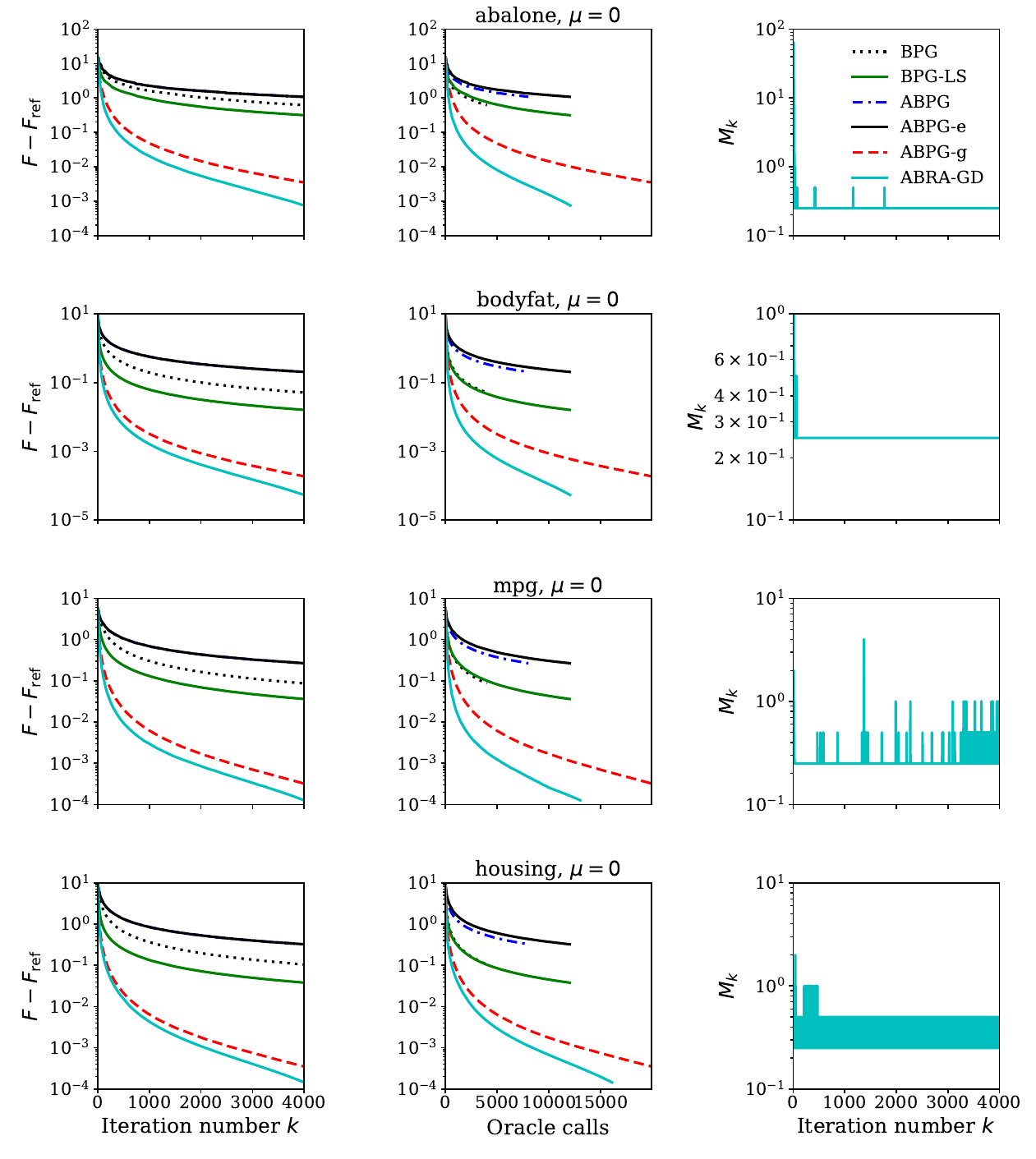}
    \caption{
    D-optimal design on the housing dataset with ABPG run at the
    triangle-scaling exponent \(\gamma=1\), one point in
    \citet{hanzely2021accelerated}'s own sweep over
    \(\gamma\in\{1.0,1.5,2.0,2.2\}\); they report that at \(\gamma=1\) ABPG
    converges at \(O(k^{-1})\) but is slower than BPG. They fix \(\gamma=2\)
    for every ABPG variant in their own comparisons, which is why we use
    \(\gamma=2\) in all other comparisons.
    }
    \label{fig:dopt_convex_bundle_gamma1}
\end{figure}

\clearpage
\subsubsection{D-optimal design, convex}

\begin{figure}[h]
    \centering
    \includegraphics[width=1\linewidth]{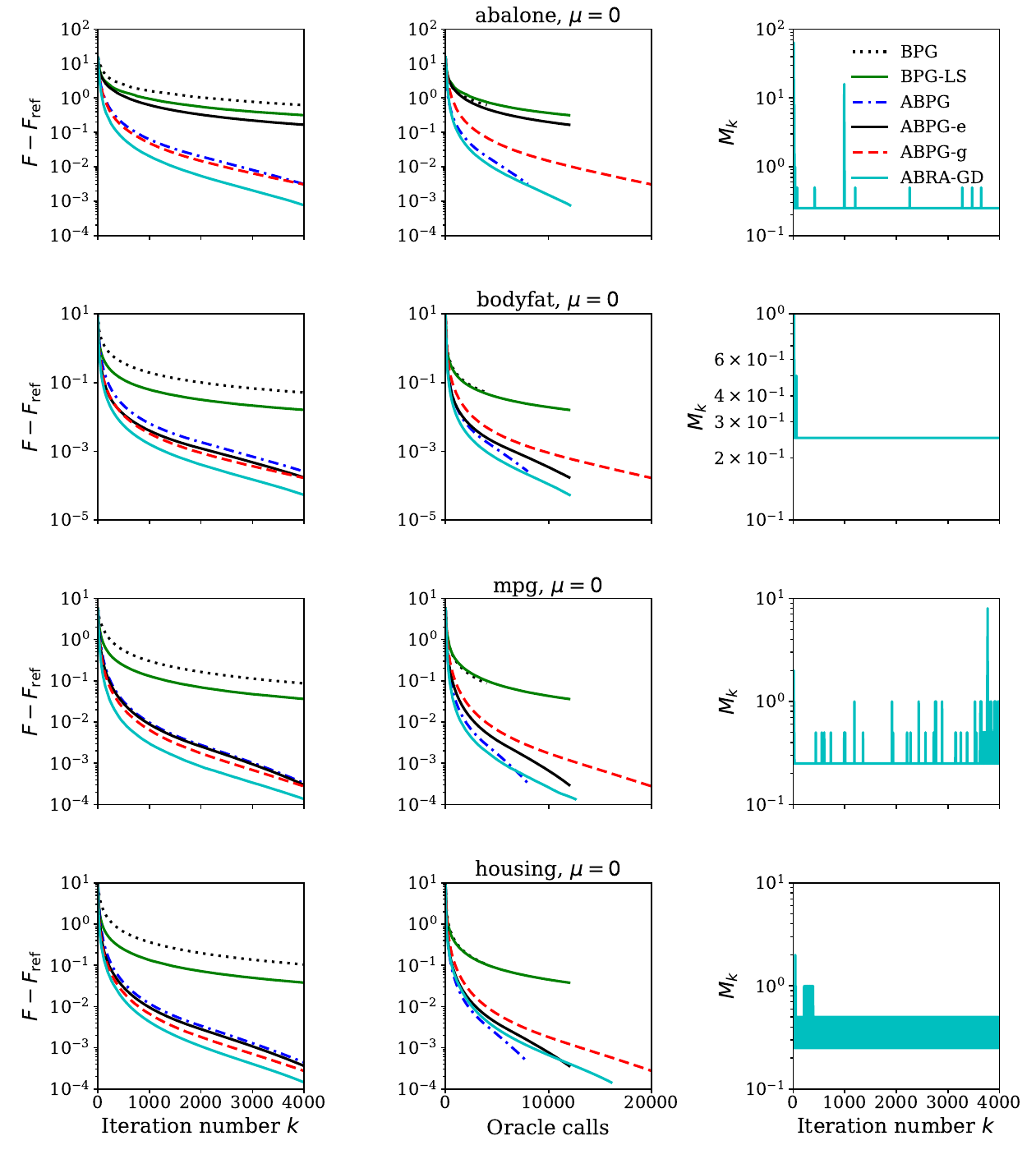}
    \caption{
    Convex D-optimal design experiments on the abalone, bodyfat, mpg, and
    housing datasets. For each dataset, we report the residual
    \(F-F_\star\) versus iterations, the same residual versus oracle calls,
    and the accepted values of \(M_k\) for ABrA-GD. ABrA-GD improves
    over BPG and BPG-LS on all datasets and matches the best accelerated ABPG
    variants over the plotted range, while adapting \(M_k\) locally during the
    run.
    }
    \label{fig:dopt_convex_bundle}
\end{figure}

\clearpage
\subsubsection{D-optimal design, strongly convex}

\begin{figure}[h]
    \centering
    \includegraphics[width=1\linewidth]{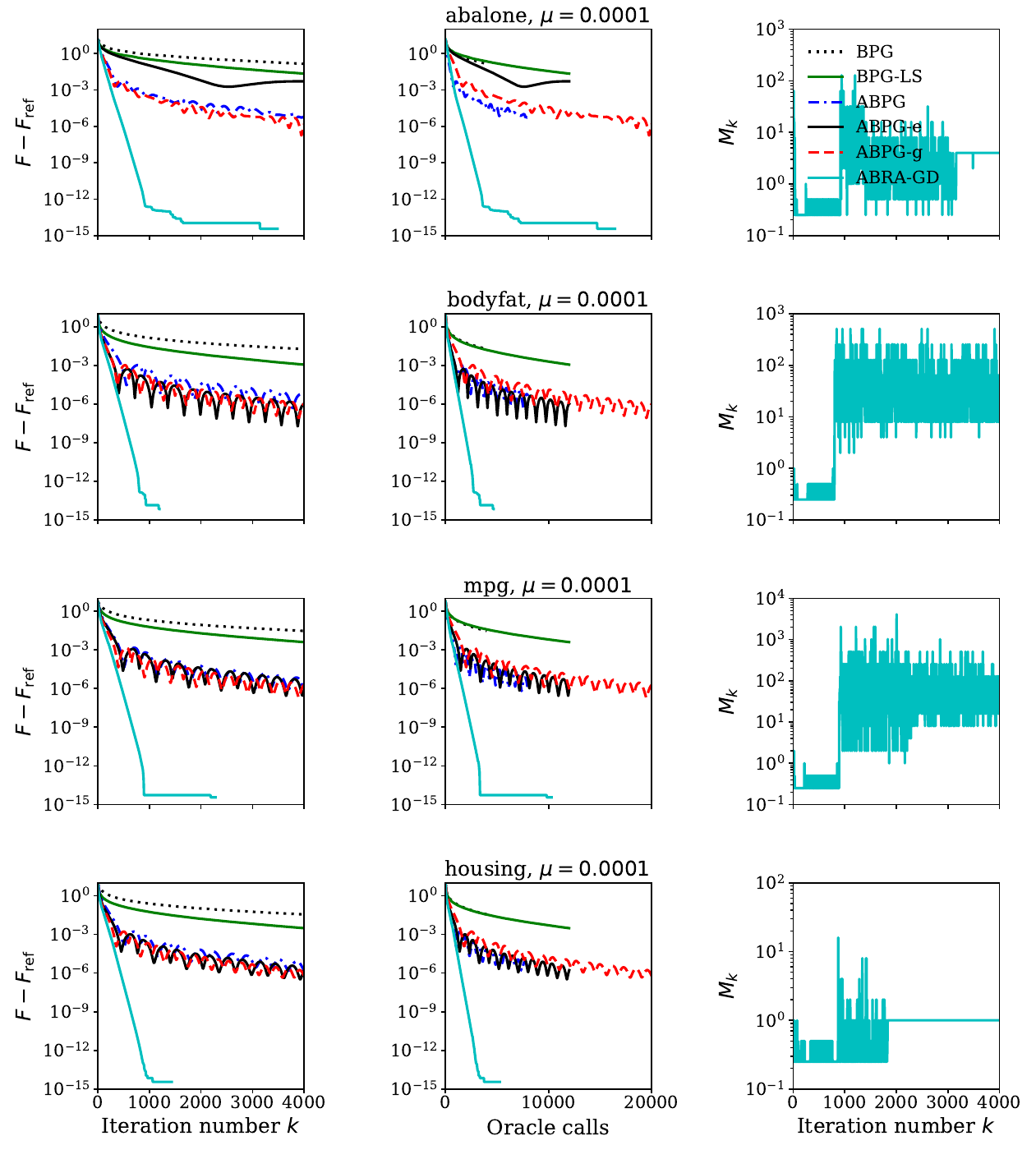}
    \caption{
    Relatively strongly convex D-optimal design experiments with
    \(\mu=10^{-4}\).
    The left and middle columns show convergence in iterations
    and oracle calls, while the right column reports the accepted values of
    \(M_k\). In this regime, ABrA-GD's log-residual decreases approximately
    linearly over the plotted range and reaches a lower residual than BPG,
    BPG-LS, and the ABPG variants. The accepted values of \(M_k\) can
    fluctuate; \(\phi^{\rm best}_k\) is non-increasing by construction
    (\cref{algo:bregman_adaptive}) regardless.
    }
    \label{fig:dopt_strcvx_bundle}
\end{figure}

\clearpage
\subsubsection{Poisson problem}

\begin{figure}[h]
    \centering
    \includegraphics[width=1\linewidth]{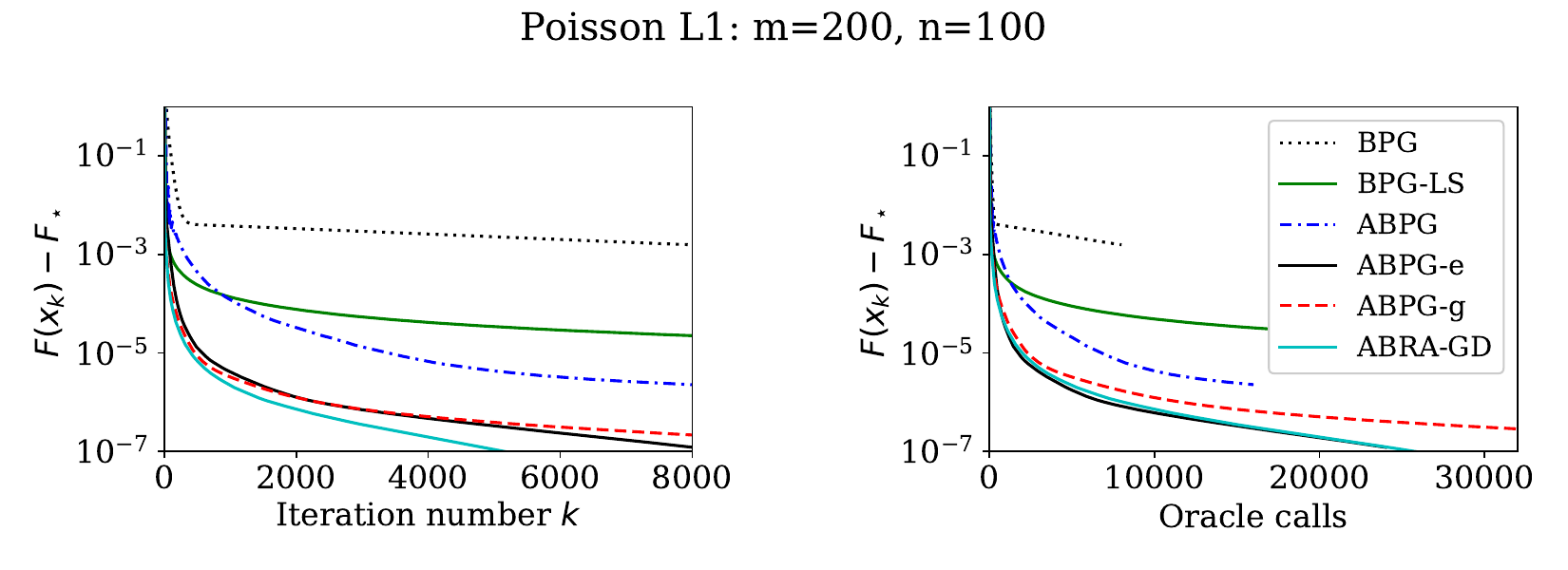}
    \caption{
    Poisson linear inverse problem, L1 variant
    \((m=200,n=100)\). ABrA-GD improves on the nonaccelerated Bregman methods
    and matches the best ABPG variants over the plotted range, in both
    iteration count and oracle-call count.
    }
    \label{fig:poisson_l1}
\end{figure}

\begin{figure}[h]
    \centering
    \includegraphics[width=1\linewidth]{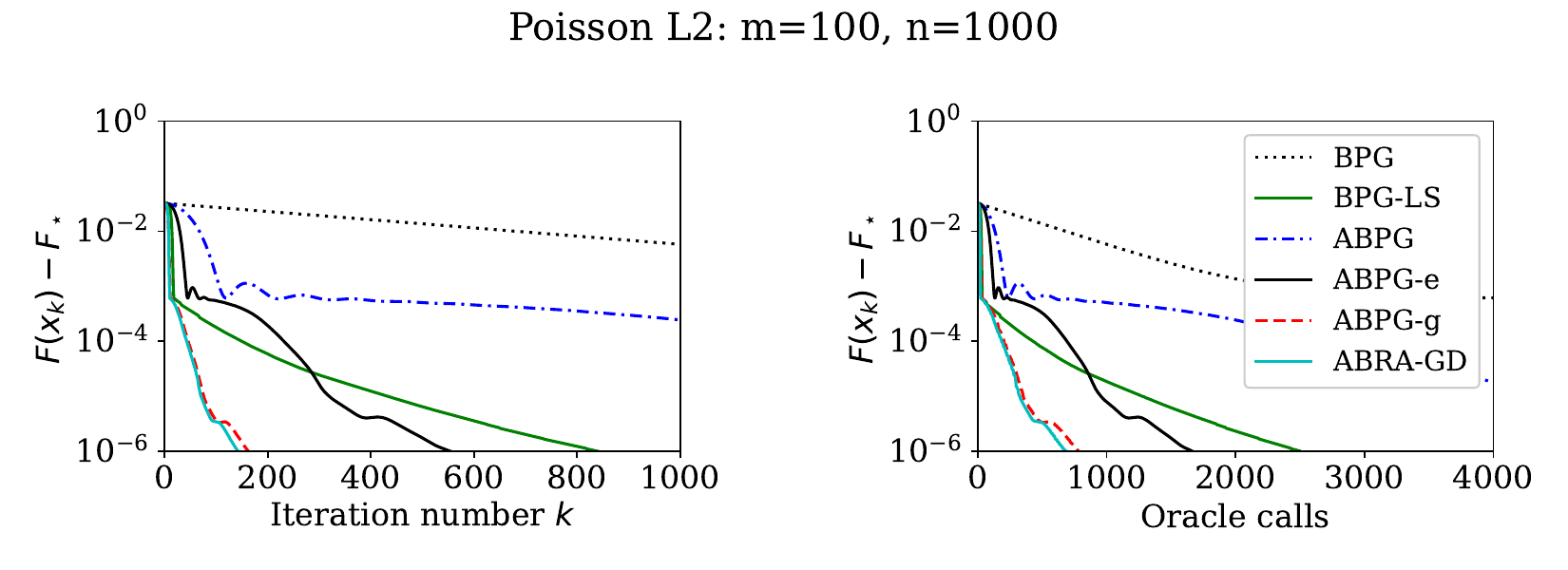}
    \caption{
    Poisson linear inverse problem, L2 variant
    \((m=100,n=1000)\). The accelerated methods reach a lower residual than
    BPG and BPG-LS. ABrA-GD tracks the best accelerated baseline over the
    plotted range; which method reaches the lowest final residual is more
    sensitive to the instance and the adaptive strategy than
    in the D-optimal design experiments.
    }
    \label{fig:poisson_l2}
\end{figure}

\clearpage
\subsubsection{Restart comparison: D-optimal design, convex}

\begin{figure}[h]
    \centering
    \includegraphics[width=\linewidth]{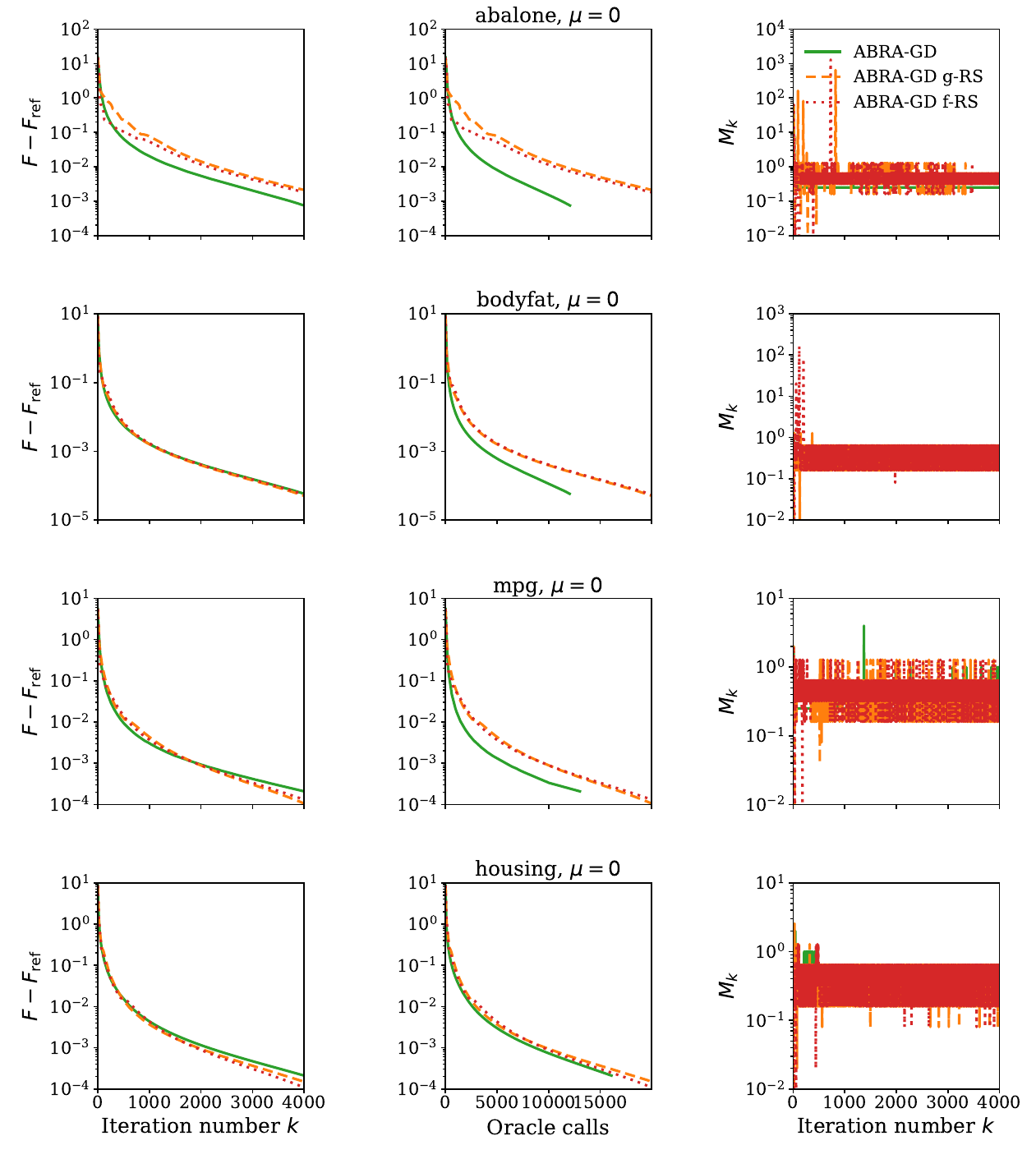}
    \caption{
    Restart comparison for convex D-optimal design. We compare ABrA-GD without
    restart, with gradient restart (g-RS), and with function-value restart
    (f-RS). Restarts do not improve performance consistently: the plain ABrA-GD
    variant is usually comparable or better in oracle calls, while the restarted
    variants can introduce additional variability in \(M_k\).
    }
    \label{fig:dopt_convex_restart}
\end{figure}

\clearpage
\subsubsection{Restart comparison: D-optimal design, strongly convex}

\begin{figure}[h]
    \centering
    \includegraphics[width=\linewidth]{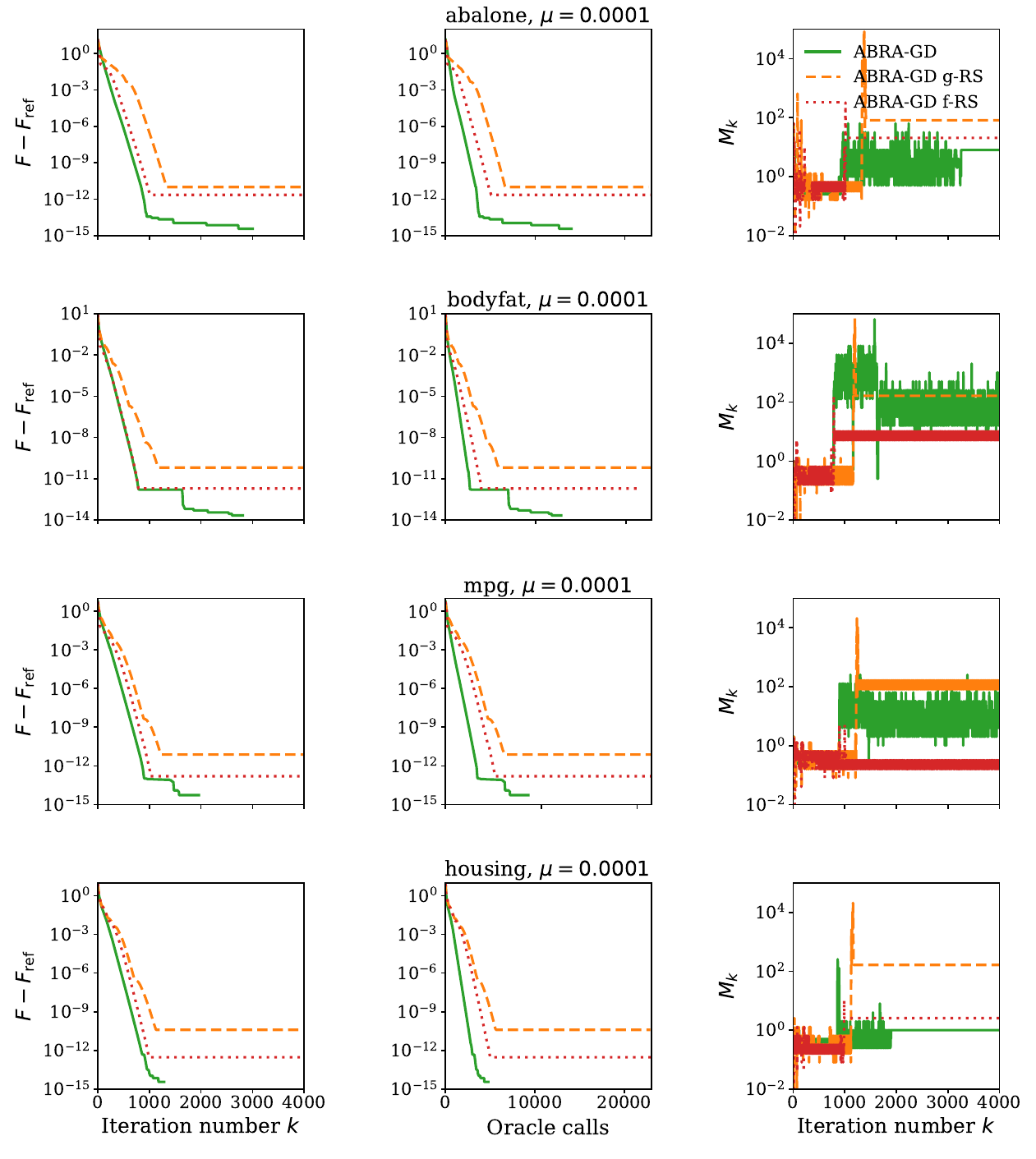}
    \caption{
    Restart comparison for relatively strongly convex D-optimal design with
    \(\mu=10^{-4}\). The non-restarted ABrA-GD variant typically reaches the
    lowest final residual across the datasets. The restarted variants often
    plateau earlier or require larger accepted values of \(M_k\).
    }
    \label{fig:dopt_strcvx_restart}
\end{figure}

\clearpage
\subsubsection{Restart comparison: Poisson problem}

\begin{figure}[h]
    \centering
    \includegraphics[width=\linewidth]{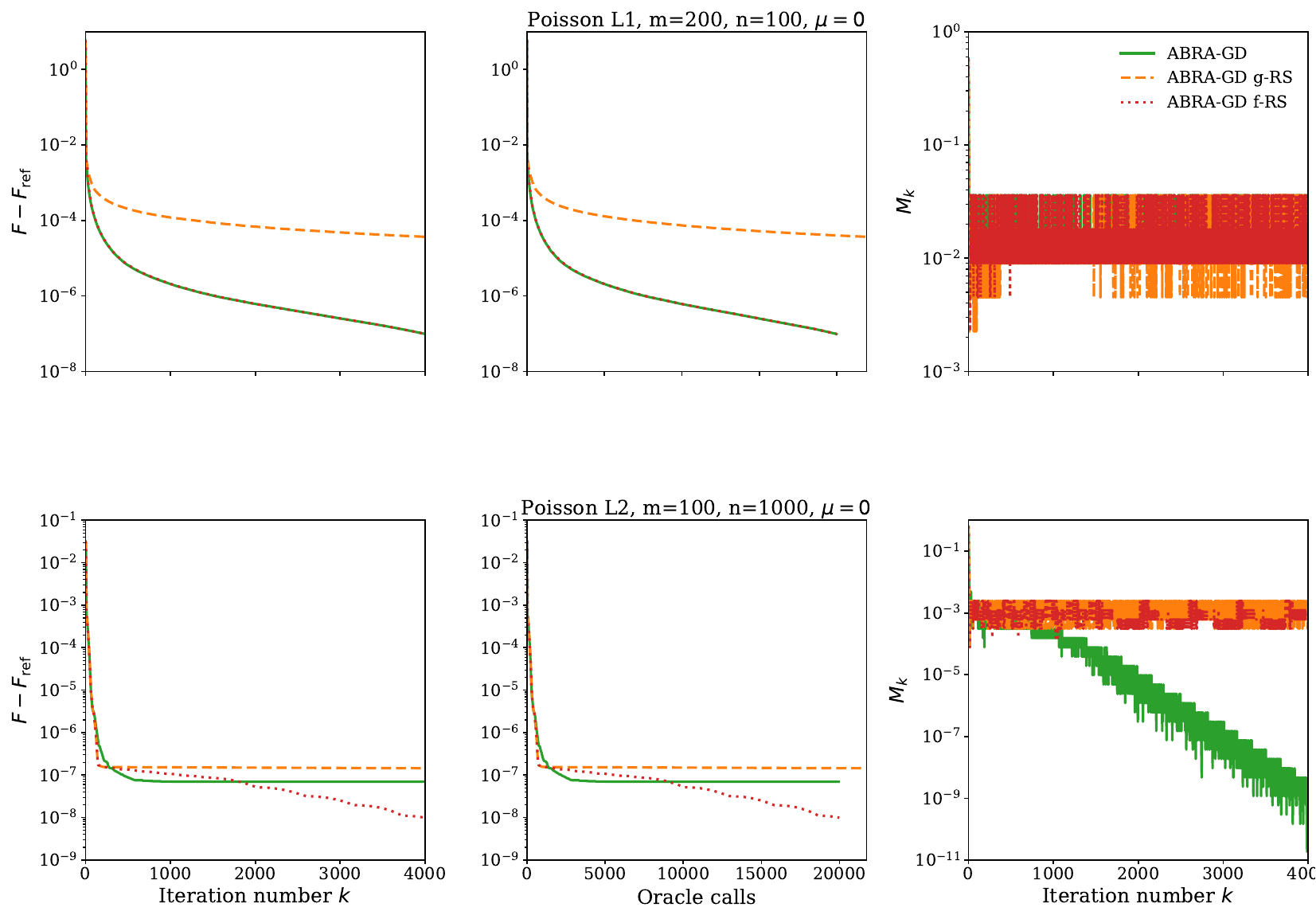}
    \caption{
    Restart comparison for the Poisson linear inverse problems. Restart is
    neutral or worse on both instances: the plain ABrA-GD update is not
    improved upon by either restarted variant on either instance. The
    restarted variants both stall on the Poisson L2 instance.
    }
    \label{fig:poisson_restart}
\end{figure}

\clearpage

\end{document}